\newcommand{\Et}{\text{E}}
\newcommand{\Ht}{\text{H}}
\newcommand{\ts}{\text{Spec}}
\newcommand{\hit}{\text{ht}}
\newcommand{\grd}{\text{grade}}
\newcommand{\As}{\text{Ass}}
\newcommand{\At}{\text{Att}}
\newcommand{\as}{\text{ass}}
\newcommand{\dep}{\text{Depth}}
\newcommand{\dime}{\text{dim}}
\newcommand{\map}{\mathfrak{p}}
\newcommand{\mam}{\mathfrak{m}}
\newcommand{\maq}{\mathfrak{q}}
\newcommand{\maa}{\mathfrak{a}}
\newcommand{\mab}{\mathfrak{b}}
\newcommand{\mac}{\mathfrak{c}}
\newcommand{\man}{\mathfrak{n}}
\newcommand{\mn}{\mathbb{N}}
\newcommand{\Supp}{\text{Supp}}
\newcommand{\Var}{\text{Var}}
\newcommand{\Max}{\text{Max}}
\newcommand{\pd}{\text{Pd}}
\newcommand{\ext}{\text{Ext}}
\newcommand{\homm}{\text{Hom}}
\newcommand{\dsuml}{\bigoplus\limits}
\newcommand{\dsum}{\bigoplus}
\newcommand{\ten}{\bigotimes}
\newcommand{\tenr}{\bigotimes_R}
\newcommand{\insl}{\bigcap\limits}
\newcommand{\ins}{\bigcap}
\newcommand{\uni}{\bigcup}
\newcommand{\unil}{\bigcup\limits}
\newcommand{\Rc}{\widehat{R}}
\newcommand{\Mc}{\widehat{M}}
\DeclareMathAlphabet{\mathcalligra}{T1}{calligra}{g}{f}
\begin{document}
   
    \date{}
    \title{\Large \textbf{A Study of Quasi-Gorenstein Rings}}
    \markboth{\small \textsc{A Study of Quasi-Gorenstein Rings}}{\small  \textsc{Ehsan Tavanfar and Massoud Tousi}}         
      \author{\normalsize \textsc{EHSAN TAVANFAR} \normalsize and \textsc{MASSOUD TOUSI}}
    \maketitle          
    \begin{abstract}
       \textsc{Abstract}. In this paper  several quasi-Gorenstein counterparts to some known properties of Gorenstein rings are given. We, furthermore,  give an explicit description of the attached prime ideals of certain   local cohomology modules.

    \end{abstract}     
    
    {\let\thefootnote\relax\footnotetext{2010 Mathematics Subject Classification. 13H10, 13D45.}}
    {\let\thefootnote\relax\footnotetext{Key words and phrases. Canonical module, $G$-dimension,  Gorenstein rings, limit closure, local cohomology, quasi-Gorenstein rings.}}
    {\let\thefootnote\relax\footnotetext{This research was in part supported by a grant from IPM (No. 93130211).}}
    
    \newtheorem{thm}{Theorem}[section]
    \theoremstyle{Definition}    
    \newtheorem{defi}[thm]{Definition}
    \theoremstyle{Definition and Remark}
    \newtheorem{defi-rem}[thm]{Definition and Remark}
    \newtheorem{defis-rem}[thm]{Definitions and Remark}
    \newtheorem{defis-rems}[thm]{Definitions and Remarks}
    \newtheorem{defi-Nots}[thm]{Definition and Notations}
    \newtheorem{defi-Not}[thm]{Definition and Notation}
    \theoremstyle{Lemma}
    \newtheorem{lem}[thm]{Lemma}
    \theoremstyle{remark}
    \newtheorem{rem}[thm]{Remark}
    \theoremstyle{Corollary}
    \newtheorem{cor}[thm]{Corollary}
    \newtheorem{exam}[thm]{Example}
    \newtheorem{counterexam}[thm]{Counterexample}
    \newtheorem{prop}[thm]{Proposition}
    \newtheorem{ques}[thm]{Question}
    \newtheorem{discuss}[thm]{Discussion}
    \newtheorem{defi-rem-nota}[thm]{Definitions, Notations and Remarks}
    \newtheorem{rmi}[thm]{Reminder}
    \normalsize  
    
    \section{Introduction}
    
    Throughout this article $(R,\mathfrak{m})$ is a commutative Noetherian local ring of dimension $d$ with identity where $\mathfrak{m}$  denotes the unique maximal ideal of $R$. Furthermore, $M$ always stands for a finitely generated $d'$-dimensional $R$-module.
    
       Following \cite{Aoyama}, we say that $R$ is a quasi-Gorenstein ring precisely when $\Ht^d_\mam(R)\cong \Et(R/\mam)$ or, equivalently,  $R$ has a canonical module which is a rank one free module. In  the geometric vein, a normal projective variety $X$ is quasi-Gorenstein if the canonical divisor  $K_X$ of $X$ is Cartier. Indeed, a Cohen-Macaulay quasi-Gorenstein ring is Gorenstein. According to  \cite[]{Hassanzadeh} (c.f. \cite{Schenzel}), roughly speaking, quasi-Gorenstein rings arise  from the theory of linkage. That is, loosely speaking,  they are residue rings of a Gorenstein ring modulo an ideal which is linked to an unmixed almost complete intersection. Hence there are so many of them.  From another perspective, if $R$  has a canonical module  and $\Rc$ satisfies the $S_2$ condition  then the trivial extension of $R$ by its canonical module  is quasi-Gorenstein (see \cite{Aoyama}). This, again, shows the ubiquity of quasi-Gorenstein rings. When $R$ is a complete normal domain with a canonical ideal $\omega_R$, the first author of the present paper, in \cite[Remark 3.2.]{TavanfarReduction } and \cite[Theorem 3.5.(i)]{TavanfarReduction},  endowed $R\dsum \omega_R$ with another $R$-algebra structure, by which it is a quasi-Gorenstein domain  (Note that the trivial extension is never domain).  It is also worthwhile to mention that by, \cite[Lemma (2.4)]{Foxby}, the class of quasi-Gorenstein rings contains the class of unique factorization domains with canonical module. Such  non-Cohen-Macaulay unique factorization domains have their origin in the invariant theory, see \cite{BertinAnneaux}. Rees Algebras provides us  with another important source of quasi-Gorenstein rings (see, e.g., \cite{JohnsonUrlich} and \cite{HeiznerUrlich}).  In the light of \cite[Theorem 6.1.]{SinghCyclic},   there are (even) isolated rational singularity  $Q$-Gorenstein rings  with non-Cohen-Macaulay quasi-Gorenstein  cyclic cover. In \cite[Theorem, page 336]{IshiiQuasi-Gorenstein} the author gives a nice description of non-Cohen-Macaulay quasi-Gorenstein Fano $3$-folds\footnote{This means a $3$-dimensional normal projective variety $X$ such that its anti-canonical divisor, $-K_X$, is an ample Cartier divisor.} (over $\mathbb{C}$) with isolated non-rational loci, as the projective cone defined by an ample invertible sheaf $\mathcal{L}$ on an Abelian surface. In \cite{Hermann} the authors  give examples of  quasi-Gorenstein  Buchsbaum affine semigroup rings of any admissible depth. It is noteworthy to point out that affine semigroup rings  of \cite{Hermann} are not the only place where the non-Cohen-Macaulay quasi-Gorenstein Buchsbaum rings come from. Another source of such rings is the Segre product of two hypersurfaces of $a$-invariant zero. For example, over an algebraically closed field $k$, the Segre product ring, $$R:=\big(k[a,b,c]/(a^3+b^3+c^3)\big)\#\big(k[x,y,z,w]/(x^4+y^4+z^4+w^4)\big),$$ is a quasi-Gorenstein Buchsbaum normal domain of dimension $4$ and depth $2$ (see, \cite[Theorem (4.1.5)]{GotoSegre}, \cite[Proposition (4.2.2)]{GotoSegre}, \cite[Theorem (4.3.1)]{GotoSegre} and \cite[Theorem A]{Miazaki}).   We end this part of the introduction by stating that, in the light of Kunz's \cite[Proposition 1.1]{KunzAlmost}, the class of almost complete intersections and the class of quasi-Gorenstein rings are disjoint.

    A vast amount of research has been devoted to  studying the class of  Gorenstein rings. Compared with Gorenstein rings, our understanding of the behaviour and properties of quasi-Gorenstein rings is  limited. In this paper we aim to increase our knowledge about  quasi-Gorenstein rings. For example, in Section 3 we deal with  some natural questions concerning the interaction between quasi-Gorensteinness and regular elements. Namely, we prove the following fact:
     
      \begin{thm}\label{IntroRxR} (See Proposition \ref{quotient_zero_ht} and Corollary \ref{S_2Characterization}(i)) If $R$ is quasi-Gorenstein  and $x$ is a regular element of $R$, then $R/xR$ is   quasi-Gorenstein  if and only if $R/xR$ satisfies the $S_2$ condition and this is equivalent to the assertion that $x\notin\unil_{\map\in\At_R\big(\Ht^{d-1}_\mam(R)\big)}\map$.\end{thm}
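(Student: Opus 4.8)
The plan is to reduce everything to a statement about the single module $\Ht^{d-1}_\mam(R)$ and its Matlis dual, and to run a cycle of implications through the auxiliary vanishing $\Ht^{d-1}_\mam(R)/x\Ht^{d-1}_\mam(R)=0$. Write $(-)^\vee:=\homm_R(-,\Et(R/\mam))$. First I would pass to $\widehat R$: quasi-Gorensteinness of $R$ and of $R/xR$, the $S_2$ condition on $R/xR$, and the condition $x\notin\unil_{\map\in\At_R(\Ht^{d-1}_\mam(R))}\map$ are all unchanged (the last because $\At_R$ is the contraction of $\At_{\widehat R}$ and completion commutes with forming $R/xR$ and with the relevant $\homm$'s), so we may assume $R=P/J$ with $(P,\man)$ regular local. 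Then $R$ is catenary; and since $R\cong\omega_R$ and the canonical module always satisfies $S_2$, $R$ is $S_2$ and equidimensional, hence $R/xR$ is equidimensional of dimension $d-1$ and, by the cohomological characterization of $S_2$, $\dim_R\Ht^{d-1}_\mam(R)\le d-3$.

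Next, set $A:=\Ht^{d-1}_\mam(R)$ and $K:=A^\vee$ (a finitely generated $R$-module, the $(d-1)$st deficiency module). The long exact local cohomology sequence of $0\to R\xrightarrow{x}R\to R/xR\to 0$, together with $\Ht^d_\mam(R/xR)=0=\Ht^{d+1}_\mam(R)$ and $\Ht^d_\mam(R)\cong\Et(R/\mam)$, yields
$$0\to A/xA\to\Ht^{d-1}_\mam(R/xR)\to\big(0:_{\Et(R/\mam)}x\big)\to 0,$$
with $\big(0:_{\Et(R/\mam)}x\big)\cong\Et_{R/xR}(R/\mam)$. Applying $(-)^\vee$ to this sequence and invoking local duality (together with $\omega_R\cong R$, $\Ht^{d-1}_\mam(R/xR)^\vee=\omega_{R/xR}$, $\Et_{R/xR}(R/\mam)^\vee\cong R/xR$) gives the second fundamental sequence
$$0\to R/xR\to\omega_{R/xR}\to(0:_Kx)\to 0.\qquad(\heartsuit)$$

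Now I would prove: (i) $A/xA=0\Longleftrightarrow(0:_Kx)=0\Longleftrightarrow x\notin\unil_{\map\in\At_R(A)}\map$ — the first by Matlis duality, the second by the Nakayama-type statement for Artinian modules (for Artinian $A$ one has $xA=A$ iff $x$ lies outside every attached prime); (ii) $A/xA=0\Longrightarrow R/xR$ is quasi-Gorenstein — immediate from the first displayed sequence, which then reads $\Ht^{d-1}_\mam(R/xR)\cong\Et_{R/xR}(R/\mam)$; (iii) $R/xR$ quasi-Gorenstein $\Longrightarrow R/xR$ is $S_2$ — because then $R/xR\cong\omega_{R/xR}$ and canonical modules are $S_2$; and (iv) $R/xR$ being $S_2\Longrightarrow(0:_Kx)=0$. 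Items (i)--(iv) together form a cycle that gives all the asserted equivalences.

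The only substantial point is (iv), where I would use $(\heartsuit)$: put $C:=(0:_Kx)$, a finitely generated $R/xR$-module with $\dim_{R/xR}C\le\dim_RK\le d-3$, and suppose $C\ne 0$. Pick $\map\in\As_{R/xR}(C)$. Since $R/xR$ is equidimensional and catenary of dimension $d-1$, $\dim(R/xR)_\map=(d-1)-\dim\big((R/xR)/\map\big)\ge(d-1)-(d-3)=2$; hence $\dep\big((R/xR)_\map\big)\ge 2$ (as $R/xR$ is $S_2$), and $\dep\big((\omega_{R/xR})_\map\big)\ge 2$ (as $\omega_{R/xR}$ is $S_2$ and, $R/xR$ being equidimensional, has full support, so its dimension at $\map$ is again $\ge 2$). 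Localizing $(\heartsuit)$ at $\map$ and applying $\homm_{(R/xR)_\map}(\kappa(\map),-)$, the vanishing of $\ext^i_{(R/xR)_\map}(\kappa(\map),(R/xR)_\map)$ for $i\le 1$ forces $\homm(\kappa(\map),(\omega_{R/xR})_\map)\cong\homm(\kappa(\map),C_\map)$; the right side is nonzero (because $\dep C_\map=0$) while the left side is zero (because $\dep\big((\omega_{R/xR})_\map\big)\ge 1$), a contradiction, so $C=0$. Conceptually this is just the fact that an $S_2$ submodule of an $S_2$ module with which it agrees in codimension $\le 1$ must coincide with it. I expect the main obstacle to be precisely this step — choosing the right associated prime of the cokernel and extracting the depth estimates — together with the preliminary bookkeeping (the reduction to the complete case and the inequality $\dim\Ht^{d-1}_\mam(R)\le d-3$); once $(\heartsuit)$ is available the remaining implications are routine.
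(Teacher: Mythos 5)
Your proof is correct, but it is organized quite differently from the paper's. The paper splits the theorem into two separately proved equivalences: Proposition \ref{quotient_zero_ht} shows that $R/xR$ is quasi-Gorenstein iff $x\notin\bigcup_{\map\in\At_R(\Ht^{d-1}_\mam(R))}\map$, handling the hard direction by realizing the surjection $\Ht^{d-1}_\mam(R/xR)\to 0:_{\Ht^d_\mam(R)}x$ as multiplication by some $s\in\widehat R$, dualizing to $0\to R/xR\xrightarrow{s}R/xR\to\big(\Ht^{d-1}_\mam(R)/x\Ht^{d-1}_\mam(R)\big)^\vee\to 0$, and deriving a contradiction from the resulting equality $\hit_R(\map)=2$ at an offending attached prime; Corollary \ref{S_2Characterization}(i) then obtains the equivalence with the $S_2$ condition by appealing to Theorem \ref{CanonicalModuleRegularElement}, which rests on the full description of $\At_R\big(\Ht^{d-1}_\mam(M)\big)$ in Theorem \ref{AttPrimeSnLocalCohomology}(i) and hence on Lemma \ref{AttachPrimeDepthCanonicalModuleLemma}. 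You instead close a single cycle (avoidance $\Rightarrow$ quasi-Gorenstein $\Rightarrow$ $S_2$ $\Rightarrow$ avoidance) whose only substantive arrow is the last, and you prove that arrow directly from your sequence $(\heartsuit)$ by the standard ``two $S_2$ modules agreeing in codimension one must coincide'' depth count, using the bound $\dim(0:_Kx)\le d-3$ --- which encodes the same codimension-$\ge 3$ phenomenon as the condition $\hit_{\omega_M}\map\ge 3$ in Theorem \ref{AttPrimeSnLocalCohomology}(i). Your route is shorter and avoids both the attached-prime machinery and the $s$-endomorphism argument; what the paper's heavier route buys is the explicit attached-prime description, which it reuses elsewhere (Corollary \ref{S_2Characterization}(ii), Theorem \ref{CanonicalModuleRegularElement}).

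One step you assert without justification is that the $S_2$ condition on $R/xR$ ``is unchanged'' by completion. The direction you actually need (namely, $R/xR$ satisfies $S_2$ implies $\widehat R/x\widehat R$ satisfies $S_2$) is the non-automatic one: it requires the formal fibres of $R/xR$ to satisfy $S_2$. The paper devotes the opening lines of the proof of Corollary \ref{S_2Characterization}(i) to exactly this point, deducing $S_2$ formal fibres from the quasi-Gorensteinness of $R$; you need to insert that argument (or some substitute), since otherwise your step (iv) only establishes the implication ``$S_2\Rightarrow$ avoidance'' for the completed ring rather than for $R/xR$ itself.
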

      
      Accordingly, the main obstruction here is the failure of $S_2$-condition. Hence, the next natural question is that whether the $S_2$-ification of $R/xR$ is, always, quasi-Gorenstein provided $R$ is quasi-Gorenstein and $R/xR$ has a canonical module? We settle  this question affirmatively.
      
      \begin{thm} \label{IntroS(RxR)} (See Corollary \ref{S_2Characterization}(ii)) If $R$ is quasi-Gorenstein and $x$ is a regular element of $R$ such that $R/xR$ has a canonical module, then the $S_2$-ification of $R/xR$ is quasi-Gorenstein.\end{thm}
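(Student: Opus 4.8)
The idea is to describe the canonical module $\omega_{R/xR}$ by an explicit short exact sequence forced by quasi-Gorensteinness, to read off from it that its ``extra part'' has codimension $\ge 2$, and then to compute the $S_2$-ification $\widetilde{R/xR}:=\homm_{R/xR}\!\big(\omega_{R/xR},\omega_{R/xR}\big)$ and its own canonical module. (When this extra part vanishes, $R/xR$ is already quasi-Gorenstein by Theorem \ref{IntroRxR}, so the content lies in the general situation.) First I would pass to the completion: $\Rc$ is again quasi-Gorenstein, $x$ stays regular, $\Rc/x\Rc$ automatically has a canonical module, one has $\omega_{\Rc/x\Rc}\cong\widehat{\omega_{R/xR}}$ and $\homm$ commutes with completion, and an isomorphism of finitely generated modules may be checked after completing; the hypothesis that $R/xR$ has a canonical module is what makes $\widetilde{R/xR}$ and this descent meaningful. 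So we may assume $R$ is a homomorphic image of a Gorenstein local ring $S$ with $\dime S=n$.

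Applying local cohomology to $0\to R\xrightarrow{x}R\to R/xR\to 0$ and using that $R$ is quasi-Gorenstein — so $\Ht^d_\mam(R)\cong\Et(R/\mam)$, multiplication by $x$ on $\Et(R/\mam)$ is onto, and $\Ht^d_\mam(R/xR)=0$ — yields
\[
0\to \Ht^{d-1}_\mam(R)/x\Ht^{d-1}_\mam(R)\to \Ht^{d-1}_\mam(R/xR)\to \big(0:_{\Et(R/\mam)}x\big)\to 0 .
\]
Dualizing with $\homm_R(-,\Et(R/\mam))$ and applying local duality over $S$ converts this into
\[
0\to R/xR\to \omega_{R/xR}\to N\to 0,\qquad N:=(0:_{D}x),\quad D:=\ext^{\,n-d+1}_S(R,S)\cong\Ht^{d-1}_\mam(R)^{\vee},
\]
so that $\Supp N\subseteq\Supp D$, whose minimal members are the attached primes $\At_R\!\big(\Ht^{d-1}_\mam(R)\big)$ appearing in Theorem \ref{IntroRxR}.

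The key step — which I expect to be the main obstacle — is the bound $\dime N\le d-3$. Since $R\cong\omega_R$ and a canonical module always satisfies Serre's condition $S_2$ (see \cite{Aoyama}), $R$, and hence $R/xR$, is equidimensional and catenary. Localizing $D\cong\Ht^{d-1}_\mam(R)^{\vee}$ at a prime $\map$ of $R$ and applying local duality over the corresponding Gorenstein localization of $S$ gives $D_\map\cong\Ht^{\dime R_\map-1}_{\map R_\map}(R_\map)^{\vee}$; by Grothendieck vanishing together with $\dep(R_\map)\ge\minn(2,\dime R_\map)$, this is nonzero only if $\dime R_\map\ge 3$. Hence $\dime D\le d-3$, and therefore $\dime N\le d-3<\dime(R/xR)$.

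For the conclusion I would feed $0\to R/xR\to \omega_{R/xR}\to N\to 0$ into $\homm_{R/xR}(-,\omega_{R/xR})$. The module $\omega_{R/xR}$ satisfies $S_2$ over $R/xR$, and its associated primes are exactly the minimal primes of $R/xR$; since every prime in $\Supp N$ has height $\ge 2$ in the equidimensional catenary ring $R/xR$, we obtain $\grd\!\big(\text{Ann}(N),\omega_{R/xR}\big)\ge 2$, whence $\homm_{R/xR}(N,\omega_{R/xR})=\ext^1_{R/xR}(N,\omega_{R/xR})=0$. The long exact sequence then gives $\widetilde{R/xR}=\homm_{R/xR}(\omega_{R/xR},\omega_{R/xR})\cong\homm_{R/xR}(R/xR,\omega_{R/xR})=\omega_{R/xR}$ as $R/xR$-modules, and one checks this is an isomorphism of $\widetilde{R/xR}$-modules. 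As $R/xR$ is unmixed (again by $S_2$-ness of $R$), $\widetilde{R/xR}$ is indeed the $S_2$-ification (see \cite{Aoyama}, \cite{Schenzel}), it is module-finite over $R/xR$ of the same dimension, and so $\omega_{\widetilde{R/xR}}\cong\homm_{R/xR}\!\big(\widetilde{R/xR},\omega_{R/xR}\big)\cong\homm_{R/xR}(\omega_{R/xR},\omega_{R/xR})=\widetilde{R/xR}$. Hence $\widetilde{R/xR}$ is quasi-Gorenstein.
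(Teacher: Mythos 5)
Your argument is correct, and although it shares the paper's skeleton --- complete, dualize $0\to\Ht^{d-1}_\mam(R)/x\Ht^{d-1}_\mam(R)\to\Ht^{d-1}_\mam(R/xR)\to 0:_{\Ht^d_\mam(R)}x\to 0$ into $0\to R/xR\to\omega_{R/xR}\to N\to 0$, and bound the cokernel in codimension $\ge 2$ of $R/xR$ via the local description $N_\map\subseteq\big(\Ht^{\dime(R_\map)-1}_{\map R_\map}(R_\map)\big)^\vee$ and the $S_2$-property of $R$ (this is exactly Remark \ref{FirstRemark}(b)-(c)) --- your endgame is genuinely different. The paper never identifies the $S_2$-ification with $\omega_{R/xR}$; it introduces a second exact sequence $0\to R/xR\to \homm_{R/xR}(\omega_{R/xR},\omega_{R/xR})\to C'\to 0$, bounds $\dime C'\le d-3$ by the Aoyama--Goto criterion for the $S_2$-ification map to be an isomorphism in codimension $\le 1$, and then chains the two isomorphisms $\Ht^{d-1}_\mam\big(\homm(\omega_{R/xR},\omega_{R/xR})\big)\cong\Ht^{d-1}_\mam(R/xR)\cong\Ht^{d-1}_\mam(\omega_{R/xR})$ before matching the result with $\dsum_{\man}\Et(S/\man)$. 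You instead upgrade the dimension bound on $N$ to the grade bound $\grd\big(0:_{R/xR}N,\omega_{R/xR}\big)\ge 2$ (legitimate: $\omega_{R/xR}$ is $S_2$ and $\hit_{\omega_{R/xR}}$ dominates $\hit_{R/xR}$ by Lemma \ref{CanonicalModuleAndModuleHeghtInequality}(iii)), kill $\homm_{R/xR}(N,\omega_{R/xR})$ and $\ext^1_{R/xR}(N,\omega_{R/xR})$, and read off $\homm_{R/xR}(\omega_{R/xR},\omega_{R/xR})\cong\omega_{R/xR}$ directly, after which self-duality $\omega_T\cong\homm_{R/xR}(\omega_{R/xR},\omega_{R/xR})=T$ for the $S_2$-ification $T$ finishes. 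This buys you one fewer exact sequence and no appeal to the codimension-one behaviour of the $S_2$-ification; the price is that you lean on the identification $\omega_T\cong\homm_{R/xR}(T,\omega_{R/xR})$ for the semi-local finite extension $T$ and on the $T$-linearity of your isomorphisms, which is precisely the bookkeeping the paper isolates in Lemmas \ref{FiniteAlgebraAndInjectiveEnvelop} and \ref{LiftingPropertyOfIsomorphisms}; those verifications are routine but should be carried out rather than only asserted.
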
 
      
      Perhaps it is worth to stress the following application of Theorem \ref{IntroRxR} to the homological conjectures. In \cite[Remark 3.6.(i)]{TavanfarReduction}, applying both  of two characterizations of quasi-Gorensteinness of $R/xR$, given in Theorem \ref{IntroRxR}, as well as the first author's reduction of some homological conjectures to normal complete quasi-Gorenstein domains in \cite{TavanfarReduction}, together   with Ochiai and Shimomoto's   \cite{OchiaiShimomotoBertini}, it is  deduced that the validity of the Hochster's Canonical Element Conjecture in a dense open subset (in some sense) settles the Canonical Element Conjecture in general.
      
      Another immediate question is the deformation property of quasi-Gorenstein rings. In \cite[Proposition 3.4.]{TavanfarReduction}, the first author of the present paper, shows that the Ulrich's deformation of certain Gorenstein rings to unique factorization domains, developed in \cite{UlrichGorenstein}, has a quasi-Gorenstein counterpart. Although, at the time of writing the paper, we do not know whether the quasi-Gorensteinness deforms in general, but  we show that the following variant of deformation of   quasi-Gorensteinness, which we call it as analytic deformation\footnote{As far as we know, our paper is the first place where such a variant of deformation is studied. This is remarkable, because the ordinary deformation problem of quasi-Gorensteinness is mysterious.}, holds true.
      
      \begin{thm} \label{IntroDeform}(See Theorem \ref{QuasiGorensteinDeformation}) Suppose that $x$ is a regular element of $R$ such that $R/x^nR$ is quasi-Gorenstein for each $n\in \mn$. Then $R$ is quasi-Gorenstein.\end{thm}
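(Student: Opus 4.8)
The plan is to reduce to the complete case, where a canonical module is at our disposal, and then to observe that the hypothesis forces this canonical module to be cyclic, whence the Krull intersection theorem finishes the proof.

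First I would reduce to the case where $R$ is complete. Note that $R$ is quasi-Gorenstein if and only if $\Rc$ is, because $\Ht^d_\mam(R)\cong\Ht^d_{\mam\Rc}(\Rc)$ (an Artinian module, unchanged by completion) and $\Et_R(R/\mam)\cong\Et_{\Rc}(\Rc/\mam\Rc)$, so that the assertion is equivalent to $\omega_{\Rc}\cong\Rc$; moreover, since $x$ remains a regular element of $\Rc$ by flatness, $\widehat{R/x^nR}\cong\Rc/x^n\Rc$, and $\dime(R/x^nR)=d-1$ (as $x$ is regular and a non-unit---otherwise $R/xR=0$), the ring $R/x^nR$ is quasi-Gorenstein if and only if $\Rc/x^n\Rc$ is. Replacing $R$ by $\Rc$, we may thus assume $R$ is complete, so that $R$ admits a canonical module $\omega:=\omega_R$.

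Next comes the heart of the argument. For each $n\in\mn$, the element $x^n$ is regular on $R$, and it is also regular on $\omega$ because $\As_R(\omega)=\{\map\in\Sp(R):\dime(R/\map)=d\}\subseteq\As_R(R)$; hence $\omega/x^n\omega\cong\ext^1_R(R/x^nR,\omega)$ is a canonical module of $R/x^nR$. Since $R/x^nR$ is quasi-Gorenstein, $\omega/x^n\omega$ is a free $R/x^nR$-module of rank one. Taking $n=1$ and using Nakayama's lemma gives $\mu_R(\omega)=\mu_{R/xR}(\omega/x\omega)=1$, so $\omega$ is cyclic; write $\omega\cong R/I$ with $I=\text{Ann}_R(\omega)$. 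Then $\omega/x^n\omega\cong R/(I+x^nR)$ for every $n$, and comparing annihilators in the isomorphism of cyclic modules $R/(I+x^nR)\cong R/x^nR$ yields $I+x^nR=x^nR$, i.e.\ $I\subseteq x^nR$. Consequently $I\subseteq\insl_{n\in\mn}x^nR\subseteq\insl_{n\in\mn}\mam^n=(0)$ by the Krull intersection theorem, so $I=(0)$ and $\omega\cong R$. Therefore $R$ has a rank-one free canonical module, i.e.\ $R$ is quasi-Gorenstein.

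The proof is short and I do not expect a genuine obstacle; the only points requiring care are bookkeeping in nature: checking that passage to the completion preserves all the hypotheses (in particular the quasi-Gorensteinness of each $R/x^nR$) and invoking the right form of the ``reduction of a canonical module modulo a regular element'' ($\omega_{R/(y)}\cong\omega/y\omega$ for $y$ an $R$-regular element). The genuine content is the simple observation that knowing quasi-Gorensteinness for \emph{all} powers $x^n$ simultaneously promotes the a priori several-generated module $\omega$ to a cyclic one and then confines it to $\insl_{n\in\mn}x^nR=(0)$.
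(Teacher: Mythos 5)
There is a genuine gap at the central step: you assert that $\omega_R/x^n\omega_R\cong\ext^1_R(R/x^nR,\omega_R)$ \emph{is} a canonical module of $R/x^nR$. For a ring that need not be Cohen--Macaulay this is false in general, and it is precisely the point at issue. Writing the complete ring $R$ as a quotient of a Gorenstein local ring $S$ with $\dime(S)=d$, local duality gives only an exact sequence
$$0\rightarrow \omega_R/x^n\omega_R\rightarrow \omega_{R/x^nR}\rightarrow 0:_{\ext^1_S(R,S)}x^n\rightarrow 0,$$
whose third term is the Matlis dual of $\Ht^{d-1}_\mam(R)/x^n\Ht^{d-1}_\mam(R)$; the first map is an isomorphism if and only if $x\notin\unil_{\map\in\At_R\big(\Ht^{d-1}_\mam(R)\big)}\map$ (this is the content of Remark \ref{FirstRemark}, and it is exactly the obstruction that Proposition \ref{quotient_zero_ht} and Corollary \ref{S_2Characterization} are about). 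At the stage where you invoke the isomorphism you know nothing about $R$ beyond the stated hypothesis, so you may not assume the cokernel vanishes. Both of your subsequent deductions depend on it: with only an embedding $\omega_R/x\omega_R\hookrightarrow \omega_{R/xR}\cong R/xR$ you cannot conclude $\mu(\omega_R)=1$ (a submodule of a cyclic module need not be cyclic), and you cannot compare annihilators to get $0:_R\omega_R\subseteq x^nR$.

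The paper's proof of Theorem \ref{QuasiGorensteinDeformation} is built around this difficulty rather than past it: it takes the inverse limit over $n$ of the exact sequences above, identifies $\varprojlim\,\omega_R/x^n\omega_R$ with $\omega_R$, shows that $\varprojlim\big(0:_{\ext^1_S(R,S)}x^n\big)=0$ because the ascending chain $\{0:_{\ext^1_S(R,S)}x^n\}_{n}$ stabilizes while the transition maps are multiplication by $x$, and then analyzes the resulting inverse system $(R/x^nR,\gamma_n)$, whose transition maps are a priori only multiplication by elements $\alpha_n\in R$ and must be units for all large $n$, since otherwise the limit, and hence $\omega_R$, would vanish. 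If you want to keep your outline, you would first have to prove that the hypothesis for \emph{all} $n$ forces $\omega_R/x^n\omega_R\rightarrow\omega_{R/x^nR}$ to be surjective; establishing that is essentially equivalent to redoing the paper's inverse-limit argument.
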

      
      In order to deduce Theorem \ref{IntroRxR} and Theorem \ref{IntroS(RxR)}, we used a special case of the following fact which is the main result of  Section 2.

      \begin{thm} \label{IntroAtt}(See Theorem \ref{AttPrimeSnLocalCohomology}(ii)) Let $M$ be a formally equidimensional module satisfying the Serre-condition $S_2$. Assume, additionally, that $M$ has a canonical module $\omega_M$\footnote{See, Definition and Remark \ref{GeneralCanonicalModuleDefinition} for the definition of the canonical module of an $R$-module.}. Set, $$n:=\inf\{i: i\ge 1\text{\ and\ } \Ht^{d-i}_\mam(M)\neq 0\}.$$ If the formal fibers of $R$ satisfy the $S_{n+2}$-condition\footnote{e.g., if $R$ is a homomorphic image of a Gorenstein ring.}, then
      $$\At_R\big(\Ht^{d-n}_\mam(M)\big)=\{\map\in\ts(R):\dep_{R_\map}\big((\omega_M)_\map\big)=n+1\text{\ and\ }\hit_{\omega_M}\map\ge n+2\}.$$  \end{thm}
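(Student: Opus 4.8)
The plan is to reduce to the complete case, rewrite the attached‑prime assertion as a statement about associated primes of an Ext‑module via local duality, and then analyse that module locally through the canonical module $\omega_M$. \textbf{Reduction to the complete case.} Since $\Ht^i_\mam(M)\cong\Ht^i_{\mam\Rc}(\Mc)$ for every $i$, since $\Mc$ is again formally equidimensional and — the formal fibers being $S_{n+2}$, hence in particular $S_2$ — again satisfies $S_2$, and since by the definition of the canonical module $\widehat{\omega_M}$ is a canonical module of $\Mc$, the integer $n$ is unchanged and $\At_R\big(\Ht^{d-n}_\mam(M)\big)=\{\mathfrak{P}\cap R:\mathfrak{P}\in\At_{\Rc}(\Ht^{d-n}_{\mam\Rc}(\Mc))\}$. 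The point of the $S_{n+2}$‑hypothesis on the formal fibers is that, for $\map\in\ts(R)$ and a prime $\mathfrak{P}$ of $\Rc$ lying minimally over $\map\Rc$, it forces $\dep_{R_\map}\big((\omega_M)_\map\big)=\dep_{\Rc_{\mathfrak{P}}}\big((\omega_{\Mc})_{\mathfrak{P}}\big)$ whenever this common value is at most $n+1$, and $\hit_{\omega_M}\map=\hit_{\omega_{\Mc}}\mathfrak{P}$; hence the right‑hand side of the asserted formula is also unchanged, and I may assume $R=S/I$ with $S$ a complete regular local ring of dimension $N$.

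\textbf{Local duality.} Over $S$ one has $\Ht^{d-n}_\mam(M)^\vee\cong\ext^{N-d+n}_S(M,S)$ and $\omega_M\cong\ext^{N-d'}_S(M,S)$, where $(-)^\vee=\homm_R\big(-,\Et(R/\mam)\big)$; since $R$ is complete, $\At_R\big(\Ht^{d-n}_\mam(M)\big)=\As_R\big(\ext^{N-d+n}_S(M,S)\big)$, so it suffices to identify this set. Fix $\map$, let $\maq\subset S$ be its preimage, and localize: $\ext^{N-d+n}_S(M,S)_\map\cong\ext^{N-d+n}_{S_\maq}(M_\map,S_\maq)$. Because $R$ is catenary and $M$ is equidimensional, $\dime S_\maq$ and $\dime M_\map$ depend only on $\dime R/\map$, and a second application of local duality, now over the Gorenstein local ring $S_\maq$, rewrites this as the Matlis dual over $R_\map$ of $\Ht^{t}_{\map R_\map}(M_\map)$, where $t=\hit_{\omega_M}\map-n$ (up to the shift $d-d'$ when $\dime M<\dime R$), while $(\omega_M)_\map\cong\Ht^{\dime M_\map}_{\map R_\map}(M_\map)^\vee$ is the canonical module $\omega_{M_\map}$. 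Thus $\map\in\At_R\big(\Ht^{d-n}_\mam(M)\big)$ exactly when $\Ht^{t}_{\map R_\map}(M_\map)\neq 0$ and its Matlis dual has depth $0$ over $R_\map$.

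\textbf{The two conditions.} The height bound is the easy half: writing $K^j_M:=\ext^{N-j}_S(M,S)$ for the deficiency modules, the hypothesis that $M$ satisfies $S_2$ gives $\dime K^j_M\leq j-2$ for all $j<d'$ (the standard characterization of Serre's conditions via the deficiency modules), so $\ext^{N-d+n}_S(M,S)=K^{d-n}_M$ has dimension $\leq d-n-2$ and any $\map$ associated to it has $\dime R/\map\leq d-n-2$, i.e. $\hit_{\omega_M}\map\geq n+2$. Granting this, the rest is the depth condition, and here I would use that $M_\map$, being $S_2$ and equidimensional over $R_\map$, is its own canonical bidual $M_\map\cong\omega_{\omega_{M_\map}}$: hence $\mathrm{R}\homm_{S_\maq}(\omega_{M_\map},S_\maq)$ has $M_\map$ as its cohomology in the smallest (grade) degree and all its other cohomologies are the deficiency modules $K^i_{\omega_{M_\map}}$ for $i<\dime M_\map$, each of dimension $\leq i-2$ since a canonical module is $S_2$. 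Feeding this into the biduality isomorphism $\mathrm{R}\homm_{S_\maq}\big(\mathrm{R}\homm_{S_\maq}(\omega_{M_\map},S_\maq),S_\maq\big)\simeq\omega_{M_\map}$ and running the associated spectral sequence — which has no differentials into the $M_\map$‑row and, once $\dep_{R_\map}(\omega_{M_\map})=n+1$ is known, only finitely many differentials out of it — exhibits $(K^{d-n}_M)_\map=\ext^{N-d+n}_{S_\maq}(M_\map,S_\maq)$ as an iterated extension of subquotients of the modules $\Ht^{*}_{\map R_\map}\big(K^i_{\omega_{M_\map}}\big)^\vee$ for $n+1\leq i<\dime M_\map$, with the deepest subquotient controlled by $\Ht^0_{\map R_\map}\big(K^{n+1}_{\omega_{M_\map}}\big)^\vee$. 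One then checks that this is nonzero of depth $0$ over $R_\map$ precisely when $K^i_{\omega_{M_\map}}=0$ for $i<n+1$ and $K^{n+1}_{\omega_{M_\map}}\neq 0$, i.e. precisely when $\dep_{R_\map}(\omega_{M_\map})=n+1$ — the global choice $n=\inf\{i\geq 1:\Ht^{d-i}_\mam(M)\neq 0\}$, which says $K^{d-i}_M=0$ for $1\leq i<n$, being exactly what keeps the higher subquotients from cancelling the deepest one or contributing extra primes.

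The step I expect to be the real obstacle is this last one: controlling the above spectral sequence so as to show that the leading term genuinely survives and that no associated prime is spuriously created or destroyed by higher differentials; this is where I expect part~(i) of Theorem~\ref{AttPrimeSnLocalCohomology} and the auxiliary results of Section~2 on the attached primes in the boundary cohomological degrees to be indispensable. A secondary, more technical, hurdle is making the reduction to the complete case fully precise: the transfer of both the depth and the height conditions between $R_\map$ and $\Rc_{\mathfrak{P}}$ must be arranged so that neither side of the claimed equality gains or loses a prime, and it is exactly for this that one needs the formal fibers of $R$ to be $S_{n+2}$ rather than merely $S_2$.
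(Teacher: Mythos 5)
Your strategy is genuinely different from the paper's: the paper proves Theorem \ref{AttPrimeSnLocalCohomology}(ii) by a double induction on $n$ and on $\dime(M)$, cutting by an $M$-regular element $x$ and exploiting the exact sequence $0\rightarrow \omega_{\Mc}/x\omega_{\Mc}\rightarrow \omega_{\Mc/x\Mc}\rightarrow C\rightarrow 0$ of Remark \ref{FirstRemark}, with the base case supplied by part (i) via Lemma \ref{AttachPrimeDepthCanonicalModuleLemma} and the depth computation of Lemma \ref{Aoyama's result}, and with the shifted localization principle handling the primes other than $\mam$. Your reformulation of $\At_R\big(\Ht^{d-n}_\mam(M)\big)$ as a depth-zero condition on $\ext^{N-d+n}_{S_\maq}(M_\map,S_\maq)$, and your ``easy half'' giving $\hit_{\omega_M}\map\ge n+2$ from the bound $\dime K^{j}_M\le j-2$ for $j<d'$, are correct and are in substance steps the paper also performs.

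The genuine gap is exactly where you flag it, and it is not a removable technicality: the whole content of the theorem is the determination of $\As_{R_\map}\big(\ext^{N-d+n}_{S_\maq}(M_\map,S_\maq)\big)$, and your proposal reduces this to an unexecuted ``one then checks'' on the biduality spectral sequence. Associated primes pass only one-sidedly through the filtration a spectral sequence produces (for $0\rightarrow A\rightarrow B\rightarrow C\rightarrow 0$ one has $\As(A)\subseteq\As(B)\subseteq\As(A)\cup\As(C)$, and a subquotient of a depth-zero module can have positive depth), so both the survival of the ``leading'' contribution governed by $K^{n+1}_{\omega_{M_\map}}$ and the non-creation of spurious depth-zero contributions from the terms $\ext^{p}_{S_\maq}(K^{i}_{\omega_{M_\map}},S_\maq)$ with $n+1<i<\dime M_\map$ each require an argument that is missing; moreover the global vanishing $\Ht^{d-i}_\mam(M)=0$ for $1\le i<n$ does not by itself give the local vanishing at $\map$ that your cancellation scheme needs. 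The paper avoids this head-on comparison of $K^{d-n}_M$ with $K^{n+1}_{\omega_M}$ altogether: within the induction it first establishes that $\omega_M$ satisfies $S_{n+1}$ and that the intermediate local cohomologies vanish, and only then computes the single remaining attached-prime set by passing to $M/xM$ and invoking Lemma \ref{Aoyama's result}. A secondary gap sits in your reduction to the complete case: for a prime $\maq\in\At_{\Rc}\big(\Ht^{d-n}_{\mam\Rc}(\Mc)\big)$ lying over $\map$ but not minimal over $\map\Rc$, the equality $\dep_{R_\map}\big((\omega_M)_\map\big)=\dep_{\Rc_\maq}\big((\omega_{\Mc})_\maq\big)$ is precisely what must be proved --- in the paper it follows from the Cohen--Macaulayness of the fiber forced by the Serre condition on the formal fibers \emph{together with} the lower bound $\dep_{R_\map}\big((\omega_M)_\map\big)\ge n+1$ coming from the already-established $S_{n+1}$-property of $\omega_M$ --- so asserting it ``whenever this common value is at most $n+1$'' presupposes part of the conclusion.
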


       The section 2. of the paper is, also, devoted to reminding some definitions and well-known facts as well as giving some remarks and lemmas which are required throughout the paper.  
            
     By, e.g. \cite{Foxby2},    a Cohen-Macaulay local ring  is Gorenstein if and only if $\Ht^d_\mam(R)$  has finite injective dimension. In \cite{Enochs}, the authors introduce the notion of Gorenstein injective modules as a generalization of the concept of injective modules. Gorenstein injective modules and, its related invariant, the Gorenstein injective dimension are studied by many authors. For example, in \cite[Theorem 2.6]{Yoshizawa} the author shows that if $R$ is a Cohen-Macaulay complete local ring and $\Ht^d_\mam(R)$ is Gorenstein injective then $R$ is Gorenstein. Subsequently, in \cite{Zargar}, the authors relax  the complete assumption and prove that even if $\Ht^d_\mam(R)$ has finite Gorenstein injective dimension, then the Cohen-Macaulay ring $R$ is Gorenstein. On the other hand, for the injective dimension case, there exists a quasi-Gorenstein counterpart. That is, $R$ is a quasi-Gorenstein ring if and only if $\text{Id}_R\big(\Ht^d_\mam(R)\big)<\infty$ (\cite[Theorem 3]{Aoyama3}). This encouraged us to investigate the Gorenstein injective  version of the Aoyama's theorem which is, indeed,  a theorem whenever $R$ is Cohen-Macaulay. We proved the following fact which also recovers the Cohen-Macaulay case of \cite{Zargar}.
     
     \begin{thm} \label{IntroExtDepth}(See  Theorem \ref{characterization}) Assume that $\Ht^d_\mam(R)$ has finite Gorenstein injective dimension. If, furthermore, $$\ext^1_{\Rc}(\omega_{\Rc},\omega_{\Rc})=\ldots=\ext^{\dep_{\Rc}(\omega_{\Rc})}_{\Rc}(\omega_{\Rc},\omega_{\Rc})=0,$$ then $R$ is quasi-Gorenstein.\end{thm}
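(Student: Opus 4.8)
The plan is to pass to the completion, convert the Gorenstein-injectivity hypothesis into a statement about the $G$-dimension of the canonical module, and then induct on depth, cutting by regular elements until we reach the Cohen--Macaulay situation, which is covered by Zargar's theorem. Concretely, since $R$ is quasi-Gorenstein if and only if $\Rc$ is, since $\Ht^d_\mam(R)=\Ht^d_{\mam\Rc}(\Rc)$, and since finiteness of its Gorenstein injective dimension and the stated vanishing of $\ext^i_{\Rc}(\omega_{\Rc},\omega_{\Rc})$ survive completion, I would assume from the start that $R=\Rc$, with canonical module $\omega_R$. By Grothendieck local duality $\Ht^d_\mam(R)$ is the Matlis dual of $\omega_R$, and it is Artinian. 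The key input is then the interchange of Gorenstein injective and Gorenstein projective dimensions under Matlis duality, combined with the fact that for a finitely generated module finiteness of the Gorenstein projective dimension is the same as finiteness of the classical $G$-dimension: this converts $\text{Gid}_R\big(\Ht^d_\mam(R)\big)<\infty$ into $\text{G-dim}_R(\omega_R)<\infty$. I expect this step — making the Matlis-duality comparison of Gorenstein dimensions go through for the non-finitely-generated Artinian module $\Ht^d_\mam(R)$, and checking the descent of finiteness of $\text{Gid}$ along $R\to\Rc$ — to be the principal technical obstacle; everything afterwards is a clean induction.

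With the hypothesis now reading ``$\text{G-dim}_R(\omega_R)<\infty$ and $\ext^i_R(\omega_R,\omega_R)=0$ for $1\le i\le\dep_R(\omega_R)$'', I would prove $\omega_R\cong R$ by induction on $s:=\dep R$. The Auslander--Bridger formula gives $\text{G-dim}_R(\omega_R)=\dep R-\dep_R(\omega_R)\ge 0$, so $\dep_R(\omega_R)\le s$. If $s=0$, then $\dep_R(\omega_R)=0$; since $\omega_R$ satisfies $S_2$ and $\dime\omega_R=\dime R$, this forces $\dime R=0$, so $R$ is Cohen--Macaulay, and Zargar's theorem (recalled in the Introduction) yields that $R$ is Gorenstein, hence quasi-Gorenstein.

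Now suppose $s\ge 1$. The same inequality together with the argument just given shows $\dep_R(\omega_R)\ge 1$, so I may choose $x\in\mam$ that is a nonzerodivisor on $R$ and hence, as $\As_R\omega_R\subseteq\As_R R$, on $\omega_R$. Put $R':=R/xR$; then $R'$ is complete, $\omega_{R'}\cong\omega_R/x\omega_R$, and $\dep_{R'}(\omega_{R'})=\dep_R(\omega_R)-1$. Base change along the regular element $x$ preserves finiteness of $G$-dimension, so $\text{G-dim}_{R'}(\omega_{R'})<\infty$; and from $\ext^i_{R'}(\omega_{R'},\omega_{R'})\cong\ext^i_R(\omega_R,\omega_R/x\omega_R)$ together with the long exact sequence obtained by applying $\ext_R(\omega_R,-)$ to $0\to\omega_R\xrightarrow{x}\omega_R\to\omega_R/x\omega_R\to 0$, one reads off $\ext^i_{R'}(\omega_{R'},\omega_{R'})=0$ for $1\le i\le\dep_{R'}(\omega_{R'})$. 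Thus the inductive hypothesis applies to $R'$ and gives $\omega_R/x\omega_R\cong R'$. To lift this, note that $\omega_R/x\omega_R$ is cyclic, so $\omega_R$ is cyclic by Nakayama, say $\omega_R\cong R/J$; tensoring $0\to J\to R\to\omega_R\to 0$ with $R/xR$ and using $\tor_1^R(\omega_R,R/xR)=(0:_{\omega_R}x)=0$ produces an exact sequence $0\to J/xJ\to R/xR\to\omega_R/x\omega_R\to 0$ in which the displayed surjection is an endomorphism of the Noetherian module $R/xR$ onto itself, hence an isomorphism. Therefore $J/xJ=0$, so $J=0$ by Nakayama, whence $\omega_R\cong R$ and $R$ is quasi-Gorenstein, completing the induction.
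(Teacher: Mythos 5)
Your overall strategy --- complete, convert Gorenstein injectivity of $\Ht^d_\mam(R)$ into $\text{G-dim}_{\Rc}(\omega_{\Rc})<\infty$, then induct by cutting with regular elements --- is the same as the paper's (which routes the first step through Sazeedeh's lemma and Christensen--Foxby--Holm, and the second through Theorem \ref{G-dim}). But there is a genuine gap at the pivot of your induction: the assertion that $\omega_{R/xR}\cong\omega_R/x\omega_R$ for an \emph{arbitrary} nonzerodivisor $x$. This is false for non-Cohen--Macaulay rings. What is true (Remark \ref{FirstRemark}, exact sequence (\ref{SecondExactSequence})) is only an exact sequence
$$0\rightarrow\omega_{\Rc}/x\omega_{\Rc}\rightarrow\omega_{\Rc/x\Rc}\rightarrow C\rightarrow 0,\qquad C\cong\big(\Ht^{d-1}_\mam(R)/x\Ht^{d-1}_\mam(R)\big)^\vee,$$
and $C=0$ precisely when $x$ avoids every attached prime of $\Ht^{d-1}_\mam(R)$. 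By Lemma \ref{AttachPrimeDepthCanonicalModuleLemma}(ii), $\mam$ itself is such an attached prime exactly when $\dep_{\Rc}(\omega_{\Rc})=2$ (with $d\ge 3$), so in that case \emph{no} choice of $x$ works. Concretely, for the $4$-dimensional depth-$2$ quasi-Gorenstein Segre product in the introduction, $R/xR$ fails to be quasi-Gorenstein for every regular element $x$ (Proposition \ref{quotient_zero_ht}); hence the conclusion you want to draw about $R'$ from the inductive hypothesis is simply false there, and the Nakayama lift cannot even start.

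This is exactly why the paper's proof of Theorem \ref{G-dim} splits into cases. When $\dep(\omega_R)\le 2$ it never identifies $\omega_R/\mathbf{x}\omega_R$ with $\omega_{R/\mathbf{x}R}$: it cuts by a regular sequence $\mathbf{x}$ of length $\dep(\omega_R)$, uses the Ext-vanishing to get $\homm_{R/(\mathbf{x})}\big(\omega_R/\mathbf{x}\omega_R,\omega_R/\mathbf{x}\omega_R\big)\cong R/(\mathbf{x})$, deduces $\dep\big(R/(\mathbf{x})\big)=0$, and applies Lemma \ref{ZeroDepthTotallyReflexive} to the totally reflexive module $\omega_R/\mathbf{x}\omega_R$. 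Only when $\dep(\omega_R)\ge 3$ does Lemma \ref{AttachPrimeDepthCanonicalModuleLemma}(ii) give $\mam\notin\At\big(\Ht^{d-1}_\mam(R)\big)$, so that prime avoidance produces an $x$ with $C=0$ and the descent you envisage becomes legitimate (the paper then applies this to all $R/x^nR$ and concludes via the analytic deformation Theorem \ref{QuasiGorensteinDeformation}). Your final lifting step is fine once $\omega_{R/xR}\cong\omega_R/x\omega_R\cong R/xR$ is actually established; the missing ingredient is the justification of that first isomorphism, and in the depth-two case a genuinely different argument is unavoidable.
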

     
     The proof of  Theorem  \ref{IntroExtDepth} shows that the vanishing of $\ext^1_R(\omega,\omega_R)$ and $\ext^2_R(\omega_R,\omega_R)$, in the following sense, is sufficient for our purpose .

     \begin{ques} \label{IntroQues} (See  Question \ref{ExtQuestion}) Assume that $R$ has a canonical module $\omega_R$ and that, $$\text{Gdim}_R(\omega_R)<\infty.$$ Then do we have, $\ext^1_R(\omega_R,\omega_R)=\ext^2_R(\omega_R,\omega_R)=0?$\end{ques}

     In fact an affirmative answer to the above question, which is proposed in Section 4, shows that $R$ is quasi-Gorenstein if and only if $\Ht^d_\mam(R)$ has finite Gorenstein injective dimension.
     
         In \cite{Bass},  the famous theorem of Bass  gives a criterion for Gorensteinness in terms  of the irreducibility of all of the parameter ideals. Indeed, there exists a non-Gorenstein ring with an irreducible parameter ideal. But, by virtue of \cite{Marley}, there is an  invariant $\ell_d(R)$ of $R$ such that $R$ is Gorenstein if and only if  some parameter ideal contained in $\mam^{\ell_d(R)}$ is irreducible. In Section 5, we prove the following   quasi-Gorenstein version of  \cite{Marley} (see  Section 5 for the definition of the limit closure).
         
        \begin{thm}\label{IntroLimit} (See  Theorem \ref{characterization}) $R$ is quasi-Gorenstein if and only if $\Rc$ is unmixed and the limit closure of each parameter ideal (of some parameter ideal contained in $\mam^{\ell_d(R)}$) is irreducible. \end{thm}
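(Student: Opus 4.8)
The plan is to prove the equivalence of three statements: (a) $R$ is quasi-Gorenstein; (b) $\Rc$ is unmixed and $\mathbf{x}^{\lim}$ is irreducible for every system of parameters $\mathbf{x}=x_1,\dots,x_d$; (c) $\Rc$ is unmixed and $\mathbf{x}^{\lim}$ is irreducible for some system of parameters $\mathbf{x}\subseteq\mam^{\ell_d(R)}$. I would prove (a)$\Rightarrow$(b)$\Rightarrow$(c)$\Rightarrow$(a). Since the limit closure, the irreducibility of an $\mam$-primary ideal, quasi-Gorensteinness and the invariant $\ell_d(R)$ are all unchanged under $\mam$-adic completion (limit closure commutes with the flat map $R\to\Rc$, and irreducibility is detected on the finite length module $R/\mathbf{x}^{\lim}\cong\Rc/\mathbf{x}^{\lim}\Rc$), I would first reduce to the case $R=\Rc$. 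Throughout I use the identification of $\mathbf{x}^{\lim}:=\bigcup_{t\ge 0}\big((x_1^{t+1},\dots,x_d^{t+1}):_R(x_1\cdots x_d)^t\big)$ with the kernel of the natural map $R/(\mathbf{x})\to\Ht^d_\mam(R)$, so that $R/\mathbf{x}^{\lim}$ is canonically a submodule of $\Ht^d_\mam(R)$; by the monomial theorem $\mathbf{x}^{\lim}\neq R$, hence $R/\mathbf{x}^{\lim}$ is a nonzero module of finite length, and $\mathbf{x}^{\lim}$ is irreducible precisely when $\operatorname{soc}(R/\mathbf{x}^{\lim})$ is $1$-dimensional over $R/\mam$.

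For (a)$\Rightarrow$(b): if $R$ is quasi-Gorenstein then $\Ht^d_\mam(R)\cong\Et(R/\mam)$ has $1$-dimensional socle, so its submodule $R/\mathbf{x}^{\lim}$ has at most --- and, being nonzero, exactly --- $1$-dimensional socle; thus $\mathbf{x}^{\lim}$ is irreducible for every $\mathbf{x}$. For unmixedness of $R$ I would combine the general equality $\At_R\big(\Ht^d_\mam(R)\big)=\{\map\in\As(R):\dime(R/\map)=d\}$ with $\At_R\big(\Et(R/\mam)\big)=\As_R\big(\homm_R(\Et(R/\mam),\Et(R/\mam))\big)=\As(R)$, which forces every associated prime of $R$ to have dimension $d$ (alternatively, quote that quasi-Gorenstein rings are unmixed). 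The implication (b)$\Rightarrow$(c) is immediate once one observes, via Krull's height theorem and prime avoidance, that $\mam^{\ell_d(R)}$ contains systems of parameters.

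The substantial implication is (c)$\Rightarrow$(a). Fix $\mathbf{x}\subseteq\mam^{\ell_d(R)}$ a system of parameters with $\mathbf{x}^{\lim}$ irreducible. The key property of Marley's invariant $\ell_d(R)$ --- that for parameter ideals lying this deep in powers of $\mam$ the inclusion $R/\mathbf{x}^{\lim}\hookrightarrow\Ht^d_\mam(R)$ induces an isomorphism on socles, equivalently that $\dime_{R/\mam}\operatorname{soc}(R/\mathbf{x}^{\lim})$ has reached its stable value $\dime_{R/\mam}\operatorname{soc}(\Ht^d_\mam(R))$ --- then gives $\dime_{R/\mam}\operatorname{soc}\big(\Ht^d_\mam(R)\big)=1$. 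Since $\Ht^d_\mam(R)$ is Artinian, its socle is essential in it, so $\Ht^d_\mam(R)\hookrightarrow\Et(R/\mam)$; applying Matlis duality $(-)^\vee:=\homm_R(-,\Et(R/\mam))$ to this inclusion produces a surjection $R\cong\Et(R/\mam)^\vee\twoheadrightarrow\Ht^d_\mam(R)^\vee=\omega_R$, so the canonical module $\omega_R$ is cyclic, $\omega_R\cong R/\maa$ with $\maa=\operatorname{Ann}_R(\omega_R)$. Finally $\maa=0$: for a minimal prime $\map$ of $R$ the ring $R_\map$ is Artinian and $(\omega_R)_\map\cong\omega_{R_\map}$, the Matlis dual of $R_\map$, hence of the same finite length as $R_\map$; thus the localized surjection $R_\map\twoheadrightarrow(\omega_R)_\map$ is an isomorphism and $\maa_\map=0$. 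As $R$ is unmixed, $\As(R)=\operatorname{Min}(R)$, whence $\As_R(\maa)\subseteq\operatorname{Min}(R)$; were $\maa\neq 0$ some minimal prime would be associated to $\maa$, contradicting $\maa_\map=0$. Therefore $\maa=0$, $\omega_R\cong R$, and $R$ is quasi-Gorenstein.

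The hardest point I expect is pinning down --- or re-deriving in Section~5 --- exactly the socle-stabilization property of $\ell_d(R)$ used in (c)$\Rightarrow$(a): \cite{Marley} is stated for the irreducibility of the parameter ideals $(\mathbf{x})$ themselves in the Cohen--Macaulay/Gorenstein regime, where $\mathbf{x}^{\lim}=(\mathbf{x})$ and the lower local cohomology vanishes, so one must check that the underlying length and socle estimates persist verbatim with $(\mathbf{x})$ replaced by $\mathbf{x}^{\lim}$, i.e.\ that $\dime_{R/\mam}\operatorname{soc}(R/\mathbf{x}^{\lim})$ becomes independent of $\mathbf{x}$ once $\mathbf{x}\subseteq\mam^{\ell_d(R)}$ and then equals $\dime_{R/\mam}\operatorname{soc}(\Ht^d_\mam(R))$. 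By comparison, the reduction to the complete case, the appeal to the monomial theorem for $\mathbf{x}^{\lim}\neq R$, and the Matlis-duality computation are routine.
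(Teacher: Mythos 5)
Your proposal is correct and follows essentially the same route as the paper's Theorem~\ref{characterization}: both arguments reduce everything to ``$\Rc$ is unmixed and $\Ht^d_\mam(R)$ has one-dimensional socle'', both exploit the embedding $R/\{\mathbf{x}\}^{\lim}_R\hookrightarrow \Ht^d_\mam(R)$ of Remark~\ref{StartOfAllRemarks}, and both finish by deducing that $\omega_{\Rc}$ is cyclic with zero annihilator (you do this by localizing at minimal primes; the paper quotes Aoyama's identity (\ref{AnnihilatorCanonicalModule}) --- the two are equivalent under unmixedness). Two details are handled differently. First, you invoke ``the monomial theorem'' to get $\{\mathbf{x}\}^{\lim}_R\neq R$; the paper deliberately avoids relying on the Monomial Conjecture (it is mentioned only in a footnote) by noting that if $\{\mathbf{x}\}^{\lim}_R=R$ the ideal is vacuously irreducible, so the implication (a)$\Rightarrow$(b) needs only ``socle of dimension at most one''; and in your direction (c)$\Rightarrow$(a) the nonvanishing is forced for free, since a surjection $\text{Soc}\big(R/\{\mathbf{x}\}^{\lim}_R\big)\rightarrow \text{Soc}\big(\Ht^d_\mam(R)\big)$ with zero source would contradict $\Ht^d_\mam(R)\neq 0$. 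Second, the socle-stabilization issue you flag as the hardest point does not require re-deriving Marley's length estimates for limit closures: since $(\mathbf{x})\subseteq\{\mathbf{x}\}^{\lim}_R$, the natural map $R/(\mathbf{x})\rightarrow \Ht^d_\mam(R)$ factors through $R/\{\mathbf{x}\}^{\lim}_R$, so the surjectivity of $\text{Soc}\big(R/(\mathbf{x})\big)\rightarrow\text{Soc}\big(\Ht^d_\mam(R)\big)$ for $\mathbf{x}\subseteq\mam^{\ell_d(R)}$ --- which is Marley--Rogers--Sakurai's Proposition 2.5 and is proved there for arbitrary local rings, not only Cohen--Macaulay ones --- immediately yields surjectivity of $\text{Soc}\big(R/\{\mathbf{x}\}^{\lim}_R\big)\rightarrow\text{Soc}\big(\Ht^d_\mam(R)\big)$, which is all you need; this factoring is exactly the commutative-diagram step in the paper's proof of (vii)$\Rightarrow$(iv).
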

        
        It is worth pointing out that    \cite[Proposition 2.3.]{Marley}, \cite{Fouli}, and \cite[3.2 Theorem]{O'Carroll} also show that the  limit closure of parameter ideals of non-Cohen-Macaulay rings can be considered as a  counterpart to the parameter ideals of Cohen-Macaulay rings. We would like to  stress that Theorem \ref{IntroLimit} may have the following computational application. According to Theorem \ref{IntroLimit} for verifying the quasi-Gorensteinness of a ring $R$, using Macaulay2 system, we can find a system of parameters $\mathbf{x}$ of $R$ and then check whether, $\mu\big(\{\mathbf{x}\}^{\lim}_R:\mam\big)=\mu(\{\mathbf{x}\}^{\lim}_R)+1$, wherein $\{\mathbf{x}\}^{\lim}_R$ denotes the limit closure of $\mathbf{x}$. \footnote{Here,  the notation $\mu$ stands for the minimal number of generators. Recall that a primary ideal $\maa$ of $(R,\mam)$ is irreducible if and only if $\mu(\maa:\mam)=\mu(a)+1$.}. This can be, considerably, faster than computing the free resolutions, especially when the defining ideal of $R$ has too many generators or generators with too many summands. It is noteworthy to mention, here, that, e.g., the limit closure of any system of parameters $\mathbf{x}$ of a Buchsbaum local ring $R$ is just the ideal, $(x_1^2,\ldots,x_d^2):x_1\cdots x_d$ (see, \cite[Lemma (4.6).]{GotoOnTheAssociated}).

    \section{Attached Prime ideals of Local Cohomology modules}
    
    \begin{rmi}\label{PreliminariesCanonical}
      \begin{enumerate} Here we aim to review some, probably known, definitions and facts.
         \item[(i)] Throughout the paper, we use the notation $\text{Assht}$ to denote the set, $$\text{Assht}_R(M):=\{\map\in\As_R(M):\dime(R/\map)=d'\}.$$ $M$ is said to be equidimensional (respectively, unmixed) if $\text{Assht}(M)=\text{minAss}_R(M)$ (respectively, $\text{Assht}(M)=\As_R(M)$). $M$ is called formally equidimensional (respectively, formally unmixed) if $\Mc$\footnote{Here the notation $\widehat{M}$ stands for the completion of $M$ with respect to the $\mam$-adic topology.} is an equidimensional (respectively, unmixed) $\Rc$-module.
         \item[(ii)]  For each ideal $\maa$ of $R$ we set, $$\hit_M\maa:=\hit_{R/0:_RM}\big((\maa+0:_RM)/0:_RM\big).$$ Note that, $\dim_{R_\map}M_\map=\hit_M\map$, for each $\map\in \Supp_R(M)$. Thus,
           $$\hit_M\maa=\inf\{\hit_M\map:\map\in \Var(\maa)\ins \Supp_R M\}=\inf\{\dime_{R_\map}M_\map:\map\in \Var(\maa)\ins \Supp_R M\}.$$
           
         \item[(iii)] We say that   $M$ satisfies the Serre-condition $S_n$ precisely when, $$\dep_{R_\map}(M_\map)\ge \min\{n,\dime_{R_\map}(M_\map)\},$$ for each $\map\in \ts(R)$. In fact,  the following statements  are equivalent.
           \begin{enumerate}
             \item[(a)] $M$ satisfies $S_n$.
             \item[(b)] $\grd(\maa,M)\ge \min\{n,\hit_M\maa\}$ for each ideal $\maa$ of $R$.
             \item[(c)] if $\maa$ is an ideal generated by an $M$-regular sequence of length $j\le n-1$ then  for each $\map\in\As_R(M/\maa M)$ we have $\hit_M(\map)=j$.
            
           \end{enumerate}
        \item[(iv)] We use the notation,  $M^\vee$, to denote the Matlis dual  $\homm_R\big(M,\text{E}(R/\mam)\big)$ of an $R$-module $M$. The canonical module for $R$, denoted by $\omega_R$, is a finitely generated $R$-module such that, $\omega_R^\vee\cong \Ht^d_\mam(R)$. The  module $\omega_R$, if exists,  is an $d$-dimensional $R$-module satisfying the Serre-condition $S_2$ and is unique up to isomorphism (\cite[12.1.6 Theorem]{LocalCohomology}, \cite[12.1.9 Proposition(i)]{LocalCohomology} \cite[12.1.18 Theorem(i)]{LocalCohomology}). So $R$ is a quasi-Gorenstein ring if and only if $R$ has a  canonical module which is a rank one free $R$-module. In particular a quasi-Gorenstein ring is $S_2$ and unmixed (\cite[Lemma 1.1]{Aoyama2}). A fortiori,  by  \cite[(1.8)]{Aoyama} we have, 
        \begin{center}
          \begin{equation}  
            \label{AnnihilatorCanonicalModule}
            0:_R\omega_R=\insl_{\maq}\maq,
          \end{equation}
        \end{center}
                   where the intersection runs over the primary components $\maq$ of the zero ideal satisfying   $\dime(R/\maq)=d$.
This, again, implies the unmixedness of quasi-Gorenstein rings. If $R$ is a homomorphic image of a Gorenstein local ring $S$ of dimension $n$ then by virtue of \cite[11.2.6 Local Duality Theorem ]{LocalCohomology} we have, $\omega_R:=\ext^{n-d}_S(R,S)$. 
        \item[(v)] (See  \cite[Lemma 1.9.]{SchenzelLocalCohomology}) Assume that $R$ is a homomorphic image of a Gorenstein local ring $S$ of dimension $n$.  For each $l\in \mathbb{Z}$ set, $\omega_M^l:=\ext^{n-l}_S(M,S)$. By \cite[11.2.6 Local Duality Theorem]{LocalCohomology} there are functorial isomorphisms \begin{center}
  \begin{equation}
    \label{LocalDuality}
    \Ht^l_\mam(N)\cong \ext_S^{n-l}(N,S)^\vee,
  \end{equation}
\end{center}
 for all finitely generated $R$-modules $N$. The $R$-module $\omega_M^{\dim M}$ is said to be the canonical module of $M$ and is denoted by $\omega_M$. The $R$-module $\omega_M$ satisfies $S_2$ and $\As_R(\omega_M)=\text{Assht}_R(M)$. In particular, $\dime(\omega_M)=\dime(M)$ (see \cite[Lemma 1.9.(c)]{SchenzelLocalCohomology}). 
 
\ \ \  \ Let $\map\in \Supp_R(\omega_M)$. Then, by \cite[Lemma 1.9(a)]{SchenzelLocalCohomology}, $(\omega_M)_\map\cong \omega_{M_\map}^{\dime(M)-\dime(R/\map)}$. It follows, from (\ref{LocalDuality}), that $\Ht^{\dime(M)-\dime(R/\map)}_{\map  R_\map}(M_\map)\neq 0.$ Consequently, $\dime(R/\map)+\dime(M_\map)=\dime(M)$. In particular, we get $(\omega_M)_\map=\omega_{M_\map}$.
 
    \ \ \  The argument of the preceding paragraph in conjunction with \cite[Lemma 1.9(b)]{SchenzelLocalCohomology} yields, $$ \text{Supp}_R(\omega_M)=\{\map\in \Supp_R(M):\dime(M_\map)+\dime(R/\map)=\dime(M)\}.$$

      \end{enumerate}
    \end{rmi}
    
    Similarly, as in part (iv) of the preceding reminder, the definition of the canonical module of a finitely generated $R$-module can be extended to all local rings.
    
    \begin{defi-rem} \label{GeneralCanonicalModuleDefinition}
        Let $M$ be a finitely generated $d'$-dimensional $R$-module. Then we say that a finitely generated $R$-module $\omega_M$ is the canonical module for $M$ if $\omega_M^\vee\cong \Ht^{d'}_\mam(M)$. 
        
        A similar argument as in the proof of \cite[12.1.6 Theorem]{LocalCohomology} shows that the canonical module $\omega_M$ of $M$, if exists, is unique up to isomorphism.  Note that, $\omega_{\widehat{M}}\cong \omega_M\tenr \widehat{R}$. By Reminder \ref{PreliminariesCanonical}(v), $\widehat{M}$ always has a canonical module. In  Lemma \ref{CanonicalModuleAndModuleHeghtInequality} we prove some counterparts to the  properties of $\omega_M$ stated in the part (v) of the preceding reminder. Note that,  if $R$ has a canonical module $\omega_R$  then, by  \cite[6.1.10 \# Exercise]{LocalCohomology},   for each $d$-dimensional $R$-module $N$  
            we have  $\homm_R(N,\omega_R)\cong\omega_N$.
    \end{defi-rem}

    \begin{rem}\label{FirstRemark} 
       
        Let $M$ be a (finitely generated) $d'$-dimensional $R$-module. By virtue of the Cohen's structure theorem there exists a Gorenstein ring $S$ such that $\dime(R)=\dime(\widehat{R})=\dime(S)$ and $\widehat{R}$ is a homomorphic image of $S$.
        
        (a) Let $x\in R\backslash \text{Z}_R(M)$.  Then from the exact sequence,
        $$0\rightarrow M\overset{x}{\rightarrow} M\rightarrow M/xM\rightarrow 0,$$
         we get the exact sequence, $H^{d'-1}_{\mathfrak{m}}(M)\overset{x}{\rightarrow} H^{d'-1}_{\mathfrak{m}}(M)\rightarrow H^{d'-1}_{\mathfrak{m}}(M/xM)\overset{\Delta}{\rightarrow}H^{d'}_{\mathfrak{m}}(M)\overset{x}{\rightarrow}H^{d'}_{\mathfrak{m}}(M),$ which yields the exact sequence, 
         \begin{equation}
         \label{FirstExactSequence}
           0\rightarrow \Ht^{d'-1}_\mam(M)/x\Ht^{d'-1}_\mam(M)\rightarrow \Ht^{d'-1}_\mam(M/xM)\rightarrow 0:_{\Ht^{d'}_\mam(M)}x\rightarrow 0.
          \end{equation}  
          Applying the Matlis dual functor  to the exact sequence (\ref{FirstExactSequence})  we obtain the exact sequence,
          \begin{equation}
             \label{SecondExactSequence}
             0\rightarrow \omega_{\widehat{M}}/x\omega_{\widehat{M}}\overset{\iota}{\rightarrow} \omega_{\widehat{M}/x\widehat{M}}\rightarrow C\rightarrow 0,
          \end{equation}
          where, $C=\big(\Ht^{d'-1}_\mam(M)/x\Ht^{d'-1}_\mam(M)\big)^\vee$. By \cite[11.2.6 Local Duality Theorem]{LocalCohomology}, 
          \begin{align*}
          C\cong \big(\Ht^{d'-1}_{\mam\widehat{R}}(\widehat{M})/x\Ht^{d-1}_{\mam\widehat{R}}(\widehat{M})\big)^\vee &= \homm_{\widehat{R}}\big(\Ht^{d'-1}_{\mam\widehat{R}}(\widehat{M})\ten_{\widehat{R}}\widehat{R}/x\widehat{R},\text{E}(\widehat{R}/\mam\widehat{R})\big)&\\ &\cong \homm_{\widehat{R}}\Big(\widehat{R}/x\widehat{R},\big(\Ht^{d'-1}_{\mam\widehat{R}}(\widehat{M})\big)^\vee\Big)&\\ &\cong 0:_{\ext^{d-d'+1}_S(\widehat{M},S)}x.
          \end{align*}

          (b) Now assume, furthermore, that  $R$ is formally equidimensional. Set, $M=R$. Then \cite[page 250, Lemma 2]{Matsumura} implies that, $\hit(\map)+\dime(\Rc/\map)=\dime(\Rc)$, for each $\map\in \ts(\Rc)$. Set, $\Rc=S/\maa$. Let, $\map=\mathfrak{P}/\maa\in \ts(\Rc)$ where $\mathfrak{P}\in \ts(S)$. We have, $$\dime(S_\mathfrak{P})=\hit_{S}\mathfrak{P}=\dime(S)-\dime(S/\mathfrak{P})=\dime(\Rc)-\dime(\Rc/\map)=
          \hit_{\Rc}(\map)
          =\dime(\Rc_\map).$$ Therefore, $\ext^1_S(\Rc,S)_\map\cong \ext^1_S(\Rc,S)_\mathfrak{P}\cong \ext^1_{S_\mathfrak{P}}(\Rc_\map,S_\mathfrak{P})\cong\homm_{\Rc_\map}\big(\Ht^{\dime(\Rc_\map)-1}_{\map \Rc_\map}(\Rc_\map),\text{E}(\Rc_\map/\map \Rc_\map)\big)$. Consequently, 
            \begin{align*} 
              C_\map\cong \big(0:_{\ext^1_S(\Rc,S)}x\big)_\map&\cong \homm_{\Rc_\map}\Big(\Rc_\map/x\Rc_\map,\homm_{\Rc_\map}\big(\Ht^{\dime(\Rc_\map)-1}_{\map \Rc_\map}(\Rc_\map),\text{E}(\Rc_\map/\map \Rc_\map)\big)\Big)&\\ &\cong\homm_{\Rc_\map}\big(\Ht^{\dime(\Rc_\map)-1}_{\map \Rc_\map}(\Rc_\map)/x\Ht^{\dime(\Rc_\map)-1}_{\map \Rc_\map}(\Rc_\map),\text{E}(\Rc_\map/\map \Rc_\map)\big).
            \end{align*} 
           We summarize the above observation as follows.
           
           (c) If in addition $R$ is presumed to be formally equidimensional then, setting $M=R$, the module $C$ in the exact sequence (\ref{SecondExactSequence}) is locally dual to, $\Ht^{\dime(\Rc_\map)-1}_{\map \Rc_\map}(\Rc_\map)/x\Ht^{\dime(\Rc_\map)-1}_{\map \Rc_\map}(\Rc_\map)$, for each $\map\in \ts(\widehat{R})$.
    \end{rem}
    
    In the following lemma we show that some of the properties of the canonical module, which are well-known in the case where $R$ is a homomorphic image of a Gorenstein ring, hold in general. 
         
    \begin{lem} Let $M$ has a canonical module. \label{CanonicalModuleAndModuleHeghtInequality}
      The following statements hold.
      \begin{enumerate}
         \item[(i)] 
           $\As_R(\omega_M)=\text{Assht}_R(M)$. In particular, $\dim(\omega_M)=\dime(M)$ and $\Supp_R(\omega_M)\subseteq \Supp_R(M)$.
           \item[(ii)] $\Supp_{R}(\omega_M)=\{\map\in\Supp_{R}(M):\hit_M\map+\dime(R/\map)=d'\}$.
        \item[(iii)]
          For each ideal $\maa$ of $R$ we have $\hit_{\omega_M}\maa\ge \hit_M\maa$. But, $\hit_{\omega_M}\map=\hit_M\map$ for each $\map\in \Supp(\omega_M)$.
        \item[(iv)] $\omega_M$ satisfies the Serre condition $S_2$.
      \end{enumerate}  
       
      \begin{proof}  
      
        (i) By, \cite[Theorem 23.2]{Matsumura}, we can deduce that $\As_RN=\{\maq\ins R;\maq\in \As_{\widehat{R}}(N\tenr \widehat{R})\}$ for each finitely generated $R$-module $N$. Thus, \cite[Lemma 1.9.(c)]{SchenzelLocalCohomology} implies that,
          \begin{align*}
           \As_R(\omega_M)&=\{\maq\ins R:\maq\in \As_{\widehat{R}}(\omega_M\tenr \Rc)\} &\\ &=\{\maq\ins R:\maq\in \As_{\widehat{R}}(\omega_{\Mc})\}&\\ &=\{\maq\ins R:\maq\in \text{Assht}_{\Rc}(\Mc\}.
          \end{align*}
          Thus it is enough to prove that, $\{\maq\ins R:\maq\in \text{Assht}_{\Rc}(\Mc)\}=\text{Assht}_R(M)$. Let, $\map\in \text{Assht}_R(M)$. There exists $\maq\in \ts(\Rc)$ such that $\map\Rc\subseteq \maq$ and $\dime(\Rc/\maq)=\dime(\Rc/\map\Rc)$. So we have $\dime(\widehat{M})=\dime(M)=\dime(R/\map)=\dime(\Rc/\map\Rc)=\dime(\Rc/\maq)$. Since, $\maq\in \min(\map\Rc)$, so \cite[Theorem 9.5.]{Matsumura} implies that $\maq\ins R=\map$. Thus, $\maq\in \Supp_{\Rc}(\Mc)$. Consequently $\maq\in \text{Assht}_{\Rc}(\Mc)$, as required.
         
          Conversely, let $\maq\in \text{Assht}_{\Rc}(\Mc)$ and $\map=\maq\ins R$. We have $\map\in \As_R(M)$ because $\maq\in \As_{\Rc}(\Mc)$. Also, $\dime_R(M)\ge \dime(R/\map)=\dime(\Rc/\map\Rc)\ge \dime(\Rc/\maq)=\dime_{\Rc}(\Mc)=\dime_R(M)$.

         (ii) Let $N$ be a finitely generated $R$-module and $\map\in \Supp_R(N)$. Then we can choose $\maq\in \min(\map\Rc)$ such that $\dime(\Rc/\maq)=\dime(\Rc/\map\Rc)=\dime(R/\map)$. Therefore, $\maq\ins R=\map$ and so $\Rc_\maq$ is a faithfully flat extension of $R_\map$. Since, $(N\tenr\Rc)\cong N_\map\ten_{R_\map}\Rc_\maq$ so we have $\maq\in \Supp_{\Rc}(N\tenr \Rc)$. Due to the formula \cite[Theorem A.11]{Herzog} we have,
         \begin{align*}
           \dime\big((N\tenr \Rc)_\maq\big)=\dime(N_\map\ten_{R_\map} \Rc_\maq)= \dime(N_\map)+\dime(\Rc_\maq/\map \Rc_\maq)=\dime(N_\map).
         \end{align*}
         Now, our claim follows from the above observation in conjunction with the fact that $\omega_{\Mc}=\omega_M\tenr \Rc$.
      
         (iii) Assume that $R$ is complete. Then using Reminder \ref{PreliminariesCanonical}(v) we have $(\omega_M)_\map\cong \omega_{M_\map}$ for each $\map\in\Supp_R(\omega_M)$. It turns out that, $\hit_{\omega_M}\map=\dime_{R_\map}\big((\omega_M)_\map\big)=\dime_{R_\map}(M_\map)=\hit_{M}\map$, for each $\map\in \Supp_R(\omega_M)$. It follows that, 
           \begin{align*}
             \hit_{\omega_M}\maa &=\text{inf}\{\hit_{\omega_M}\map:\map\in \text{Var}(\maa)\bigcap \Supp_R(\omega_M)\}&\\ &=\text{inf}\{\hit_{M}\map:\map\in \text{Var}(\maa)\bigcap \Supp_R(\omega_M)\} &\\ &\ge \text{inf}\{\hit_{M}\map:\map\in \text{Var}(\maa)\bigcap \Supp_R(M)\}&\\&=\hit_M\maa.
           \end{align*}
          But, $$\hit_{\omega_M}\maa=\hit_{R/0:_{R}\omega_M}\big((\maa+0:_{R}\omega_{M})/0:_{R}\omega_{M}
          \big)=
          \hit_{\widehat{R}/0:_{\widehat{R}}\omega_{\widehat{M}}}
          \big((\maa\widehat{R}+0:_{\widehat{R}}\omega_{\widehat{M}})/0:_{\widehat{R}}
          \omega_{\widehat{M}}\big)=\hit_{\omega_{\widehat{M}}}\maa\widehat{R}.$$
          By the same token, $\hit_M\maa=\hit_{\widehat{M}}\maa\widehat{R}$. Hence the statement follows from the preceding inequality. Clearly, since $\Supp_R(\omega_M)\subseteq \Supp_R(M)$, so in fact the inequality,  $\hit_{\omega_M}\map\ge \hit_M\map$, is an equality for each $\map\in \Supp_R(\omega_M)$.
          
          (iv) The statement follows from the fact that $\omega_{\Mc}\cong \omega_M\tenr \Rc$ satisfies $S_2$.

      \end{proof}
    \end{lem}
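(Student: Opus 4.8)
The plan is to reduce each of the four assertions to the case where $R$ is complete. By Cohen's structure theorem (see Remark \ref{FirstRemark}), $\Rc$ is a homomorphic image of a Gorenstein local ring of the same dimension, so over the complete ring all four statements are furnished by Reminder \ref{PreliminariesCanonical}(v); the bridge back to $R$ is the isomorphism $\omega_{\Mc}\cong\omega_M\tenr\Rc$ from Definition and Remark \ref{GeneralCanonicalModuleDefinition} together with the faithfully flat local homomorphism $R\to\Rc$, and the only real work is the bookkeeping that transports $\As$, $\Supp$, $\hit$ and $\dep$ across this map.

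\textbf{(i)} I would begin from the fact, a consequence of \cite[Theorem 23.2]{Matsumura}, that $\As_R(N)=\{\maq\ins R:\maq\in\As_{\Rc}(N\tenr\Rc)\}$ for every finitely generated $N$. Applying it to $N=\omega_M$ and using $\omega_M\tenr\Rc\cong\omega_{\Mc}$ together with $\As_{\Rc}(\omega_{\Mc})=\text{Assht}_{\Rc}(\Mc)$ from Reminder \ref{PreliminariesCanonical}(v), the claim reduces to the dimension-theoretic identity $\{\maq\ins R:\maq\in\text{Assht}_{\Rc}(\Mc)\}=\text{Assht}_R(M)$. For the inclusion $\supseteq$: given $\map\in\text{Assht}_R(M)$ pick $\maq$ minimal over $\map\Rc$ with $\dime(\Rc/\maq)=\dime(\Rc/\map\Rc)=\dime(R/\map)$; then $\maq\ins R=\map$ by \cite[Theorem 9.5]{Matsumura}, one has $\maq\in\As_{\Rc}(\Mc)$, and $\dime(\Rc/\maq)=\dime(M)=\dime(\Mc)$ forces $\maq\in\text{Assht}_{\Rc}(\Mc)$. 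For $\subseteq$: for $\maq\in\text{Assht}_{\Rc}(\Mc)$ one has $\map:=\maq\ins R\in\As_R(M)$ and $\dime(R/\map)\geq\dime(\Rc/\maq)=\dime(\Mc)=\dime(M)$, hence equality. The two displayed corollaries are then immediate.

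\textbf{(ii)--(iii)} Here I would isolate the following descent observation: for any finitely generated $N$ and any $\map\in\Supp_R(N)$, choosing $\maq$ minimal over $\map\Rc$ with $\dime(\Rc/\maq)=\dime(R/\map)$ makes $\Rc_\maq$ faithfully flat over $R_\map$, so $\maq\in\Supp_{\Rc}(N\tenr\Rc)$ and, by the dimension formula \cite[Theorem A.11]{Herzog}, $\dime\big((N\tenr\Rc)_\maq\big)=\dime(N_\map)+\dime(\Rc_\maq/\map\Rc_\maq)=\dime(N_\map)$. Feeding $N=M$ and $N=\omega_M$ (and using $\omega_M\tenr\Rc=\omega_{\Mc}$) into the complete-ring description of $\Supp(\omega)$ in Reminder \ref{PreliminariesCanonical}(v), and recalling $\hit_M\map=\dime(M_\map)$, yields (ii). For (iii), in the complete case Reminder \ref{PreliminariesCanonical}(v) gives $(\omega_M)_\map\cong\omega_{M_\map}$ for $\map\in\Supp_R(\omega_M)$, whence $\hit_{\omega_M}\map=\dime_{R_\map}\big((\omega_M)_\map\big)=\dime_{R_\map}(M_\map)=\hit_M\map$; taking infima over $\Var(\maa)\ins\Supp_R(\omega_M)\subseteq\Var(\maa)\ins\Supp_R(M)$ (using (i)) gives $\hit_{\omega_M}\maa\ge\hit_M\maa$. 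To drop completeness, note $\hit_{\omega_M}\maa=\hit_{\omega_{\Mc}}\maa\Rc$ and $\hit_M\maa=\hit_{\Mc}\maa\Rc$ --- each because, for $N=\omega_M$ resp. $N=M$, the natural map $R/(0:_RN)\to\Rc/(0:_{\Rc}(N\tenr\Rc))$ is faithfully flat and preserves the height of an extended ideal --- and apply the complete case. The equality for $\map\in\Supp(\omega_M)$ is then the sandwich $\hit_{\omega_M}\map\ge\hit_M\map\ge\hit_{\omega_M}\map$, the last inequality from $\Supp(\omega_M)\subseteq\Supp(M)$. Finally, \textbf{(iv)} is immediate from the fact that $\omega_{\Mc}\cong\omega_M\tenr\Rc$ satisfies $S_2$ (Reminder \ref{PreliminariesCanonical}(v)) together with faithfully flat descent of Serre's condition along $R\to\Rc$.

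The main obstacle I anticipate is not a single deep step but the cumulative descent bookkeeping: in each part one must choose $\maq$ over $\map$ so that the relevant closed fibre is zero-dimensional, verify that faithful flatness of $\Rc_\maq$ over $R_\map$ (or of the analogous quotient rings used for $\hit$) is genuinely in force, and confirm that the invariant being computed is insensitive to completion. Each of these is routine given \cite[Theorems 9.5, 23.2]{Matsumura} and the dimension formula, but the argument only goes through if they are assembled consistently, which is where I would be most careful.
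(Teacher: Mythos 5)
Your proposal is correct and follows essentially the same route as the paper: reduction to the complete case via $\omega_{\Mc}\cong\omega_M\tenr\Rc$, the identification of associated primes under completion from \cite[Theorem 23.2]{Matsumura} combined with \cite[Lemma 1.9(c)]{SchenzelLocalCohomology} for (i), the choice of a minimal prime $\maq$ of $\map\Rc$ with zero-dimensional fibre together with the dimension formula \cite[Theorem A.11]{Herzog} for (ii) and (iii), and descent of $S_2$ for (iv). The only cosmetic difference is that you state the faithfully flat descent of Serre's condition explicitly in (iv), which the paper leaves implicit.
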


     The second part of the following lemma, which is a special case of the first part, will be used several times throughout the remainder of the paper.  It is worth pointing out that one implication of the second part of the following lemma can be deduced by \cite[Lemma 2.1]{Aoyama2}, in the case where $d'=d$.

     \begin{lem} \label{AttachPrimeDepthCanonicalModuleLemma}
        The following statements hold.
         \begin{enumerate}
             \item[(i)] Let $\map\in \Supp_R(M)$ such that $\hit_M \map \ge 3$. Suppose, furthermore, that $\map\nsubseteq \text{Z}_R(M)$. If  $\map\in \At\big(\Ht^{d'-1}_\mam(M)\big)$,  then  $\grd\big(\map\widehat{R},\omega_{\widehat{M}}\big)=2$.
             \item[(ii)] (c.f. Remark \ref{Jafari})  Suppose that $\dep_R(M)\ge 1$ and $\dime(M)\ge 3$. Then, $\mam\in \At\big(\Ht^{d'-1}_\mam(M)\big)$ if and only if $\dep_{\widehat{R}}(\omega_{\widehat{M}})=2$.
          \end{enumerate}
       \begin{proof}
          We first deal with the case where $R$ is complete.   Let $x\in \map\backslash \text{Z}_R(M)$.  Then, by Reminder \ref{PreliminariesCanonical}(v), $x\in \map\backslash \text{Z}_{R}(\omega_{M})$. The fact that $\map\in \At_R\big(\Ht^{d'-1}_\mam(M)\big)$ in conjunction with  \cite[7.2.5. Exercise]{LocalCohomology} implies that, $\map=0:_R\big(\Ht^{d'-1}_\mam(M)/A\big)$ for some submodule $A$ of $\Ht^{d'-1}_\mam(M)$. Hence we get an exact sequence, $$\Ht^{d'-1}_\mam(M)/x\Ht^{d'-1}_\mam(M)\rightarrow \Ht^{d'-1}_\mam(M)/A\rightarrow 0,$$ because $x$ belongs to $\map$. Thus, $\map/xR\in \At_{R/xR}\big(\Ht^{d'-1}_\mam(M)/x\Ht^{d'-1}_\mam(M)\big).$ Consequently, by virtue of \cite[10.2.20 Corollary]{LocalCohomology}, we can conclude that, $p/xR\in \As_{R/xR}\Big(\big(\Ht^{d'-1}_\mam(M)/x\Ht^{d'-1}_\mam(M)\big)^\vee\Big)$. It turns out that, 
          \begin{equation}
            \label{ZeroGradeSoNonZeroHom}
            \homm_{R/xR}\Big(R/\map,\big(\Ht^{d'-1}_\mam(M)/x\Ht^{d'-1}_\mam(M)\big)^\vee\Big)\neq 0.
          \end{equation}  
          
          We have,  $\dime_{R_\map/xR_\map}(M_\map/xM_\map)=\dime_{R_\map}(M_\map)-1\ge 2$. Therefore Lemma \ref{CanonicalModuleAndModuleHeghtInequality}(iii) yields, $$\hit_{\omega_{M/xM}}(\map/xR)\ge \hit_{M/xM}(\map/x R)\ge 2.$$ In particular, since $\omega_{M/xM}$ is $S_2$ as $R/xR$-module so using Reminder \ref{PreliminariesCanonical}(iii) we get, $$\grd_{R/xR}\big(\map/xR,\omega_{M/xM}\big)\ge \min\{2,\hit_{\omega_{M/xM}}(\map/xR)\}\ge 2.$$ It turns out that,
            \begin{equation} 
               \label{MinimumTwoGradeZeroExt}
               \homm_{R/xR}(R/\map,\omega_{M/xM})=\ext^1_{R/xR}(R/\map,\omega_{M/xM})=0
            \end{equation}
            Now, using (\ref{ZeroGradeSoNonZeroHom}) and (\ref{MinimumTwoGradeZeroExt}), in light of the exact sequence (\ref{SecondExactSequence}) of Remark \ref{FirstRemark}, we can deduce that, $$\ext^1_{R/xR}(R/\map,\omega_{M}/x\omega_{M})\neq 0.$$
            Consequently, $\grd_{R/xR}(\map/xR,\omega_{M}/x\omega_{M})\le 1$, i.e. $\grd_{R}(\map,\omega_{M})\le 2$. On the other hand, $\hit_{\omega_{M}}\map\ge  \hit_{M}\map\ge 3$ which implies that, $\grd_{R}(\map,\omega_{M})\ge \min\{2,\hit_{\omega_M}\map\}\ge 2$. So the statement holds in this case.
            
              In the general case, in light of \cite[11.2.10 Exercise]{LocalCohomology}, there exists $\maq\in \At_{\Rc}\big(\Ht^{d'-1}_{\mam\Rc}(\Mc)\big)$ with $\map=\maq\ins R$. We have, $\maq\in \Supp_{\Rc}(M\tenr \Rc)$, $\maq\nsubseteq \text{Z}_{\Rc}(M\tenr \Rc)$ and $3\le \hit_M\map\le \hit_{\Mc}\maq$. Then the above argument shows that $\grd_{\Rc}(\maq,\omega_{\Mc})=2$ and thence $\grd_{\Rc}(\map\widehat{R},\omega_{\Mc})\le 2$. This  completes the proof because by Lemma \ref{CanonicalModuleAndModuleHeghtInequality}(iii) and (iv) we have, $\hit_{\omega_{\Mc}}(\map\widehat{R})\ge \hit_{\Mc}(\map\widehat{R})=\hit_M(\map)\ge 3$ and $\omega_{\Mc}$ is $S_2$.
            
            (ii) We may and we do assume that $R$ is complete. One implication follows from the preceding part. Note that, $$\mam/xR\in \As_{R/xR}\Big(\big(\Ht^{d'-1}_\mam(M)/x\Ht^{d'-1}_\mam(M)\big)^\vee\Big),$$ if and only if, $\homm_{R/xR}\Big(R/\mam,\big(\Ht^{d'-1}_\mam(M)/x\Ht^{d'-1}_\mam(M)\big)^\vee\Big)\neq 0$. Hence we can conclude the reverse implication, also, by tracing back the proof of part (i).

       \end{proof}
     \end{lem}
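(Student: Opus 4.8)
The plan is to reduce everything to the case where $R$ is complete and then to squeeze the conclusion out of the short exact sequence (\ref{SecondExactSequence}) of Remark \ref{FirstRemark}, which for an $M$-regular element $x$ reads $0\to\omega_{\Mc}/x\omega_{\Mc}\to\omega_{\Mc/x\Mc}\to C\to 0$ with $C\cong\big(\Ht^{d'-1}_\mam(M)/x\Ht^{d'-1}_\mam(M)\big)^\vee$. The mechanism I expect to use is: membership of a prime in $\At\big(\Ht^{d'-1}_\mam(M)\big)$ becomes, after killing a well-chosen $x$, the non-vanishing of a $\homm$ into $C$; the Serre condition $S_2$ on the canonical module of $M/xM$ kills both $\homm$ and $\ext^1$ of $R/\map$ against $\omega_{\Mc/x\Mc}$; and then the long exact sequence attached to the displayed short exact sequence transfers the non-vanishing to $\ext^1$ against $\omega_{\Mc}/x\omega_{\Mc}$, which is precisely the assertion that the $\omega_{\Mc}$-grade of $\map$ has dropped to $2$.

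For part (i), assuming first that $R$ is complete, I would pick $x\in\map\backslash\text{Z}_R(M)$ --- possible since $\map\nsubseteq\text{Z}_R(M)$ --- which by Reminder \ref{PreliminariesCanonical}(v) is also $\omega_M$-regular. Using \cite[7.2.5. Exercise]{LocalCohomology} write $\map=0:_R\big(\Ht^{d'-1}_\mam(M)/A\big)$; since $x\in\map$ this gives a surjection $\Ht^{d'-1}_\mam(M)/x\Ht^{d'-1}_\mam(M)\twoheadrightarrow\Ht^{d'-1}_\mam(M)/A$, so $\map/xR\in\At_{R/xR}\big(\Ht^{d'-1}_\mam(M)/x\Ht^{d'-1}_\mam(M)\big)$, and by \cite[10.2.20 Corollary]{LocalCohomology} (Matlis duality over $R/xR$) we get $\homm_{R/xR}(R/\map,C)\neq 0$. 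Since $\hit_M\map\ge 3$ forces $\hit_{M/xM}(\map/xR)\ge 2$, Lemma \ref{CanonicalModuleAndModuleHeghtInequality}(iii) gives $\hit_{\omega_{M/xM}}(\map/xR)\ge 2$, and, $\omega_{M/xM}$ being $S_2$ by Lemma \ref{CanonicalModuleAndModuleHeghtInequality}(iv), Reminder \ref{PreliminariesCanonical}(iii) yields $\grd_{R/xR}(\map/xR,\omega_{M/xM})\ge 2$, i.e.\ $\homm_{R/xR}(R/\map,\omega_{M/xM})=\ext^1_{R/xR}(R/\map,\omega_{M/xM})=0$. Feeding $0\to\omega_M/x\omega_M\to\omega_{M/xM}\to C\to 0$ into $\homm_{R/xR}(R/\map,-)$ then forces $\ext^1_{R/xR}(R/\map,\omega_M/x\omega_M)\cong\homm_{R/xR}(R/\map,C)\neq 0$, so $\grd_R(\map,\omega_M)\le 2$; conversely $\hit_{\omega_M}\map\ge\hit_M\map\ge 3$ together with the $S_2$ property of $\omega_M$ gives $\grd_R(\map,\omega_M)\ge 2$, hence equality.

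To drop the completeness assumption in part (i), I would use \cite[11.2.10 Exercise]{LocalCohomology} to choose $\maq\in\At_{\Rc}\big(\Ht^{d'-1}_{\mam\Rc}(\Mc)\big)$ with $\maq\ins R=\map$, check that the hypotheses survive --- $\maq\nsubseteq\text{Z}_{\Rc}(\Mc)$, since otherwise $\map$ would sit inside an associated prime of $M$; and $3\le\hit_M\map\le\hit_{\Mc}\maq$ by faithful flatness of $\Rc_\maq$ over $R_\map$ --- apply the complete case to get $\grd_{\Rc}(\maq,\omega_{\Mc})=2$, hence $\grd_{\Rc}(\map\Rc,\omega_{\Mc})\le 2$, and conclude $\ge 2$ from $\hit_{\omega_{\Mc}}(\map\Rc)\ge\hit_{\Mc}(\map\Rc)=\hit_M\map\ge 3$ (Lemma \ref{CanonicalModuleAndModuleHeghtInequality}(iii)) and $\omega_{\Mc}$ being $S_2$. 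Part (ii) is then the case $\map=\mam$: here $\dep_R(M)\ge 1$ says exactly $\mam\nsubseteq\text{Z}_R(M)$ and $\dime(M)\ge 3$ says $\hit_M\mam\ge 3$, so the forward implication is immediate from part (i), and for the converse I would, after reducing to $R$ complete, run the same chain of implications backwards --- at $\map=\mam$ every step is an equivalence, because $\homm_{R/xR}(R/\mam,-)\neq 0$ is equivalent to $\mam\in\As_{R/xR}(-)$, which over the complete ring $R/xR$ translates (Matlis duality) into membership of $\mam$ in $\At_{R/xR}$ of the predual.

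The hard part is not any single deep step but the bookkeeping: one must notice that the passage $\map\in\At\big(\Ht^{d'-1}_\mam(M)\big)\Rightarrow\map/xR\in\At_{R/xR}\big(\Ht^{d'-1}_\mam(M)/x\Ht^{d'-1}_\mam(M)\big)$ is only an implication in general (which is all part (i) needs), and that the converse of part (ii) survives precisely because for the maximal ideal the relevant $\As$-to-$\At$ dictionary is two-sided. A second delicate point is that no $S_2$ hypothesis is placed on $M$ itself, so every lower bound $\grd\ge 2$ must be extracted from the $S_2$-ness of the various canonical modules together with the height comparisons $\hit_{\omega_N}\maa\ge\hit_N\maa$ of Lemma \ref{CanonicalModuleAndModuleHeghtInequality}(iii), being careful that this is an equality only on $\Supp(\omega_N)$.
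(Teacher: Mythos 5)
Your proposal is correct and follows essentially the same route as the paper's own proof: reduce to the complete case, pass to $R/xR$ for an $M$-regular $x\in\map$, convert attached-prime membership into $\homm_{R/xR}(R/\map,C)\neq 0$ via Matlis duality, kill $\homm$ and $\ext^1$ against $\omega_{M/xM}$ using its $S_2$-property and the height bound, and read off $\ext^1_{R/xR}(R/\map,\omega_M/x\omega_M)\neq 0$ from the sequence $0\to\omega_M/x\omega_M\to\omega_{M/xM}\to C\to 0$, with the identical descent to the non-complete case and the same ``trace back, noting the two-sided dictionary at $\mam$'' argument for the converse of (ii). No gaps.
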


     \begin{rem} 
        It is, perhaps, worth pointing out that the reverse of the first part of the preceding lemma does not hold in general. For example, assume that $R$ is a  quasi-Gorenstein Buchsbaum non-Cohen-Macaulay ring of dimension 4 and depth 2 (see  \cite[Lemma 2.2.]{Hermann} or the Segre product ring given in the introduction). Let $\map\in \ts(R)$ with $\hit(\map)=3$. $R$ is $S_2$ and $\hit_R(\map)\ge 3$, hence by  Reminder \ref{PreliminariesCanonical}(iii) we have $2\le\grd_R(\map,R)\le \dep(R)=2$. So, $\grd_{\Rc}(\map\Rc,\Rc)=\grd_{R}(\map,R)=2$. But  $\map\notin \At_R\big(\Ht^{d-1}_\mam(R)\big)$, because $\Ht^{d-1}_\mam(R)$ has finite length.  Also, it seems that we can not deduce Lemma \ref{AttachPrimeDepthCanonicalModuleLemma}(i)  from Lemma \ref{AttachPrimeDepthCanonicalModuleLemma}(ii) by localization, because we shall need the further assumption, $\hit_M\map+\dim (R/\map)=\dim (M),$ which is not assumed.

     \end{rem}
     \begin{rem} \label{Jafari}This remark is due to  Raheleh Jafari. In fact, what follows inspired us to prove the preceding lemma, as a slight generalization.  Set, $M^0:=\dsuml_{\substack{\map\in \Supp_R(M),\\ \hit_M\map=0}}M_\map$. Define the map $\partial^{-1}_M:M\rightarrow M^0$ such that its projection onto $M_\map$ is the natural localization map for each $\map\in \Supp(M)$ with $\hit_M\map=0$. Assume, inductively, that $M^i$ and $\partial^{i-1}_M:M^{i-1}\rightarrow M^i$ are constructed. Then set, $M^{i+1}:=\dsuml_{\substack{\map\in \Supp_R(M),\\ \hit_M\map=i+1}}\big(\text{Coker}(\partial^{i-1}_M)\big)_\map,$ and $\partial^{i}_M:M^i\rightarrow M^{i+1}$ to be the following map. For $m\in M^i$ and $\map\in \Supp_R(M)$ with $\hit_M(\map)=i+1$, the component of $\partial^i_M(m)$ in $\big(\text{Coker}(\partial^{i-1}_M)\big)_\map$ is $\overline{m}/1$, where, $-:M^i\rightarrow \text{Coker}(\partial^{i-1}_M)$ is the natural map. Then, we get the complex,
       $$\mathcal{C}(M):=0\rightarrow M\overset{\partial^{-1}_M}{\rightarrow}M^0\overset{\partial^{0}_M}{\rightarrow}M^1\ldots\overset{\partial^{i-2}_M}{\rightarrow} M^{i-1} \overset{\partial^{i-1}_M}{\rightarrow}M^i\overset{\partial^{i+1}_M}{\rightarrow}\ldots,$$
       which is called the Cousin complex of $M$ with respect to the hight filtration.  The  Cousin complex is introduced and investigated firstly in \cite{Cousin} as the Commutative Algebraic analogue of the Cousin complex introduced by A. Grothendieck  and R. Hartshorne \cite[Chapter IV]{Hartshorne}.  Denote by $\mathcal{H}^i_M$ the $i$-th cohomology of the Cousin complex $\mathcal{C}(M)$.
       
        Now assume, additionally, that $M$ is $S_2$ and formally equidimensional and the formal fibers of $R$ are Cohen-Macaulay.  Then by virtue of   \cite[Theorem 2.4]{Dibaei} we have $\mathcal{H}^{d'-2}_M\neq 0$  if and only if $ \Ht^{2}_{\mam\Rc}(\omega_{\Mc})\neq 0$. Moreover  in spite of  \cite[Lemma 2.2.(iii)]{Jafari} and \cite[Theorem 2.1.]{Dibaei}   we have $\mam\in\At_R\big(\Ht^{d'-1}_R(M)\big)$ if and only if $\mathcal{H}^{d'-2}_M\neq 0$. This, again, proves   Lemma \ref{AttachPrimeDepthCanonicalModuleLemma}(ii) in the case where $M$ is $S_2$ and formally equidimensional and formal fibers of $R$ are Cohen-Macaulay.
     \end{rem}
     
     The following lemma is used in the proof of the second part of Theorem \ref{AttPrimeSnLocalCohomology}.

    \begin{lem} \label{Aoyama's result} 
      Assume that $M$ is not Cohen-Macaulay  and that $M$ has a canonical module $\omega_M$. Put, $$s:=\max\{i:i<d' \text{\ and\ }\Ht^i_\mam(M)\neq 0\}.$$ If $\dep_{\Rc}\big(\Ht^s_\mam(M)^\vee\big)=0$, then,
       \begin{center}
          $\dep(\omega_M)=\begin{cases}
            d', & s=0\\
            d'-s+1, & s>0
          \end{cases}.$
       \end{center}
       \begin{proof}
          It suffices to prove the claim in the complete case, so we assume that $R$ is complete. Set, $R':=R/0:_RM$. Recall that since, $0:_RM\subseteq 0:_R\omega_M$, so both of $M$ and $\omega_M$ are $R'$-modules. Furthermore, $\omega_M$ is also the canonical module of $M$ over $R'$. In addition,  the numbers $s$, $\dep(\omega_M)$ and $\dep\big(\Ht^s_\mam(M)^\vee\big)$ are independent of taking the base ring to be either $R$ or $R'$. Thus, without loss of generality, we can presume additionally that $\dime(R)=\dime(M)$ and that $R$ also has a canonical module (since $R$ is assumed to be complete). So the statement follows from  \cite[Lemma 2.1]{Aoyama2}.
       \end{proof}
    \end{lem}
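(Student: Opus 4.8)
The plan is to descend, in two steps, to a setting covered by a known structural result of Aoyama, namely \cite[Lemma 2.1]{Aoyama2}, which computes the depth of the canonical module of a non-Cohen-Macaulay module of maximal dimension in precisely the terms appearing in the statement. First I would pass to the $\mam$-adic completion, and then replace the (now complete) ring by its quotient modulo $0:_RM$, so as to reach a complete local ring whose dimension equals $d'$ and which carries a canonical module of its own; there the asserted formula for $\dep(\omega_M)$ is \cite[Lemma 2.1]{Aoyama2} applied to $M$ (that lemma being available once $\dime(R)=\dime(M)$, which the reduction arranges).

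For the first reduction, every ingredient of the statement is insensitive to completion: $\omega_{\widehat{M}}\cong\omega_M\tenr\widehat{R}$ by Definition and Remark \ref{GeneralCanonicalModuleDefinition}, so $\dep_R(\omega_M)=\dep_{\widehat{R}}(\omega_{\widehat{M}})$; the local cohomology modules are unchanged, $\Ht^i_\mam(M)\cong\Ht^i_{\mam\widehat{R}}(\widehat{M})$, hence $s$ is unchanged, and so are their Matlis duals (which are already $\widehat{R}$-modules) and the condition $\dep_{\widehat{R}}(\Ht^s_\mam(M)^\vee)=0$. So we may assume $R$ complete. For the second reduction, put $R':=R/0:_RM$, so that $\dime(R')=\dime(M)=d'$ and, $R'$ being complete, it has a canonical module by Reminder \ref{PreliminariesCanonical}(v). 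Since $\Supp_R(\omega_M)\subseteq\Supp_R(M)$ by Lemma \ref{CanonicalModuleAndModuleHeghtInequality}(i), we have $0:_RM\subseteq 0:_R\omega_M$, so that $M$ and $\omega_M$ are both $R'$-modules. The one point that genuinely needs checking --- and it is routine --- is that this base change alters nothing: $\Ht^i_\mam$ of an $R'$-module is the same computed over $R$ as over $R'$; the $R'$-Matlis dual of an $R'$-module agrees with its $R$-Matlis dual, because $\Et_{R'}(R'/\mam R')\cong\homm_R(R',\Et(R/\mam))$, so $\omega_M$ remains the canonical module of $M$ over $R'$ and $\dep(\Ht^s_\mam(M)^\vee)=0$ is literally the same hypothesis over $R'$; and $\dep$ does not depend on whether one works over $R$ or over $R'$. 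Thus we may assume $R$ complete, $\dime(R)=d'=\dime(M)$, and $R$ itself has a canonical module.

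In this normalized setting the conclusion is exactly \cite[Lemma 2.1]{Aoyama2}. If one prefers to avoid citing it, one can transcribe its proof --- equivalently, that of \cite[Lemma 1]{Aoyama3} --- with $R$, $\dime(R)$ and $\omega_R$ replaced throughout by $M$, $\dime(M)$ and $\omega_M$: the case $s=0$ is immediate, since then $M/\Gamma_\mam(M)$ is Cohen-Macaulay of dimension $d'$ with $\omega_M\cong\omega_{M/\Gamma_\mam(M)}$, so that $\dep(\omega_M)=d'$; and for $s>0$ one picks an $M$-regular element $x$ (automatically $\omega_M$-regular, by Reminder \ref{PreliminariesCanonical}(v)), passes to $M/\Gamma_\mam(M)$ to arrange positive depth, and argues by induction on $d'-s$ using the change-of-ring exact sequence (\ref{SecondExactSequence}) of Remark \ref{FirstRemark}, the hypothesis $\dep(\Ht^s_\mam(M)^\vee)=0$ being exactly what promotes the resulting depth inequalities to the claimed equality. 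In either approach the genuine content is slight, so I expect the only real labour --- hence the main, rather modest, obstacle --- to be the bookkeeping around the two base changes, i.e. ensuring that ``$M$ has a canonical module'' and each of the three depth invariants mean literally the same thing over $R$, over $\widehat{R}$, and over $R/0:_RM$.
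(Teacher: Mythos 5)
Your proposal is correct and coincides with the paper's own argument: both reduce via completion and passage to $R/0:_RM$ to the case where $R$ is complete with $\dime(R)=\dime(M)$, check that $s$, $\dep(\omega_M)$ and $\dep\big(\Ht^s_\mam(M)^\vee\big)$ are unaffected by these base changes, and then invoke \cite[Lemma 2.1]{Aoyama2}. The extra sketch of how one would transcribe Aoyama's proof is optional colour but not needed.
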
 
    
     In the  following theorem we give an explicit description of the attached prime ideals of some of  local cohomology modules,  namely   the first non-zero local cohomology supported at $\mam$ starting from the point $\dime(M)-1$.  We also, as an application,  reprove  \cite[Theorem 1.15]{SchenzelLocalCohomology} (In the case where $R$ is not necessarily  homomorphic image of a Gorenstein ring and our proof is quite different).  Clearly we may replace $\omega_R$ with $R$ in the following theorem provided $R$ is quasi-Gorenstein. Furthermore, the first part of the next theorem shall be used in the proof of the Corollary \ref{S_2Characterization}(i).
     
     \begin{thm}\label{AttPrimeSnLocalCohomology} 
       Assume that $M$ is formally equidimensional and $S_2$ and that $M$ has a canonical module $\omega_M$.  Then the following statements hold.
         \begin{enumerate}
           \item[(i)] Suppose that the formal fibers of $R$ are $S_3$\footnote{Note that all formal fibers of a homomorphic image of a Cohen-Macaulay ring are Cohen-Macaulay}. Then we have, $$\At_R\big(\Ht^{d'-1}_\mam(M)\big)=\{\map\in \ts(R):\dep_{R_\map}\big((\omega_M)_\map\big)=2,\ \text{and,\ }\hit_{\omega_M}\map\ge 3\}.$$
           \item[(ii)] Assume that, $n\ge 2$ and that formal fibers of $R$ satisfy the $S_{n+1}$ condition. Then, $\omega_M$ satisfies the $S_n$-condition,  if and only if,  $\Ht^i_\mam(M)=0$, for each $d'-n+2\le i\le d'-1$. In this case, 
           $$\At\big(\Ht^{d'-n+1}_\mam(M)\big)=\{\map\in\ts(R):\dep_{R_\map}\big((\omega_M)_\map\big)=n\text{\ and\ }, \hit_{\omega_M}\map\ge n+1\}.$$
           
         \end{enumerate}
           \begin{proof}
            Note that, Since  $M$ and formal fibers of $R$ satisfies the $S_2$-condition, so $\widehat{M}$ is $S_2$ too. Since $\Mc$ is also $S_1$ so, $\min_{\Rc}(\Mc)=\As_{\Rc}(\Mc)$. Consequently, in view of the our hypothesis, $\Mc$ is unmixed. Let $\maq\in \Supp_{\Rc}(\Mc)$. Then there exists $\map\in \text{Assht}_{\Rc}(\Mc)$ such that $\map\subseteq \maq$. By Lemma \ref{CanonicalModuleAndModuleHeghtInequality}(i), we have $\map\in \As_{\Rc}(\omega_{\Mc})$. Hence, $\maq\in \Supp_{\Rc}(\omega_{\Mc})$. So the identity, 
                  \begin{center}
                    \begin{equation}
                    \label{htCodimeIneqality}
                    \hit_{\Mc}\maq+\dime(\Rc/\maq)=d',
                    \end{equation}
                  \end{center}                    
                     holds due to  Lemma \ref{CanonicalModuleAndModuleHeghtInequality}(ii).
                     
              (i) If $d'\le 2$ then $M$ is a Cohen-Macaulay $R$-module and the statement follows from Lemma \ref{CanonicalModuleAndModuleHeghtInequality}(iv). So we suppose that $d'\ge 3$. Let $\map\in \ts(R)$ such that $\dep\big((\omega_M)_\map\big)=2$ and  $\hit_{\omega_M}\map\ge 3$. 
               Then $\map\in \Supp_R(\omega_M)$ and in view of Lemma \ref{CanonicalModuleAndModuleHeghtInequality}(iii), we have $\hit_M\map=\hit_{\omega_M}\map\ge 3$.   Let, $\maq\in \min_{\widehat{R}}(\map\widehat{R})$. Then, $\maq\ins R=\map$. Using \cite[Proposition 1.2.16.(a)]{Herzog} and the fact that $\Rc_\maq$ is a faithfully flat extension of $R_\map$ we can deduce that $\dime_{R_\map}(M_\map)=\dime_{\Rc_\maq}(\Mc_\maq)$, $\dep_{R_\map}(M_\map)=\dep_{\Rc_\maq}(\Mc_\maq)$ and $\dep_{\Rc_\maq}\big((\omega_{\Mc})_\maq\big)=\dep_{R_\map}\big((\omega_M)_\map\big)$.  On the other hand, in light of   Reminder \ref{PreliminariesCanonical}(v) we get, $(\omega_{\Mc})_\maq\cong \omega_{\Mc_\maq}$ and thence by Lemma \ref{AttachPrimeDepthCanonicalModuleLemma}(ii),
                $$ \maq\widehat{R}_\maq\in\At_{\widehat{R}_\maq}\big(\Ht^{\dim(\widehat{M}_\maq)-1}_{\maq\widehat{R}_\maq}(\widehat{M}_\maq)\big).$$ 
                Thus \cite[11.2.10 Exercise (iii)]{LocalCohomology}    yields  $\map R_\map\in \At_{R_\map}\big(\Ht^{\hit_M\map-1}_{\map R_\map}(M_\map)\big).$ So the statement follows from  Lemma \ref{CanonicalModuleAndModuleHeghtInequality}(ii) and \cite[11.2.11 Exercise]{LocalCohomology}.

                  Now, conversely, assume that $\map\in \At_R\big(\Ht^{d'-1}_\mam(M)\big)$.
                  By \cite[11.2.10 Exercise (iii)]{Local Cohomology} there exists $\maq\in \widehat{R}$ such that $\maq$ lies over $\map$ and $\maq\in \At_{\widehat{R}}\big(\Ht^{d'-1}_{\mam\widehat{R}}(\widehat{M})\big)$. Hence, the identity (\ref{htCodimeIneqality}) in conjunction with \cite[11.2.8 Exercise(i)]{LocalCohomology} implies that $\maq\widehat{R}_\maq\in \At_{\widehat{R}}\big(\Ht^{\dime(\widehat{M}_\maq)-1}_{\maq\widehat{R}_\maq}(\widehat{M}_\maq)\big)$. This,   and equality \ref{htCodimeIneqality},  yields $\maq\widehat{R}_\maq\notin \As_{\widehat{R}_\maq}(\widehat{M}_\maq)$, because $\Mc$ is unmixed.
                   Next, since $\widehat{M}$ is $S_2$ and $\Mc_\maq$ is not Cohen-Macaulay, so we deduce that $\dim_{\widehat{R}_\maq}(\widehat{M}_\maq)\ge 3$. Therefore we can invoke Lemma \ref{AttachPrimeDepthCanonicalModuleLemma}(ii) to deduce that $\dep\big((\omega_{\widehat{M}})_\maq\big)=2$. Hence, $\dep(\widehat{R}_\maq/\map\widehat{R}_\maq)\le 2$,  by \cite[Proposition 1.2.16]{Herzog} (Note that, $(\omega_M)_\map\ten_{R_\map}\Rc_\maq\cong (\omega_{\Mc})_{\maq}\cong \omega_{\Mc_\maq}$). But then using the fact that $\widehat{R}_\maq/\map\widehat{R}_\maq$ is $S_3$ we can deduce that, the fiber $\widehat{R}_\maq/\map\widehat{R}_\maq$ is Cohen-Macaulay. So, it follows that $\hit_M\map\ge 3$, otherwise $M_\map$ and thence $\Mc_\maq$ is Cohen-Macaulay which is a contradiction. But then we get, $$2\le \min\{2,\hit_{\omega_M}\map\}\le \dep_{R_\map}\big((\omega_M)_\map\big)\le \dep_{\widehat{R}_\maq}\big((\omega_{\widehat{M}})_\maq\big)=2.$$
                  
              (ii)  We, firstly, argue by induction on $d'$. $\omega_M$ is $S_2$ (see Lemma \ref{CanonicalModuleAndModuleHeghtInequality}(iv)). If $d'\le 2$, then both of  $M$  and $\omega_M$ are Cohen-Macaulay and there is nothing to prove. So we assume that $d'\ge 3$. If $n=2$, then the statement follows from the preceding part. Suppose that $n>2$ and the statement has been proved for smaller values of $n$.
              
                Presume that $\omega_M$ is $S_n$. Since $\omega_M$ satisfies the Serre-condition $S_{n-1}$ also, so our inductive hypothesis implies that $\Ht^{i}_\mam(M)=0$ for each $d'-n+3\le i\le d'-1$ and that, $$\At\big(\Ht^{d'-n+2}_\mam(M)\big)=\{\map\in\ts(R):\dep_{R_\map}\big((\omega_M)_\map\big)=n-1\text{\ and\ }, \hit_{\omega_M}\map\ge n\}.$$ But  $\omega_M$ satisfies  $S_n$-condition, so the set on the right hand side of the above  identity is the empty set. Consequently, we get $\Ht^{d'-n+2}_\mam(M)=0$, as claimed. It remains to describe, $\At\big(\Ht^{d'-n+1}_\mam(M)\big)$. In the case where $d'\le n$ we have $\omega_M$ is Cohen-Macaulay and  $d'-n+1\le 1$. Since, $M$ is $S_2$ and $d'\ge 3$, $\dep_R(M)\ge 2$ and consequently $\Ht^0_\mam(M)=\Ht^1_\mam(M)=0.$ So we may assume that $d'\gneq n$, in particular $d'\gneq 3$. Let, $\map\in \At\big(\Ht^{d'-n+1}_\mam(M)\big)$. We, firstly, deal with the case that $\map\neq \mam$. There exists $\maq\in \At_{\widehat{R}}\big(\Ht^{d'-n+1}_{\mam\widehat{R}}(\widehat{M})\big)$ such that $\maq$ lies over $\map$ (see  \cite[11.2.10 Exercise(iii)]{LocalCohomology}). Then \cite[11.2.8. Exercise(i)]{LocalCohomology} together with the identity (\ref{htCodimeIneqality}) implies that $\maq\widehat{R}_\maq\in \At_{\maq\widehat{R}_\maq}\big(\Ht^{\dime_{\widehat{R}_\maq}(\widehat{M}_\maq)-n+1}_{\maq\widehat{R}_\maq}(\widehat{M}_\maq)\big)$. So, our inductive assumption on dimension implies that $\dep_{\widehat{R}_\maq}\big((\omega_{\widehat{M}})_\maq\big)=n$ and   $\dime_{\widehat{R}_\maq}\big((\omega_{\widehat{M}})_{\maq}\big)=\dime_{\widehat{R}_\maq}(\widehat{M}_\maq)\ge n+1$. Therefore, $\dep(\widehat{R}_\maq/\map\widehat{R}_\maq)\le n$ and whence the fiber $\widehat{R}_\maq/\map\widehat{R}_\maq$ is Cohen-Macaulay. If $\dime_{R_\map}\big((\omega_M)_\map\big)\le n$, then  $(\omega_M)_\map$ is Cohen-Macaulay (Recall that $\omega_M$ is assumed to be $S_n$) and hence $(\omega_{\Mc})_\maq$ is Cohen-Macaulay which is a contradiction. Hence $\dime_{R_\map}\big((\omega_M)_\map\big)\ge n+1$. 
                So, $$n\le \min\{n,\dime_{R_\map}\big((\omega_M)_\map\big)\}\le \dep_{R_\map}\big((\omega_M)_\map\big)\le \dep_{\widehat{R}_\maq}\big((\omega_{\widehat{M}})_\maq\big)=n.$$ Thus our statement holds in this case where $\map\neq \mam$.  Suppose that $\map=\mam$. Since $\dime(\omega_M)=\dime(M)\ge n+1$ so it is enough to show that $\dep_R(\omega_M)=n$. Let $x$ be an $M$-regular element. Since, $\Ht^{d'-1}_\mam(M)=0$, so the exact sequence (\ref{SecondExactSequence}) in Remark \ref{FirstRemark} yields, $\omega_{\widehat{M}}/x\omega_{\widehat{M}}\cong \omega_{\widehat{M}/x\widehat{M}}$. We have, $\Ht^{i}_{\mam\widehat{R}}\big(\widehat{M}\big)=0$ for each $d'-n+2\le i\le d'-1$ and $\mam\widehat{R}\in \At_{\widehat{R}}\big(\Ht^{d'-n+1}_{\mam\widehat{R}}(\widehat{M})\big)$. So using the exact sequence, $0\rightarrow \widehat{M}\rightarrow \widehat{M}\rightarrow \widehat{M}/x\widehat{M}\rightarrow 0$ a standard argument shows that, $\Ht^{i}_{\mam\widehat{R}}\big(\widehat{M}/x\Mc\big)=0$ for each $d'-n+2\le i\le d'-2$ and that $\mam\widehat{R}\in \At_{\widehat{R}}\big(\Ht^{d'-n+1}_{\mam\widehat{R}}(\widehat{M}/x\widehat{M})\big)$. Consequently, by   Lemma \ref{Aoyama's result}  we get $\dep_{\widehat{R}}(\omega_{\widehat{M}/x\widehat{M}})=n-1$, i.e. $\dep_{\widehat{R}}(\omega_{\widehat{M}}/x\omega_{\widehat{M}})=n-1$, i.e. $\dep_R(\omega_M)=\dep_{\widehat{R}}(\omega_{\Mc})=n$.
              
              For the reverse inclusion,  let $\map\in \ts(R)$ with $\dep_{R_\map}\big((\omega_M)_\map\big)=n$ and $\hit_{\omega_M}\map\ge n+1.$  Let $\maq\in \text{min}_{\widehat{R}}(\map\widehat{R})$. So by \cite[Proposition 1.2.16]{Herzog} and \cite[Theorem A.11]{Herzog} we have, $\dep_{\widehat{R}_\maq}\big((\omega_{\widehat{M}})_\maq\big)=n$ and $\hit_{\omega_{\widehat{M}}}\maq\ge n+1$. Let $x$ be an $M$-regular element in $\map$. By, Lemma \ref{CanonicalModuleAndModuleHeghtInequality}(i), $x$ is an $\omega_M$-regular element. Thus,  $\dep_{\widehat{R}_\maq}\big((\omega_{\widehat{M}/x\widehat{M}})_\maq\big)=n-1$ and $\hit_{\omega_{\widehat{M}/x\widehat{M}}}\maq\ge n$ (Recall that $\omega_{\Mc/x\Mc}\cong\omega_{\Mc}/x\omega_{\Mc}$). Now, since $\omega_{\widehat{M}/x\widehat{M}}$ satisfies the $S_{n-1}$-condition, so in view of our inductive hypothesis we get $\maq\in \At_{\widehat{R}}\big(\Ht^{d'-n+1}_{\mam\widehat{R}}(\widehat{M}/x\widehat{M})\big)$. Consequently,  since $\Ht^{d'-n+2}_{\mam\Rc}(\Mc)=0$, so we may deduce that $\maq\in \At_{\widehat{R}}\big(\Ht^{d'-n+1}_{\mam\widehat{R}}(\widehat{M})\big)$ which yields $\map\in \At_{R}\big(\Ht^{d'-n+1}_{\mam}(M)\big)$ due to \cite[Theorem 9.5.]{Matsumura} and \cite[11.2.10 Exercise(iii)]{LocalCohomology}.
              
                            Now, conversely, assume that,  $\Ht^i_\mam(M)=0$, for each $d'-n+2\le i\le d'-1$. Then, the inductive assumption shows that, $$\At\big(\Ht^{d'-(n-1)+1}_\mam(M)\big)=\{\map\in\ts(R):\dep_{R_\map}\big((\omega_M)_\map\big)=n-1\text{\ and\ }, \hit_{\omega_M}\map\ge n\}=\emptyset,$$ and that $\omega_M$ satisfies $S_{n-1}$. Suppose by way of contradiction that there exists $\map\in \ts(R)$ such that $\dep_{R_\map}\big((\omega_M)_\map\big)\lneq \min\{n,\hit_{\omega_M}\map\}$. Also, we have, $\dep_{R_\map}\big((\omega_M)_\map\big)\ge \min\{n-1,\hit_{\omega_M}\map\}$. These facts yield $\hit_{\omega_M}\map\ge n$ and $\dep_{R_\map}\big((\omega_M)_\map\big)=n-1$. But then $\map\in \At\big(\Ht^{d'-(n-1)+1}_\mam(M)\big)$ which is a contradiction.

           \end{proof}
     \end{thm}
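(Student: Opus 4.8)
The plan is to reduce at once to the case where $R$ is complete (all of $\At$, $\dep$, $\hit$ and the Serre conditions on formal fibers behave well under $\widehat{\phantom{m}}$) and then to prove both parts by a double induction, on $d'=\dim M$ (outer) and on $n$ (inner), with $n=2$ being exactly part (i). First I would record two structural facts. Since $M$ and the formal fibers of $R$ are $S_2$, the completion $\Mc$ is $S_2$, hence $S_1$, hence --- being formally equidimensional --- unmixed; and for every $\maq\in\Supp_{\Rc}(\Mc)$ one has $\hit_{\Mc}\maq+\dim(\Rc/\maq)=d'$, obtained by choosing an Assht-prime of $\Mc$ below $\maq$, noting via Lemma \ref{CanonicalModuleAndModuleHeghtInequality}(i) that it is associated to $\omega_{\Mc}$, so $\maq\in\Supp_{\Rc}(\omega_{\Mc})$, and applying Lemma \ref{CanonicalModuleAndModuleHeghtInequality}(ii). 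With this identity available, the exercises 11.2.8, 11.2.10 and 11.2.11 of \cite{LocalCohomology} transfer attached-prime information between $\Mc$ and its localizations $\Mc_\maq$, reindexing the local cohomology by $\dim(\Rc/\maq)$; this reindexing, together with the flat base-change formulas \cite[Proposition 1.2.16, Theorem A.11]{Herzog} and the isomorphisms $(\omega_{\Mc})_\maq\cong\omega_{\Mc_\maq}$ of Reminder \ref{PreliminariesCanonical}(v), is the workhorse of the whole argument.

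For part (i), the case $d'\le 2$ is trivial ($M$ and $\omega_M$ are then Cohen-Macaulay and both sides are empty), so I assume $d'\ge 3$. For the inclusion ``$\supseteq$'', given $\map$ with $\dep_{R_\map}\big((\omega_M)_\map\big)=2$ and $\hit_{\omega_M}\map\ge 3$, I pick $\maq$ minimal over $\map\Rc$ with $\maq\cap R=\map$; faithful flatness of $\Rc_\maq$ over $R_\map$, the identification $(\omega_{\Mc})_\maq\cong\omega_{\Mc_\maq}$, and Lemma \ref{AttachPrimeDepthCanonicalModuleLemma}(ii) put $\maq\Rc_\maq\in\At\big(\Ht^{\dim\Mc_\maq-1}_{\maq\Rc_\maq}(\Mc_\maq)\big)$, which the reindexing exercises promote to $\map\in\At\big(\Ht^{d'-1}_\mam(M)\big)$. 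For ``$\subseteq$'', I lift $\map\in\At\big(\Ht^{d'-1}_\mam(M)\big)$ to some $\maq\in\At_{\Rc}\big(\Ht^{d'-1}_{\mam\Rc}(\Mc)\big)$ over $\map$; the height identity and 11.2.8 localize the attachment to $\maq\Rc_\maq$, unmixedness of $\Mc$ forces $\Mc_\maq$ non-Cohen-Macaulay, $S_2$-ness then forces $\dim\Mc_\maq\ge 3$, Lemma \ref{AttachPrimeDepthCanonicalModuleLemma}(ii) gives $\dep\big((\omega_{\Mc})_\maq\big)=2$, and the depth base-change formula bounds the depth of the formal fiber $\Rc_\maq/\map\Rc_\maq$ by $2$. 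Here the hypothesis that the formal fibers are $S_3$ enters: it forces this fiber to be Cohen-Macaulay of dimension $\le 2$, whence $\hit_{\omega_M}\map=\hit_M\map\ge 3$ (otherwise $\Mc_\maq$ would itself be Cohen-Macaulay) and then $\dep_{R_\map}\big((\omega_M)_\map\big)=2$.

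Part (ii) is the induction proper. The ``$\Rightarrow$'' half of the equivalence is quick: if $\omega_M$ is $S_n$ it is $S_{n-1}$, so by the $(n-1)$-case $\Ht^i_\mam(M)=0$ for $d'-n+3\le i\le d'-1$ and $\At\big(\Ht^{d'-n+2}_\mam(M)\big)$ equals a set which is empty because $\omega_M$ is $S_n$; since a nonzero local cohomology module has nonempty attached-prime set, $\Ht^{d'-n+2}_\mam(M)=0$ too, and conversely the $(n-1)$-case turns the vanishing hypothesis back into $S_n$-ness of $\omega_M$ after a short comparison of minima. For the $\At$-equality I split on $\map\neq\mam$ versus $\map=\mam$ (the sub-case $d'\le n$ being disposed of at once, as then $\Ht^{d'-n+1}_\mam(M)=0$ and the right-hand set is empty). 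The case $\map\neq\mam$ is formally identical to part (i)'s ``$\subseteq$'', now feeding the dimension-induction into $\Mc_\maq$ (still $S_2$, formally equidimensional, with canonical module $(\omega_{\Mc})_\maq$) and using the $S_{n+1}$ hypothesis on the fibers. The case $\map=\mam$ is the crux: I pick an $M$-regular --- hence, by Lemma \ref{CanonicalModuleAndModuleHeghtInequality}(i), $\omega_M$-regular --- element $x$; since $\Ht^{d'-1}_\mam(M)=0$, the exact sequence (\ref{SecondExactSequence}) of Remark \ref{FirstRemark} degenerates to $\omega_{\Mc}/x\omega_{\Mc}\cong\omega_{\Mc/x\Mc}$; a chase in the long exact sequence of $0\to\Mc\to\Mc\to\Mc/x\Mc\to 0$ shows $\Ht^i_{\mam\Rc}(\Mc/x\Mc)=0$ for $d'-n+2\le i\le d'-2$ while $\mam\Rc$ remains attached to $\Ht^{d'-n+1}_{\mam\Rc}(\Mc/x\Mc)$, so the integer $s:=\max\{i<\dim(\Mc/x\Mc):\Ht^i_{\mam\Rc}(\Mc/x\Mc)\neq 0\}$ equals $d'-n+1>0$ and $\dep_{\Rc}\big(\Ht^s_{\mam\Rc}(\Mc/x\Mc)^\vee\big)=0$; Lemma \ref{Aoyama's result} then gives $\dep(\omega_{\Mc/x\Mc})=(d'-1)-s+1=n-1$, i.e. $\dep_R(\omega_M)=n$, which is precisely the claim at $\mam$. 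The reverse inclusion of the $\At$-equality runs the same machinery backwards: localize at $\maq$ minimal over $\map\Rc$, cut by an $M$-regular $x\in\map$, use $\omega_{\Mc/x\Mc}\cong\omega_{\Mc}/x\omega_{\Mc}$ and the base-change formulas to fall under the inductive statement for $\Mc/x\Mc$ in dimension $d'-1$, then pull the attachment back up through the long exact sequence (using $\Ht^{d'-n+2}_{\mam\Rc}(\Mc)=0$) and through $\Rc_\maq\to R_\map$. I expect the $\map=\mam$ step to be the main obstacle: keeping straight which intermediate local cohomology modules vanish after cutting by $x$, pinning down $s$, and verifying the hypothesis $\dep_{\Rc}\big(\Ht^s_{\mam\Rc}(\Mc/x\Mc)^\vee\big)=0$ needed for Lemma \ref{Aoyama's result}. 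A secondary technical point --- handled using that the complete ring $\Rc$ is excellent, so its localizations have regular formal fibers and Ratliff's characterization of quasi-unmixedness is available --- is that formal equidimensionality and the existence of a canonical module survive localization and passage to $\Mc/x\Mc$.
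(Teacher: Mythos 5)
Your proposal follows essentially the same route as the paper: the same preliminary height identity $\hit_{\Mc}\maq+\dim(\Rc/\maq)=d'$ via Lemma \ref{CanonicalModuleAndModuleHeghtInequality}, the same two-inclusion argument for (i) resting on Lemma \ref{AttachPrimeDepthCanonicalModuleLemma}(ii) and the localization/lifting exercises for attached primes, and the same double induction for (ii) with the case $\map=\mam$ handled by cutting with an $M$-regular element and invoking Lemma \ref{Aoyama's result} with $s=d'-n+1$. The only cosmetic difference is that you announce a wholesale reduction to the complete case while the paper (and, in substance, your own argument) shuttles between $R$ and $\Rc$ throughout; this changes nothing essential.
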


    \section{Quasi-Gorenstein rings and regular elements}
    
   If  $x\in R\backslash \text{Z}(R)$ then \cite[Lemma 5.1]{CanonicalElements} states that $\omega_R/x\omega_R\cong \omega_{R/xR}$ if and only if $x\notin \unil_{\map\in\At_R\big(\Ht^{d-1}_\mam(R)\big)}\map$. But, as the proof\footnote{The proof is given in the paragraph before the statement of the lemma.} of the Lemma  shows us,   we strongly believe that  the author means    the following  observation: 
   
    The map, $\iota$,  of the exact sequence (\ref{SecondExactSequence}) in Remark \ref{FirstRemark} is an isomorphism if and only if $x\notin \unil_{\map\in\At_R\big(\Ht^{d-1}_\mam(R)\big)}\map$. In fact the proof does not deal with the following question:  
    
    Whether the map, $\iota$, of the exact sequence (\ref{SecondExactSequence}) in Remark \ref{FirstRemark} is an isomorphism whenever $\omega_R/x\omega_R\cong \omega_{R/xR}$? 
    
   At least in the case where $\omega_R=R$, as the following lemma shows us,  the above question has a positive answer.
    
    \begin{prop} \label{quotient_zero_ht}
       Suppose that $R$ is a quasi-Gorenstein ring. Assume that  $x\in \mam$ is a regular element of $R$. Then $R/xR$ is a quasi-Gorenstein ring if and only if $x\notin \unil_{\map\in \text{Att}_R\big(H^{d-1}_\mam(R)\big)}\map$.
         \begin{proof}
             Set $M=R$ in the exact sequence (\ref{SecondExactSequence}) in Remark \ref{FirstRemark}. Suppose that,  $x\notin \bigcup\limits_{\mathfrak{p}\in Att\big(H^{d-1}_{\mathfrak{m}}(R)\big)}\mathfrak{p}$. 
            Then $C=0$ in the exact sequence (\ref{SecondExactSequence}). Thus, $\iota$, is an isomorphism and whence $\omega_{R/xR}\cong \omega_R/x\omega_R\cong R/xR$.
          
          Now, conversely, assume that $R/xR$ is quasi-Gorenstein.  It is harmless to assume that $R$ is complete. Namely the assertion, $x\notin \unil_{\maq\in \At_{\widehat{R}}\big(\Ht^{d-1}_{\mam\widehat{R}}(\widehat{R})\big)}\maq$,  is equivalent to the assertion that $\Ht^{d-1}_{\mam\widehat{R}}(\widehat{R})/x\Ht^{d-1}_{\mam\widehat{R}}(\widehat{R})=0$. But according to  \cite[Flat Base Change Theorem 4.3.2]{LocalCohomology} and faithfully flatness of $\widehat{R}$ the latter statement  holds if and only if $H^{d-1}_{\mathfrak{m}}(R)/xH^{d-1}_{\mathfrak{m}}(R)=0$, that is to say, $x\notin \unil_{\maq\in \At\big(H^{d-1}_{\mathfrak{m}}(R)\big)}\maq$. On the other hand it is a well-known fact that a commutative local ring is quasi-Gorenstein if and only if its completion is so.
          
           Suppose by way of contradiction that $x\in \map$ for some $\map\in\At\big(\Ht^{d-1}_{\mam}(R)\big)$. Since $\Ht^{d-1}_\mam(R/xR)\cong \text{E}_{R/xR}(R/\mam)\cong 0:_{\Ht^{d}_\mam(R)}x$,  the exact sequence (\ref{FirstExactSequence}) in Remark \ref{FirstRemark} gives us an exact sequence,
          $$0\rightarrow \Ht^{d-1}_{\mam}(R)/x\Ht^{d-1}_{\mam}(R)\rightarrow \text{E}_{R/xR}(R/\mam)\overset{f}{\rightarrow}\text{E}_{R/xR}(R/\mam)\rightarrow 0.$$
        Using \cite[10.2.11 Theorem]{LocalCohomology} we can deduce that $f=s\text{id}_{R/xR}{\text{E}(R/\mam)}$ for some $s\in \widehat{R}$. Dualizing  we obtain the exact sequence,
        
        \begin{equation} \label{ThirdExactSequence}
        0\rightarrow R/xR\overset{s\text{id}_{R/xR}}\longrightarrow R/xR\rightarrow \big(\Ht^{d-1}_\mam(R)/x\Ht^{d-1}_\mam(R)\big)^\vee\rightarrow 0. 
        \end{equation}
        
        Since $\map\in \At_R\big(\Ht^{d-1}_\mam(R)\big)$, 
        so a similar argument as in the proof of Lemma \ref{AttachPrimeDepthCanonicalModuleLemma}(i) shows that $\map/xR\in \At_{R/xR}\big(\Ht^{d-1}_\mam(R)/x\Ht^{d-1}_\mam(R)\big).$ Consequently, the exact sequence  (\ref{ThirdExactSequence}) together with \cite[10.2.20 Corollary]{LocalCohomology} implies that $\map/xR\in \As_{R/xR}\big(R/(sR+xR)\big)$. Since $R/xR$ is quasi-Gorenstein so by   Reminder \ref{PreliminariesCanonical}(iv), $R/xR$ satisfies the Serre condition $S_2$. Consequently, in view of Reminder \ref{PreliminariesCanonical}(iii),  the fact that, $\map/xR\in \As_{R/xR}\big(R/(sR+xR)\big)$ yields $\hit_{R/xR}\big(\map/xR\big)=1$,
          i.e. $\hit_R(\map)=2$. Thus, $\dim(R/\map)=d-2$, by virtue of Reminder \ref{PreliminariesCanonical}(iv) and \cite[page 250, Lemma 2]{Matsumura}. Hence we have, $\map\in \At\big(\Ht^{\dim(R/\map)+1}_{\mam}(R)\big)$. Consequently  \cite[11.2.8 Exercise]{Local Cohomology} yields $\map R_\map\in \At\big(\Ht^{1}_{\map R_\map}(R_\map)\big)$. On the other hand, $R$ is $S_2$ and  $\hit_R(\map)=2$, thence $R_\map$ is Cohen-Macaulay and $\Ht^{1}_{\map R_\map}(R_\map)=0$. This contradicts with   $\map R_\map\in \At\big(\Ht^{1}_{\map R_\map}(R_\map)\big)$.

         \end{proof}
    \end{prop}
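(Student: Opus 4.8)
The plan is to use the two exact sequences of Remark~\ref{FirstRemark} with $M=R$, exploiting that $R$ quasi-Gorenstein forces $\omega_{\widehat{R}}\cong\widehat{R}$. Under this identification (\ref{SecondExactSequence}) reads $0\rightarrow\widehat{R}/x\widehat{R}\overset{\iota}{\rightarrow}\omega_{\widehat{R}/x\widehat{R}}\rightarrow C\rightarrow 0$ with $C\cong\big(\Ht^{d-1}_\mam(R)/x\Ht^{d-1}_\mam(R)\big)^\vee$, while (\ref{FirstExactSequence}) reads $0\rightarrow\Ht^{d-1}_\mam(R)/x\Ht^{d-1}_\mam(R)\rightarrow\Ht^{d-1}_\mam(R/xR)\rightarrow 0:_{\Ht^{d}_\mam(R)}x\rightarrow 0$.

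For the implication $(\Leftarrow)$, suppose $x\notin\unil_{\map\in\At_R(\Ht^{d-1}_\mam(R))}\map$. By the theory of attached primes of Artinian modules (\cite[7.2.11 Proposition]{LocalCohomology}) this means $x\Ht^{d-1}_\mam(R)=\Ht^{d-1}_\mam(R)$, so $C=0$ (the Matlis dual functor being faithful), $\iota$ is an isomorphism, and $\omega_{\widehat{R}/x\widehat{R}}\cong\widehat{R}/x\widehat{R}$; since a local ring is quasi-Gorenstein exactly when its completion is, $R/xR$ is quasi-Gorenstein.

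For $(\Rightarrow)$, I would first reduce to the complete case — by flat base change the condition on $x$ is equivalent to its analogue over $\widehat{R}$, and $R/xR$ is quasi-Gorenstein iff $\widehat{R}/x\widehat{R}$ is — and then argue by contradiction, choosing $\map\in\At_R(\Ht^{d-1}_\mam(R))$ with $x\in\map$. Since $R$ is quasi-Gorenstein, $0:_{\Ht^d_\mam(R)}x\cong\text{E}_{R/xR}(R/\mam)$, and since $R/xR$ is quasi-Gorenstein, $\Ht^{d-1}_\mam(R/xR)\cong\text{E}_{R/xR}(R/\mam)$; substituting these into (\ref{FirstExactSequence}) produces a short exact sequence $0\rightarrow\Ht^{d-1}_\mam(R)/x\Ht^{d-1}_\mam(R)\rightarrow\text{E}_{R/xR}(R/\mam)\overset{f}{\rightarrow}\text{E}_{R/xR}(R/\mam)\rightarrow 0$. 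Because the endomorphism ring of $\text{E}_{R/xR}(R/\mam)$ is $R/xR$, the surjection $f$ is multiplication by some $s\in R$; applying the Matlis dual functor gives the exact sequence (\ref{ThirdExactSequence}), $0\rightarrow R/xR\overset{s}{\rightarrow}R/xR\rightarrow\big(\Ht^{d-1}_\mam(R)/x\Ht^{d-1}_\mam(R)\big)^\vee\rightarrow 0$, so $s$ is a nonzerodivisor on $R/xR$ and $\big(\Ht^{d-1}_\mam(R)/x\Ht^{d-1}_\mam(R)\big)^\vee\cong R/(sR+xR)$.

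Since $x\in\map\in\At_R(\Ht^{d-1}_\mam(R))$, the argument used for Lemma~\ref{AttachPrimeDepthCanonicalModuleLemma}(i) gives $\map/xR\in\At_{R/xR}\big(\Ht^{d-1}_\mam(R)/x\Ht^{d-1}_\mam(R)\big)$, hence $\map/xR\in\As_{R/xR}\big(R/(sR+xR)\big)$ by \cite[10.2.20 Corollary]{LocalCohomology}. The point is now that $R/xR$ is quasi-Gorenstein, hence $S_2$, while $(s)$ is generated by an $R/xR$-regular element; so Reminder~\ref{PreliminariesCanonical}(iii) forces $\hit_{R/xR}(\map/xR)=1$, whence $\hit_R\map=2$ and $\dim(R/\map)=d-2$, using that $R$ is formally equidimensional (Reminder~\ref{PreliminariesCanonical}(iv)) together with \cite[page 250, Lemma 2]{Matsumura}. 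Therefore $\map\in\At_R\big(\Ht^{\dim(R/\map)+1}_\mam(R)\big)$, and the shifted localisation principle \cite[11.2.8 Exercise]{LocalCohomology} yields $\map R_\map\in\At_{R_\map}\big(\Ht^{1}_{\map R_\map}(R_\map)\big)$; but $R$ is $S_2$ with $\hit\map=2$, so $R_\map$ is Cohen-Macaulay of dimension $2$ and $\Ht^{1}_{\map R_\map}(R_\map)=0$, a contradiction. I expect the crux to be this last block — pinning $f$ down to a scalar $s$, checking that $s$ is a genuine nonzerodivisor on $R/xR$, and then extracting $\hit_R\map=2$ from the $S_2$ property of $R/xR$ applied to the associated prime $\map/xR$ of $R/(sR+xR)$ — since the appeal to the shifted localisation principle, and hence the whole contradiction, depends on that precise height.
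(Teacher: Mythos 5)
Your proposal is correct and follows essentially the same route as the paper's own proof: the same use of the exact sequences of Remark \ref{FirstRemark} with $C=0$ for the backward implication, and for the forward implication the same reduction to the complete case, identification of $f$ as multiplication by a scalar $s$, dualization to exhibit $\map/xR$ as an associated prime of $R/(sR+xR)$, the $S_2$ argument pinning $\hit_R\map=2$, and the shifted localization contradiction. No gaps to report.
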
    
     
     The following lemma shall be used in the proof of the corollary \ref{S_2Characterization}(ii).  
     
     \begin{lem} \label{FiniteAlgebraAndInjectiveEnvelop} Assume that $S$ is an $R$-algebra which is finitely generated as $R$-module ($S$ is, indeed, semi-local but not necessarily local). Then, $$\homm_R\big(S,\text{E}(R/\mam)\big)\cong \dsuml_{\man\in\text{Max}(S)}\text{E}(S/\man).$$
       \begin{proof}
          Recall that $S$ is a semi-local ring (see  e.g., \cite[page 69, 9.3.]{Matsumura} and \cite[page 66, Lemma 2]{Matsumura}). Taking into account an epimorphism, $R^n\rightarrow S$ we get a monomorphism $\homm_R\big(S,\text{E}(R/\mam)\big)\rightarrow \dsuml_{i=1}^n\Et(R/\mam)$. It follows that $\homm_R\big(S,\text{E}(R/\mam)\big)$ is an Artinian $R$-module and so  it is an Artinian $S$-module. But $\homm_R\big(S,\text{E}(R/\mam)\big)$ is also an injective $S$-module. Therefore $\homm_R\big(S,\text{E}(R/\mam)\big)\cong \dsuml_{\man\in \text{Max}(S)}\Et(S/\man)^{\mu(\man)}$ (see  \cite[Theorem 3.2.8]{Herzog}). We aim to prove that $\mu(\man)=1$ for each $\man\in \Max(S)$. Let, $\man\in \text{Max}(S)$. Since $S/\man$ is a finite extension of $R/\mam$ so there exists $u\in \mn$ such that $S/\man\cong \dsum\limits_{i=1}^uR/\mam$. We have,
          \begin{align*}
          \dsuml_{i=1}^{\mu(\man)u}R/\mam\cong \dsuml_{i=1}^{\mu(\man)}S/\man&\cong  \homm_S\big(S/\man,\homm_R(S,\Et(R/\mam)\big)&\\ &\cong \homm_R\big(S/\man,\Et(R/\mam)\big)&\\ &\cong \homm_R\big(\dsuml_{i=1}^{u}R/\mam,\Et(R/\mam)\big)&\\ &\cong \dsuml_{i=1}^{u}R/\mam.
          \end{align*}
          It turns out that $\mu(\man)=1$ as required.
       \end{proof}
     \end{lem}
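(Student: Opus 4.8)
The plan is to identify $N:=\homm_R\big(S,\Et(R/\mam)\big)$, endowed with its natural $S$-module structure (coming from the module $S$ in the first variable), as an \emph{injective Artinian} $S$-module, and then to pin down the multiplicities in its injective decomposition by a Matlis-duality computation.

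First I would record that $S$ is semi-local: being module-finite over $(R,\mam)$ it is integral over $R$, so every maximal ideal of $S$ contracts to $\mam$, and $S/\mam S$ is a finite-dimensional $R/\mam$-algebra, hence Artinian, hence has only finitely many maximal ideals. Next, two structural facts about $N$. It is injective as an $S$-module: restriction of scalars along $R\to S$ is exact, so its right adjoint $\homm_R(S,-)$ carries the injective $R$-module $\Et(R/\mam)$ to an injective $S$-module. It is Artinian as an $R$-module, hence as an $S$-module: a surjection $R^{n}\twoheadrightarrow S$ induces an embedding $N\hookrightarrow \homm_R\big(R^{n},\Et(R/\mam)\big)\cong \Et(R/\mam)^{n}$, and $\Et(R/\mam)$ is Artinian. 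By the structure theorem for injective modules over the Noetherian ring $S$, $N$ is a direct sum of indecomposable injectives $\Et(S/\maq)$ with $\maq\in\ts(S)$; Artinianness forces every such $\maq$ to be maximal (an Artinian module has only maximal associated primes), so $N\cong \dsuml_{\man\in\Max(S)}\Et(S/\man)^{\mu(\man)}$ for cardinals $\mu(\man)$, necessarily finite since $N$ has finite length.

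It remains to show $\mu(\man)=1$ for each $\man\in\Max(S)$. Fix $\man$ and apply $\homm_S(S/\man,-)$. On the decomposition side only the $\man$-summand contributes, and $\homm_S\big(S/\man,\Et(S/\man)\big)$ is the socle of $\Et(S/\man)$, a one-dimensional $S/\man$-vector space, so $\homm_S(S/\man,N)$ has $S/\man$-dimension $\mu(\man)$, i.e. $R/\mam$-dimension $\mu(\man)\cdot[S/\man:R/\mam]$. On the other side, the adjunction $\homm_S\big(S/\man,\homm_R(S,\Et(R/\mam))\big)\cong\homm_R\big(S/\man,\Et(R/\mam)\big)$ exhibits $\homm_S(S/\man,N)$ as the Matlis dual of $S/\man$; since $\man$ contracts to $\mam$, the residue field $S/\man$ is a finite extension of $R/\mam$, say of degree $u$, so $S/\man\cong (R/\mam)^{u}$ as $R$-modules and hence $\homm_R\big(S/\man,\Et(R/\mam)\big)\cong (R/\mam)^{u}$. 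Comparing $R/\mam$-dimensions gives $\mu(\man)\cdot u=u$, whence $\mu(\man)=1$.

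There is no genuine obstacle here; the one point requiring care is the bookkeeping of the module structures transported along the adjunction and the injective-decomposition isomorphisms — in particular that the $S$-action on $\homm_R(S/\man,\Et(R/\mam))$ factors through $S/\man$ — so that the final dimension count may be performed consistently over $R/\mam$.
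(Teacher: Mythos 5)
Your proposal is correct and follows essentially the same route as the paper's own proof: establish that $\homm_R\big(S,\Et(R/\mam)\big)$ is an Artinian injective $S$-module via the surjection $R^n\twoheadrightarrow S$ and the adjunction, decompose it as $\dsuml_{\man\in\Max(S)}\Et(S/\man)^{\mu(\man)}$, and then pin down $\mu(\man)=1$ by applying $\homm_S(S/\man,-)$ and comparing $R/\mam$-dimensions using $S/\man\cong(R/\mam)^u$. The only difference is that you spell out a few routine points (semi-locality, why the associated primes are maximal) that the paper cites or leaves implicit.
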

     
     The subsequent lemma is required for the next corollary.
     
     \begin{lem}\label{LiftingPropertyOfIsomorphisms}
       Let $S$ be an $R$-algebra which is  finitely generated as $R$-module. The following statements hold.
         \begin{enumerate}
           \item[(i)] Assume that, the natural map $\varphi:\Ht^d_\mam(R)\rightarrow \Ht^d_\mam(R)\tenr S$,  given by $\alpha\mapsto \alpha\ten 1$ is an $R$-isomorphism. Then  given an $S$-module $N$ and a map $f:\Ht^d_{\mam S}(S)\rightarrow \Ht^d_{\mam S}(N)$ we have  $f$ is an $R$-homomorphism if and only if it is an $S$-homomorphism.
           \item[(ii)] Suppose that $R$ has a canonical module $\omega_R$. For each $d$-dimensional finitely generated $S$-module $N$ the well-known $R$-isomorphism, $$\homm_R\big(\homm_R(N,\omega_R),\text{E}(R/\mam)\big)\cong \Ht^d_{\mam S}(N),$$ is an $S$-isomorphism too.
         \end{enumerate}
         \begin{proof}
           (i) For each finite $S$-module $N$ by representing  $\Ht^d_\mam(R)$, (respectively $\Ht^d_{\mam S}(N)$) as the last cohomology of the \quad \v{C}ech complex $C^\bullet(\mathbf{x},R)$ (respectively, $C^\bullet(\mathbf{x}S,N)$) for some system of parameters $\mathbf{x}$ for $R$, it is easily seen that the natural $R$-isomorphism, $$\psi_N:\Ht^d_\mam(R)\tenr N\rightarrow \Ht^d_{\mam S}(N),$$ which takes an element, $[r/\mathbf{x}^\alpha]\tenr n\in \Ht^d_\mam(R)\tenr N$,  to, $[rn/\mathbf{x}^\alpha]$, is an $S$-isomorphism too. According to our hypothesis, we have the $R$-isomorphism, $\psi_S\circ\varphi:\Ht^d_\mam(R)\rightarrow \Ht^d_{\mam S}(S)$. Therefore, 
             \begin{align*}
               \homm_S\big(\Ht^d_{\mam S}(S),\Ht^d_{\mam S}(N)\big)&\overset{\homm(\psi_S,\text{id})}{\cong} \homm_S\big(\Ht^d_{\mam }(R)\tenr S,\Ht^d_{\mam S}(N)\big)\cong \homm_R\Big(\Ht^d_{\mam }(R),\homm_S\big(S,\Ht^d_{\mam S}(N)\big)\Big)&\\ & \ \ \ \ \ \ \ \cong\ \homm_R\big(\Ht^d_{\mam }(R),\Ht^d_{\mam S}(N)\big) \overset{\homm(\varphi^{-1}\circ\psi_S^{-1},\text{id})}{\cong} \homm_R\big(\Ht^d_{\mam S}(S),\Ht^d_{\mam S}(N)\big).
             \end{align*}
             Let us to denote the composition of the above isomorphisms with $\lambda$. It is easily seen that $\lambda(f)=f$ for each $f\in \homm_S\big(\Ht^d_{\mam S}(S),\Ht^d_{\mam S}(N)\big)$. This fact together with the surjectivity of $\lambda$ proves the claim.
             
             (ii) Let $N$ be a finitely generated $S$-module. As we have seen just in the previous part  there exists a natural $S$-isomorphism, $\psi_N:\Ht^d_\mam(R)\tenr N\rightarrow \Ht^d_{\mam S}(N).$ Hence we have the following chain  of $S$-isomorphisms,
               $$ \homm_R\big(\homm_R(N,\omega_R),\text{E}(R/\mam)\big)\cong \homm_R\big(\omega_R,\text{E}(R/\mam)\big)\tenr N\cong \Ht^d_\mam(R)\tenr N\cong \Ht^d_{\mam S}(N).$$ 
             (The reader could easily verify that the first isomorphism is an $S$-isomorphisms too)
         \end{proof}
     \end{lem}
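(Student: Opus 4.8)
\emph{Proof strategy.} Both parts rest on an explicit \v{C}ech description of top local cohomology. First I would fix a system of parameters $\mathbf{x}=x_1,\ldots,x_d$ of $R$. For a finitely generated $S$-module $N$, the \v{C}ech complex $C^\bullet(\mathbf{x},R)\tenr N$ is literally the \v{C}ech complex of $N$ on the images of $x_1,\ldots,x_d$ in $S$, and since $(x_1,\ldots,x_d)S$ and $\mam S$ have the same radical it computes both $\Ht^\bullet_\mam(N)$ (over $R$) and $\Ht^\bullet_{\mam S}(N)$ (over $S$); taking the degree-$d$ cohomology, which is a cokernel and hence commutes with $-\tenr N$, yields a natural isomorphism
$$\psi_N\colon\Ht^d_\mam(R)\tenr N\longrightarrow\Ht^d_{\mam S}(N),\qquad [r/\mathbf{x}^\alpha]\tenr n\longmapsto [rn/\mathbf{x}^\alpha].$$
Because the fraction formula is compatible with the $S$-action on $N$, this $\psi_N$ is an $S$-isomorphism, not merely an $R$-isomorphism; establishing this is the shared first step.

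For part (i) I would compose $\psi_S$ with the hypothesised $R$-isomorphism $\varphi$ to get an $R$-isomorphism $\psi_S\circ\varphi\colon\Ht^d_\mam(R)\overset{\cong}{\longrightarrow}\Ht^d_{\mam S}(S)$, and then run the chain of canonical isomorphisms
\begin{align*}
\homm_S\big(\Ht^d_{\mam S}(S),\Ht^d_{\mam S}(N)\big)&\cong\homm_S\big(\Ht^d_\mam(R)\tenr S,\Ht^d_{\mam S}(N)\big)\\
&\cong\homm_R\Big(\Ht^d_\mam(R),\homm_S\big(S,\Ht^d_{\mam S}(N)\big)\Big)\\
&\cong\homm_R\big(\Ht^d_\mam(R),\Ht^d_{\mam S}(N)\big)\cong\homm_R\big(\Ht^d_{\mam S}(S),\Ht^d_{\mam S}(N)\big),
\end{align*}
using in turn $\psi_S$, the extension/restriction adjunction for $R\to S$, the identity $\homm_S(S,-)=\text{id}$, and $\psi_S\circ\varphi$ once more. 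This gives an isomorphism $\lambda$ from the group of $S$-linear maps $\Ht^d_{\mam S}(S)\to\Ht^d_{\mam S}(N)$ onto the group of $R$-linear ones. The heart of the argument is to unwind the four isomorphisms and observe that $\lambda$ is simply the tautological inclusion, i.e.\ $\lambda(f)=f$ for every $S$-linear $f$. Once this is checked, surjectivity of $\lambda$ tells us that an arbitrary $R$-linear $g$ has the form $g=\lambda(f)=f$ with $f$ $S$-linear, so $g$ is $S$-linear; the reverse implication is trivial, which completes (i).

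For part (ii) I would begin from the standard Hom-evaluation isomorphism, valid since $N$ is finitely presented over the Noetherian ring $R$, namely $\homm_R\big(\homm_R(N,\omega_R),\Et(R/\mam)\big)\cong N\tenr\homm_R\big(\omega_R,\Et(R/\mam)\big)$. Combining it with the defining isomorphism $\homm_R\big(\omega_R,\Et(R/\mam)\big)\cong\Ht^d_\mam(R)$ of the canonical module and then with $\psi_N$ yields
$$\homm_R\big(\homm_R(N,\omega_R),\Et(R/\mam)\big)\cong N\tenr\Ht^d_\mam(R)\cong\Ht^d_\mam(R)\tenr N\cong\Ht^d_{\mam S}(N),$$
which is the classical $R$-isomorphism. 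To see that it is in fact an $S$-isomorphism it suffices to note that each arrow respects the $S$-module structure carried over from $N$: the Hom-evaluation map is natural in $N$ and hence $S$-linear, commutativity of $\tenr$ is $S$-linear, and $\psi_N$ was already seen to be $S$-linear; so the composite is an $S$-isomorphism.

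The main obstacle I expect is bookkeeping rather than any new idea: in part (i) one has to genuinely trace the four natural isomorphisms and confirm that their composite fixes the $S$-linear maps (so that ``$R$-linear'' is forced to agree with ``$S$-linear''), and throughout one must make sure the \v{C}ech presentation produces $\Ht^d_{\mam S}(N)$ itself, together with a genuinely $S$-linear and natural $\psi_N$, rather than only an abstract $R$-linear isomorphism. Everything else is formal diagram-chasing.
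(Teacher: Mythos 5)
Your proposal is correct and follows essentially the same route as the paper: the $S$-linear \v{C}ech isomorphism $\psi_N$, the four-step chain of Hom-isomorphisms whose composite $\lambda$ fixes $S$-linear maps, and for (ii) the Hom-evaluation isomorphism combined with $\omega_R^\vee\cong \Ht^d_\mam(R)$ and $\psi_N$. No substantive differences to report.
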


     \begin{cor} \label{S_2Characterization} Assume that $R$ is a quasi-Gorenstein ring and $x\in \mam$ is a regular element. Then the following statements hold.
     \begin{enumerate}
       \item[(i)] A necessary and sufficient condition for $R/xR$ to be quasi-Gorenstein is that $R/xR$ satisfies the $S_2$ condition.
       \item[(ii)] If $R/xR$ has a canonical module then  the $S_2$-ification of $R/x R$ is quasi-Gorenstein (up to localization).      
     \end{enumerate}
       
       \begin{proof}

          (i)  Since, $R$ is quasi-Gorenstein so $\Rc$ is quasi-Gorenstein and hence $S_2$. Hence the formal fibers of $R$ are $S_2$. Therefore the formal fibers of $R/xR$ are $S_2$ too. It turns out that $R/xR$ satisfies $S_2$ precisely when  $\Rc/x\Rc$ is $S_2$.  Therefore, the statement follows from Theorem \ref{CanonicalModuleRegularElement}.
         
         (ii) Let us  denote the $S_2$-ification $\text{Hom}_{R/xR}(\omega_{R/xR},\omega_{R/xR})$ of $R/xR$ by $S$.  Recall that, $S$ is semi-local. Let $\mam_S$ be the Jacobson radical of $S$. We aim to prove that, $\Ht^{d-1}_{\mam_S}(S)\cong \dsum\limits_{\man\in \Max(S)}\Et(S/\man)$. Then the statement follows after localization. In order to accomplish this, we show that $\Ht^{d-1}_{\mam_S}(S)\cong \Ht^{d-1}_{\mam_S}(\omega_{R/xR})$ as $S$-modules. On the other hand using Lemma \ref{FiniteAlgebraAndInjectiveEnvelop} we get $R/xR$-isomorphisms,
         
        \begin{center}
             $\dsuml_{\man\in\text{Max}(S)}\text{E}(S/\man)\cong \homm_{R/xR}\big(S,\text{E}_{R/xR}(R/\mam)\big)= \homm_{R/xR}\big(\homm_{R/xR}(\omega_{R/xR},\omega_{R/xR}),\text{E}_{R/xR}(R/\mam)\big)\cong \text{H}^{d-1}_{\mam_S}(\omega_{R/xR}),$
          \end{center}
          
           which is an $S$-isomorphism too by Lemma \ref{LiftingPropertyOfIsomorphisms}(ii). So, $\Ht^{d-1}_{\mam S}(S)\cong \dsum_{\man\in\Max(S)}\text{E}(S/\man)$, as was to be proved.
           
           Let us to  denote the natural ring homomorphism $R/xR\rightarrow S$, defined by the rule $y\mapsto y\text{id}_{\omega_{R/xR}}$, with $g$.  Let $\mathbf{x}$ be a system of parameters for $R/xR$ whose image in $S$, also denoted by $\mathbf{x}$, is a system of parameters for $S$.  In the following we will prove that not only $\Ht^{d-1}_\mam(g)$  is an isomorphism (see (\ref{EquationFirstLocalCohomology}))  but also $\Ht^{d-1}_\mam(R/xR)\cong \Ht^{d-1}_\mam(\omega_R/x\omega_R)$ (see (\ref{EquationSecondLocalCohomology})). It turns out that $\Ht^{d-1}_\mam(S)\cong \Ht^{d-1}_\mam(\omega_{R/xR})$ as $R/xR$-module. Consider the isomorphism, $\Psi_S:\Ht^{d-1}_\mam(R/xR)\tenr S\rightarrow \Ht^{d-1}_\mam(S)$, which maps an element, $[a/(x_1\ldots x_{d-1})^n]\tenr s\in \Ht^{d-1}_\mam(R/xR)\tenr S$ to $[as/(x_1\ldots x_{d-1})^n]$.  If we take into account the map  $\varphi$ of preceding lemma, then an easy verification  shows that $\Ht^{d-1}_\mam(g)=\psi_S\circ \varphi$. This implies that $\varphi$ is bijective. Hence  $\Ht^{d-1}_\mam(S)\cong \Ht^{d-1}_\mam(\omega_{R/xR})$ as $S$-module by virtue of Lemma \ref{LiftingPropertyOfIsomorphisms}(i). This gives us our desired isomorphism, $$\Ht^{d-1}_{\mam_S}(S)\cong \Ht^{d-1}_{\mam_S}(\omega_{R/xR})$$ of the preceding paragraph and completes our proof.
          
         Since $R$ is quasi-Gorenstein so Lemma \ref{CanonicalModuleAndModuleHeghtInequality}(ii) implies that  $\hit(\map)+\dime(R/\map)=\dime(R)$ for each $\map\in\ts(R)$. Furthermore  $R$ satisfies $S_2$. Thus   Reminder  \ref{PreliminariesCanonical}(iii) yields $\hit(\map)=1$ for each $\map\in\text{Ass}(R/xR)$. It follows that $\dime(R/\map)=d-1$ for each $\map\in \As(R/xR)$, i.e. $R/xR$ is unmixed.  Therefore the identity (\ref{AnnihilatorCanonicalModule}) of Reminder \ref{PreliminariesCanonical}(iv) shows that, $0:_{R/xR}\omega_{R/xR}=0$. Thus   $S$ is a finitely generated integral extension of $R/xR$ by the natural map $g$. Since $R$ is quasi-Gorenstein, so by virtue of Remark \ref{FirstRemark}(c) and Reminder \ref{PreliminariesCanonical}(iv), the exact sequence (\ref{SecondExactSequence}) of Reminder \ref{FirstRemark} gives us an exact sequence,
          $\mathscr{F}:=0\rightarrow \widehat{R}/x\widehat{R}\overset{f}{\rightarrow} \omega_{\widehat{R}/x\widehat{R}}\rightarrow C\rightarrow 0,$ where,
          \begin{center}
          \begin{equation}
             \label{CLocal}
             C_\map\cong \homm_{\Rc_{\map}}\big(\text{H}^{\text{ht}(\map)-1}_{\map \widehat{R}_\map}(\widehat{R}_\map)/x\text{H}^{\text{ht}(\map)-1}_{\map \widehat{R}_\map}(\widehat{R}_\map),\Et_{\Rc_{\map}}(\Rc_{\map}/\map \Rc_{\map})\big)  
          \end{equation}
          \end{center}   
              for each  $\map\in \ts(\widehat{R})$. On the other hand we have the exact sequence, $\mathscr{G}:=0\rightarrow R/xR\overset{g}{\rightarrow} S\rightarrow C^\prime\rightarrow 0.$ Tensoring $\mathcal{G}$ with $\widehat{R}$ yields the exact sequence,
           $$\widehat{\mathscr{G}}:0\rightarrow \widehat{R}/x\widehat{R}\overset{\widehat{g}}{\rightarrow} \homm_{\widehat{R}}(\omega_{\widehat{R}/x\widehat{R}},\omega_{\widehat{R}/x\widehat{R}})\rightarrow C^\prime\tenr \widehat{R}\rightarrow 0.$$
          A similar argument as above shows that $\Rc/x\Rc$ is unmixed and thence $(\omega_{\Rc/x\Rc})_\maq\cong \omega_{(\Rc/x\Rc)_\maq}$ for each $\maq\in \ts(\Rc/x\Rc)$ (see  Reminder \ref{PreliminariesCanonical}(v)).  In light of Proposition \ref{quotient_zero_ht} and (\ref{CLocal})in conjunction with the first part of the corollary  we can deduce that $\map/x\widehat{R}\notin \Supp_{\widehat{R}/x\widehat{R}}(C)$  if and only if $\widehat{R}_\map/x\widehat{R}_\map$ satisfies the $S_2$ condition. On the other hand by \cite[Proposition 1.2]{Aoyama2} we have $\widehat{g}_{\map/x\widehat{R}}$ is an isomorphism  if and only if $\widehat{R}_\map/x\widehat{R}_\map$ satisfies $S_2$.  It turns out that, $$\dime_{\widehat{R}/x\widehat{R}}(C)=\dime_{\widehat{R}/x\widehat{R}}(C^\prime\tenr \widehat{R})=\dime_{R/xR}(C^\prime).$$
          
          Since $R/xR$ is $S_1$, so $R/xR$ is locally Cohen-Macaulay at each  prime ideal of height less than or equal one, whence by \cite[Proposition 1.2]{Aoyama2} $\hit_{R/xR}(0:_{R/xR}C^\prime)\ge 2$ and thereby, $$\text{dim}_{R/xR}(C^\prime)=\dime\big((R/xR)/(0:_{R/xR}C^\prime)\big)\le \dime(R/xR)-\hit(0:_{R/xR}C^\prime)\le d-3.$$  Thus using the exact sequence $\mathscr{G}$ we conclude that 
          \begin{equation}\label{EquationFirstLocalCohomology}
            \text{H}^{d-1}_\mam(S)\cong \text{H}^{d-1}_\mam(R/xR). 
          \end{equation}            
             Since $\dime(C)=\dime(C^\prime)$, using $\mathscr{F}$, we deduce that $\text{H}^{d-1}_\mam(\widehat{R}/x\widehat{R})\cong \text{H}^{d-1}_\mam(\omega_{\widehat{R}/x\widehat{R}})$ similarly. This implies that,
           \begin{equation}\label{EquationSecondLocalCohomology}
             \text{H}^{d-1}_\mam(R/xR)\cong \text{H}^{d-1}_\mam(\widehat{R}/x\widehat{R})\cong \text{H}^{d-1}_\mam(\omega_{\widehat{R}/x\widehat{R}})\cong \text{H}^{d-1}_\mam(\omega_{R/xR}).
             \end{equation}
             
       \end{proof}
     \end{cor}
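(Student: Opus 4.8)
The plan for (i): one direction is immediate, since a quasi-Gorenstein ring is $S_2$ by Reminder \ref{PreliminariesCanonical}(iv). For the converse I would pass to the completion (harmless, as this preserves quasi-Gorensteinness and, the relevant formal fibres being $S_2$, the $S_2$ condition) and argue by contradiction: if $R/xR$ is $S_2$ but not quasi-Gorenstein, Proposition \ref{quotient_zero_ht} produces $\map\in\At_R\big(\Ht^{d-1}_\mam(R)\big)$ with $x\in\map$. Since $R$ is now complete and quasi-Gorenstein it is formally equidimensional and $S_2$, has canonical module $\omega_R=R$, and its formal fibres are fields, hence $S_3$; so Theorem \ref{AttPrimeSnLocalCohomology}(i) applied with $M=R$ gives $\dep(R_\map)=2$ and $\hit_R(\map)\ge 3$. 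As $x$ is a nonzerodivisor on $R_\map$ lying in $\map R_\map$, this yields $\dep(R_\map/xR_\map)=1$, contradicting the estimate $\dep(R_\map/xR_\map)\ge\min\{2,\hit_R(\map)-1\}=2$ forced by the $S_2$ hypothesis on $R/xR$.

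For (ii), write $S:=\homm_{R/xR}(\omega_{R/xR},\omega_{R/xR})$, the module-finite, semi-local $R/xR$-algebra realizing the $S_2$-ification, with Jacobson radical $\mam_S=\sqrt{\mam S}$ and $\dime S=d-1$; the goal is the $S$-module isomorphism $\Ht^{d-1}_{\mam_S}(S)\cong\dsuml_{\man\in\Max(S)}\Et(S/\man)$, which says exactly that each $S_\man$ is quasi-Gorenstein. I would first record the groundwork: since $R$ is quasi-Gorenstein, Lemma \ref{CanonicalModuleAndModuleHeghtInequality}(ii) gives $\hit(\map)+\dime(R/\map)=d$ for all $\map$, so by Reminder \ref{PreliminariesCanonical}(iii) every associated prime of $R/xR$ has height $1$; hence $R/xR$ is unmixed of dimension $d-1$, it is $S_1$ (being $R$ modulo a regular element of an $S_2$-ring), and $0:_{R/xR}\omega_{R/xR}=0$ by the annihilator formula (\ref{AnnihilatorCanonicalModule}), so the natural map $g\colon R/xR\to S$ is an injective module-finite integral extension, say with cokernel $C'$. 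Using $\omega_{\Rc}=\Rc$, the exact sequence (\ref{SecondExactSequence}) of Remark \ref{FirstRemark} becomes $0\to\Rc/x\Rc\to\omega_{\Rc/x\Rc}\to C\to 0$ with $C$ locally described by (\ref{CLocal}).

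The central estimate I would establish is $\dime_{R/xR}(C')\le d-3$. On the one hand, by \cite[Proposition 1.2]{Aoyama2}, $\map\notin\Supp(C')$ precisely when $(R/xR)_\map$ is $S_2$. On the other, for $\map\in\ts(\Rc/x\Rc)$, formula (\ref{CLocal}) together with a localized version of Proposition \ref{quotient_zero_ht} (legitimate because $\Rc_\map$ is again quasi-Gorenstein, using $(\omega_R)_\map=R_\map$) and with part (i) shows $\map\notin\Supp(C)$ precisely when $(\Rc/x\Rc)_\map$ is $S_2$. As the formal fibres of $R/xR$ are $S_2$, the non-$S_2$ loci of $R/xR$ and $\Rc/x\Rc$ correspond, so $\dime_{R/xR}(C')=\dime_{\Rc/x\Rc}(C'\tenr\Rc)=\dime_{\Rc/x\Rc}(C)$. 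Finally, $R/xR$ being $S_1$ is locally Cohen-Macaulay in codimension $\le 1$, so a second application of \cite[Proposition 1.2]{Aoyama2} gives $\hit_{R/xR}(0:_{R/xR}C')\ge 2$, whence $\dime_{R/xR}(C')\le (d-1)-2=d-3$, the same bound then holding for $C$. I expect this support comparison to be the main obstacle: it is the one place where Proposition \ref{quotient_zero_ht}, part (i), the local formula (\ref{CLocal}) for $C$, and Aoyama's $S_2$-ification criterion must all be dovetailed.

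With these bounds the rest is bookkeeping. The long exact local cohomology sequences of $0\to R/xR\to S\to C'\to 0$ and $0\to\Rc/x\Rc\to\omega_{\Rc/x\Rc}\to C\to 0$ yield $\Ht^{d-1}_\mam(S)\cong\Ht^{d-1}_\mam(R/xR)$ (indeed $\Ht^{d-1}_\mam(g)$ is an isomorphism) and $\Ht^{d-1}_\mam(R/xR)\cong\Ht^{d-1}_\mam(\omega_{R/xR})$. Since $\Ht^{d-1}_\mam(g)$ factors as $\psi_S\circ\varphi$ with $\psi_S$ the natural $S$-isomorphism $\Ht^{d-1}_\mam(R/xR)\tenr S\to\Ht^{d-1}_\mam(S)$, the map $\varphi$ of Lemma \ref{LiftingPropertyOfIsomorphisms} is bijective, so Lemma \ref{LiftingPropertyOfIsomorphisms}(i) upgrades $\Ht^{d-1}_\mam(S)\cong\Ht^{d-1}_\mam(\omega_{R/xR})$ to an $S$-module isomorphism. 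Since $S\cong\omega_{\omega_{R/xR}}$ (Definition and Remark \ref{GeneralCanonicalModuleDefinition}), Lemma \ref{FiniteAlgebraAndInjectiveEnvelop} and Lemma \ref{LiftingPropertyOfIsomorphisms}(ii) give $\dsuml_{\man\in\Max(S)}\Et(S/\man)\cong\homm_{R/xR}\big(S,\Et_{R/xR}(R/\mam)\big)\cong\Ht^{d-1}_\mam(\omega_{R/xR})$ as $S$-modules. Splicing these isomorphisms gives $\Ht^{d-1}_{\mam_S}(S)\cong\dsuml_{\man\in\Max(S)}\Et(S/\man)$, i.e. $S$ is quasi-Gorenstein after localization at each of its maximal ideals.
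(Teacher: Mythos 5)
Your proposal is correct and follows essentially the same route as the paper: part (i) is the same attached-prime/depth contradiction (the paper merely packages it as the separate Theorem \ref{CanonicalModuleRegularElement} about $\omega_{\widehat{R}}/x\omega_{\widehat{R}}$, whose proof is exactly your argument), and part (ii) reproduces the paper's chain of steps --- unmixedness and faithfulness of $\omega_{R/xR}$, the two exact sequences for $C$ and $C^\prime$, the Aoyama support comparison yielding $\dime(C^\prime)\le d-3$, and the upgrade to $S$-isomorphisms via Lemmas \ref{FiniteAlgebraAndInjectiveEnvelop} and \ref{LiftingPropertyOfIsomorphisms}. I see no gaps.
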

     
     The first part of the preceding corollary is a special case of the following theorem.
     
     \begin{thm}\label{CanonicalModuleRegularElement}
     	Suppose that $\widehat{R}$ satisfies the $S_2$-condition and that $x$ is a regular element of $R$.  Then, $\omega_{\widehat{R}}/x\omega_{\widehat{R}}\cong \omega_{\widehat{R}/x\widehat{R}}$ if and only if $\omega_{\widehat{R}}/x\omega_{\widehat{R}}$ satisfies the Serre-condition $S_2$.
     	  \begin{proof}
     	  	  We prove that $\omega_{\widehat{R}/x\widehat{R}}$ is isomorphic to  $\omega_{\widehat{R}}/x\omega_{\widehat{R}}$ provided the latter  is $S_2$. The converse is obvious by By Remark \ref{PreliminariesCanonical}(v). Suppose, to the contrary, that the map, $\iota,$ in the  exact sequence (\ref{SecondExactSequence}) of Remark \ref{FirstRemark}, for $M:=R$, is not isomorphism, i.e., $\text{coker}(\ \iota)=C=\big(\Ht^{d-1}_\mam(R)/x\Ht^{d-1}_\mam(R)\big)^\vee\neq 0$. It follows that, $x\in \unil_{\map\in\At\big(\Ht^{d-1}_{\mam}(R)\big)}\map,$ and thence,  $x\in \unil_{\map\in\At\big(\Ht^{d-1}_{\mam}(\widehat{R})\big)}\map$. So, by  Theorem \ref{AttPrimeSnLocalCohomology}(i), there exists a prime ideal $\maq\in \ts(\widehat{R})$, containing $x$, such that $\dep_{\widehat{R}_\maq}\big((\omega_{\widehat{R}})_\maq\big)=2$ but $\hit_{\omega_{\widehat{R}}}\maq\ge 3$. Consequently, $\dep_{\widehat{R}_\maq}\big((\omega_{\widehat{R}}/x\omega_{\widehat{R}})_\maq\big)=1$ and $\hit_{\omega_{\widehat{R}}/x\omega_{\widehat{R}}}\maq\ge 2$, which violates the $S_2$-property of $\omega_{\widehat{R}}/x\omega_{\widehat{R}}$. 
     	  \end{proof}
     \end{thm}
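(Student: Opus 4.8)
The plan is to dispose of the ``only if'' implication at once and then concentrate on the converse, whose whole force is borne by Theorem \ref{AttPrimeSnLocalCohomology}(i). First, for the ``only if'' direction: since $x$ is a regular element of $R$ it stays regular on the faithfully flat extension $\Rc$, and it is in fact $\omega_{\Rc}$-regular, because $\As_{\Rc}(\omega_{\Rc})=\text{Assht}_{\Rc}(\Rc)\subseteq\As_{\Rc}(\Rc)$ by Lemma \ref{CanonicalModuleAndModuleHeghtInequality}(i). A canonical module always satisfies $S_2$ (Lemma \ref{CanonicalModuleAndModuleHeghtInequality}(iv)), so $\omega_{\Rc/x\Rc}$, and hence anything isomorphic to it, is $S_2$. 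This is the elementary half already noted in the statement.

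For the converse, assume $\omega_{\Rc}/x\omega_{\Rc}$ is $S_2$ and suppose, towards a contradiction, that the monomorphism $\iota$ of the exact sequence (\ref{SecondExactSequence}), formed with $M:=R$, fails to be an isomorphism; equivalently $C=\big(\Ht^{d-1}_\mam(R)/x\Ht^{d-1}_\mam(R)\big)^\vee\neq 0$, i.e. $\Ht^{d-1}_\mam(R)\neq x\Ht^{d-1}_\mam(R)$. Applying the flat base change $-\otimes_R\Rc$ and using faithful flatness, this turns into $\Ht^{d-1}_{\mam\Rc}(\Rc)\neq x\Ht^{d-1}_{\mam\Rc}(\Rc)$; then, by the attached-prime form of Nakayama's lemma for Artinian modules (\cite[7.2.11 Proposition]{LocalCohomology}) together with prime avoidance, $x$ lies in some $\maq\in\At_{\Rc}\big(\Ht^{d-1}_{\mam\Rc}(\Rc)\big)$.

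Now I would invoke Theorem \ref{AttPrimeSnLocalCohomology}(i) with $M:=\Rc$; its hypotheses hold, since $\Rc$ is complete (so it has a canonical module and is a homomorphic image of a Gorenstein ring, hence universally catenary, hence, being also $S_2$, equidimensional, i.e. formally equidimensional) and its formal fibres are fields, in particular $S_3$. The theorem yields $\dep_{\Rc_\maq}\big((\omega_{\Rc})_\maq\big)=2$ and $\hit_{\omega_{\Rc}}\maq\geq 3$. Because $x\in\maq$ is $\omega_{\Rc}$-regular, localizing $0\rightarrow\omega_{\Rc}\overset{x}{\rightarrow}\omega_{\Rc}\rightarrow\omega_{\Rc}/x\omega_{\Rc}\rightarrow 0$ at $\maq$ gives $\dep_{\Rc_\maq}\big((\omega_{\Rc}/x\omega_{\Rc})_\maq\big)=1$ while $\hit_{\omega_{\Rc}/x\omega_{\Rc}}\maq=\hit_{\omega_{\Rc}}\maq-1\geq 2$; hence $\dep_{\Rc_\maq}\big((\omega_{\Rc}/x\omega_{\Rc})_\maq\big)<\min\{2,\hit_{\omega_{\Rc}/x\omega_{\Rc}}\maq\}$, contradicting the $S_2$-property of $\omega_{\Rc}/x\omega_{\Rc}$ (Reminder \ref{PreliminariesCanonical}(iii)). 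Therefore $C=0$, $\iota$ is an isomorphism, and $\omega_{\Rc}/x\omega_{\Rc}\cong\omega_{\Rc/x\Rc}$.

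Given Theorem \ref{AttPrimeSnLocalCohomology}(i), I do not anticipate a genuine obstacle: the converse is a short deduction. The only two points that need a moment's care are the verification that $\Rc$ is formally equidimensional, so that Theorem \ref{AttPrimeSnLocalCohomology}(i) is applicable (this rests on the fact that a complete local ring, being universally catenary, is equidimensional once it satisfies $S_2$), and the elementary bookkeeping that cutting by the regular element $x$ lowers both depth and height by exactly one at $\maq$.
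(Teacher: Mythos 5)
Your proof is correct and follows essentially the same route as the paper's: reduce to showing that the cokernel $C$ of $\iota$ vanishes, deduce from $C\neq 0$ that $x$ lies in an attached prime of $\Ht^{d-1}_{\mam\Rc}(\Rc)$, invoke Theorem \ref{AttPrimeSnLocalCohomology}(i) to produce a prime $\maq$ with $\dep_{\Rc_\maq}\big((\omega_{\Rc})_\maq\big)=2$ and $\hit_{\omega_{\Rc}}\maq\ge 3$, and contradict the $S_2$-hypothesis after cutting by $x$. The only difference is that you spell out the verification, left implicit in the paper, that $\Rc$ is equidimensional and that its formal fibres are $S_3$, so that the cited theorem applies; this is added care rather than a divergence.
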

     
     The first author of the paper, in \cite[Proposition 3.4.]{TavanfarReduction}, shows that the Ulrich's deformation of certain Gorenstein rings to unique factorization domains, developed in \cite{UlrichGorenstein },  has a quasi-Gorenstein counterpart. We will mention the statement of this deformation theorem, for the sake of completeness of the paper.
     
     \begin{thm} (see, \cite[Proposition 3.4.]{TavanfarReduction} and \cite[Proposition 1]{UlrichGorenstein})
     	 \label{QuasiGorensteinDeformsToUFD} Suppose that $P$ is a regular  ring and $R:=P/\maa$ is a quasi-Gorenstein ring. Assume, furthermore, that $R$ is locally complete intersection at codimension $\le 1$. Then there exists a unique factorization domain $S$ (which is of finite type over $P$) and a regular sequence $\mathbf{y}$ of $S$ such that $R\cong S/(\mathbf{y})$.
     \end{thm}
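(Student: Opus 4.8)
Since the statement coincides with \cite[Proposition 3.4.]{TavanfarReduction}, which is the quasi-Gorenstein analogue of \cite[Proposition 1]{UlrichGorenstein}, the plan is to run Ulrich's construction in the possibly non-Cohen--Macaulay situation. First I would dispose of the case in which $R$ is Cohen--Macaulay, where the assertion is exactly Ulrich's theorem, and then identify the two structural facts that have to replace ``$R$ is a maximal Cohen--Macaulay module over a regular ring'': by Reminder~\ref{PreliminariesCanonical}(iv), the quasi-Gorenstein hypothesis yields $\omega_R\cong R$ (free of rank one) and forces $R$ to be $S_2$ and unmixed.

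The construction itself proceeds by genericity. One adjoins a finite system of variables $\mathbf{T}$ to $P$, sets $P':=P[\mathbf{T}]$ localised at the relevant prime, and perturbs a chosen generating set of $\maa$ by the entries of $\mathbf{T}$ so as to obtain an ideal $\mab\subseteq P'$; this perturbation is the technical heart of the argument, and it is here that the self-duality $\omega_R\cong R$ is used, exactly as in Ulrich's proof, to guarantee that $\mab$ can be chosen prime and that the images $\mathbf{y}$ in $S:=P'/\mab$ of a suitable subsequence of $\mathbf{T}$ form an $S$-regular sequence with $S/(\mathbf{y})\cong R$. In particular $S$ is a generic deformation of $R$ and is finitely generated, after the localisation, over $P$; compare the deformation discussion around Theorem~\ref{IntroDeform}.

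I would then prove that $S$ is a normal domain. That $S$ satisfies $S_2$ is inherited from $R$: over a prime containing $\mathbf{y}$ the depth inequality descends from $R=S/(\mathbf{y})$ through the regular sequence $\mathbf{y}$, while the remaining primes are handled by genericity. For $R_1$ one uses precisely the hypothesis that $R$ is a complete intersection at each prime of codimension $\le 1$: a generic deformation of a complete-intersection local ring of dimension $\le 1$ is regular, so $S_\maq$ is regular for every $\maq\in\ts(S)$ of codimension $\le 1$. Serre's criterion then gives normality, and since $\mab$ was arranged to be prime, $S$ is a normal domain.

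The last and, I expect, the main point is the factoriality of $S$, i.e.\ $\mathrm{Cl}(S)=0$. Following Ulrich, one shows that passing to the generic complete intersection $\mab$ inside the factorial ring $P'$ does not enlarge the divisor class group, by a Grothendieck--Lefschetz / parafactoriality argument applied to $\mab$. The delicate feature in the quasi-Gorenstein setting is that $S$ need not be Cohen--Macaulay, so that finite free resolutions over $S$ are unavailable; the comparison of class groups and the depth estimates needed to invoke the purity statements must instead be carried out through the canonical module $\omega_S$ and the local cohomology modules $\Ht^i_{\mam}$, in the spirit of Section~2. Once $\mathrm{Cl}(S)=0$ and $S$ is a Noetherian normal domain, $S$ is a unique factorization domain (and then, having a canonical module, it is automatically quasi-Gorenstein, cf.\ \cite{Foxby}); since $\mathbf{y}$ is a regular sequence on $S$ with $S/(\mathbf{y})\cong R$ by construction, this proves the theorem.
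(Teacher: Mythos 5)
The first thing to say is that the paper does not prove this statement at all: Theorem~\ref{QuasiGorensteinDeformsToUFD} is quoted verbatim from \cite[Proposition 3.4.]{TavanfarReduction} ``for the sake of completeness,'' so there is no in-paper proof to compare yours against. Your proposal therefore has to stand on its own, and as written it is an outline of a strategy rather than a proof. You correctly isolate the two structural inputs (the quasi-Gorenstein hypothesis giving $\omega_R\cong R$, $S_2$ and unmixedness; the complete-intersection hypothesis in codimension $\le 1$ giving $R_1$ for the deformation) and the three things that must be established (primality of the generically perturbed ideal $\mab$, the fact that the adjoined variables $\mathbf{y}$ form an $S$-regular sequence with $S/(\mathbf{y})\cong R$, and the vanishing of the divisor class group). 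But each of these is asserted to hold ``exactly as in Ulrich's proof'' without being carried out, and in the non-Cohen--Macaulay setting none of them is automatic.

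The most serious gap is the last step. Ulrich's argument that generic specialization does not enlarge the class group leans on the Cohen--Macaulayness of the rings involved (finite free resolutions, acyclicity and depth-counting arguments); you acknowledge that these tools are unavailable here and say the argument ``must instead be carried out through the canonical module $\omega_S$ and the local cohomology modules,'' but you do not say how. That sentence is precisely the content that would need to be supplied, and it is not a routine substitution: without Cohen--Macaulayness the depth estimates needed for the parafactoriality/purity statements have to be extracted from the $S_2$ property of $\omega_S$ and from control of the intermediate local cohomologies, which is a genuine piece of work (indeed it is the reason \cite[Proposition 3.4.]{TavanfarReduction} is a separate result rather than a corollary of \cite[Proposition 1]{UlrichGorenstein}). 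Similarly, the claim that the generic perturbation can be chosen prime and that $\mathbf{y}$ is a regular sequence on $S$ is exactly where the self-duality $\omega_R\cong R$ must enter, and your proposal names this as ``the technical heart'' without executing it. So the plan is faithful to the intended method, but the proof is not there.
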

     
     In spite of the above deformation theorem, at the of writing this paper, it is not clear for us whether the quasi-Gorenstein property deforms. However, in view of the following theorem,     the quasi-Gorenstein property adheres a variant of deformation which we call it as analytic deformation. The following theorem will be used in the proof of Theorem \ref{G-dim}.

     \begin{thm} \label{QuasiGorensteinDeformation} Suppose that $x\in R\backslash\text{Z}(R)$ and $R/x^nR$ is quasi-Gorenstein for each $n\in \mn$. Then $R$ is quasi-Gorenstein.
       \begin{proof}
        Our proof reduces to the complete case, so there exists a Gorenstein local ring $(S,\man)$ such that $\dime(S)=d$ and $R$ is a homomorphic image of $S$. The commutative diagrams,
           \begin{equation}\label{FirstDiagram}\begin{CD}
             0@>>> R @>x^n>> R @>>> R/x^nR@>>>0\\
             @. @| @VVxV @VVf_n:=(r+x^{n}R\mapsto rx+x^{n+1}R)V \\
             0@>>> R @>x^{n+1}>> R @>>> R/x^{n+1}R@>>>0
           \end{CD},\end{equation}
          leads to the commutative diagrams,
          \begin{equation}\label{SecondDiagram}\begin{CD}
             0@>>> \homm_S(R,S)/x^n\homm_S(R,S) @>>> \ext^1_S(R/x^nR,S) @>>> 0:_{\ext^1_S(R,S)}x^n@>>>0\\
             @. @AAA @AA\ext^1_S(f_n,\text{id}_S)A @AAxA \\
             0@>>> \homm_S(R,S)/x^{n+1}\homm_S(R,S) @>>> \ext^1_S(R/x^{n+1}R,S) @>>> 0:_{\ext^1_S(R,S)}x^{n+1}@>>>0
           \end{CD},\end{equation}
           for each $n\in \mn$. Taking the inverse limit we get the exact sequence, 
           \begin{equation}
             \label{ExactsequenceCanonicalModule}
             0\rightarrow\lim\limits_{\underset{n\in\mn}{\longleftarrow}}\omega_R/x^n\omega_R\rightarrow \lim\limits_{\underset{n\in\mn}{\longleftarrow}}\ext^1_S(R/x^nR,S)\rightarrow \lim\limits_{\underset{n\in\mn}{\longleftarrow}}0:_{\ext^1_S(R,S)}x^n.
           \end{equation}
            But, $\omega_R\cong \lim\limits_{\underset{n\in\mn}{\longleftarrow}}\omega_R/x^n\omega_R$, by \cite[Theorem 8.7]{Matsumura}    and \cite[page 63, 8.2.]{Matsumura}. By our hypothesis we have,
            \begin{align*}
               \ext^1_S(R/x^nR,S)&\cong \homm_S\big(\Ht^{d-1}_\man(R/x^nR),\text{E}_S(S/\man)\big)\cong \homm_S\big(\Ht^{d-1}_\mam(R/x^nR)\ten_{R/x^nR} R/x^nR, \Et_S(S/\man)\big)&\\ &\cong \homm_{R/x^nR}\bigg(\Ht^{d-1}_\mam(R/x^nR),\homm_S\big(R/x^nR,\Et_S(S/\man)\bigg)&\\ &\cong \homm_{R/x^nR}\big(\Ht^{d-1}_\mam(R/x^nR),\Et_{R/x^nR}(R/\mam)\big)\\& \cong R/x^nR,
            \end{align*}               
                for each $n\in \mn$.  Hence the inverse system, $$\big(\lim\limits_{\underset{n\in\mn}{\longleftarrow}}\ext^1_S(R/x^nR,S),\ext^1_S(f_n,\text{id}_S)\big)_{n\in\mn},$$ is isomorphic to an inverse system $(R/x^nR,\gamma_n:R/x^{n+1}R\rightarrow R/x^nR)_{n\in \mn}$. For each $n\in \mn$ there exists $\alpha_n\in R$ such that $\gamma_n(r+x^{n+1}R)=\alpha_nr+x^nR$. We claim that all but finitely many of  $\alpha_i's$ are invertible. In fact, one can  easily verified that if our claim is wrong then for an arbitrary element $(r_n+x^nR)_{n\in\mn}\in \lim\limits_{\underset{n\in\mn}{\longleftarrow}}R/x^nR$, we get $r_n+x^nR\in \bigcap\limits_{l\in\mn}(\mam/x^nR)^l=0$ for each $n\in\mn$. Consequently $\lim\limits_{\underset{n\in\mn}{\longleftarrow}}R/x^nR=0$ and thereby $\omega_R=0$ by the exact sequence (\ref{ExactsequenceCanonicalModule}) which is a contradiction. Hence, without loss of generality we can assume that all of $\alpha_i's$ are invertible. Modify the diagram (\ref{FirstDiagram}) so that $x^{n+1}$ is replaced by $\alpha_{n}^{-1}x^{n+1}$ in the lower row and the vertical maps $f_n$ and $xid_R$ are replaced by $\alpha_n^{-1}f_n$ and $\alpha_n^{-1}x$, respectively. This yields a modification of the diagram (\ref{SecondDiagram}) in which $\ext^1_S(f_n,\text{id}_S)$ is replaced by $\alpha_n^{-1}\ext^1_S(f_n,\text{id}_S)$ and, by a slight abuse of notation, $x\text{id}_{{\ext^1_S(R,S)}}$ is replaced by $\alpha_n^{-1}x\text{id}_{\ext^1_S(R,S)}$. The result is an exact sequence analogous to the exact sequence (\ref{ExactsequenceCanonicalModule}) in which the second non-zero module is isomorphic to $\lim\limits_{\underset{n\in\mn}{\longleftarrow}}R/x^nR\cong R$.  
                
                Moreover, $\lim\limits_{\underset{n\in\mn_n}{\longleftarrow}}0:_{\ext^1_S(R,S)}x^n=0,$  by the following fact. The ascending chain of modules, $\{0:_{\ext^1_S(R,S)}x^n\}_{n\in \mn},$ stabilizes eventually, so that the multiplication by a fixed sufficiently large power of $x$ is zero. This concludes our proof.
       \end{proof}
     \end{thm}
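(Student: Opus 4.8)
The plan is to reduce to the complete case and then extract $\omega_R$ from an inverse limit built out of the short exact sequences $0\to R\xrightarrow{x^n}R\to R/x^nR\to 0$. Since a local ring is quasi-Gorenstein precisely when its completion is, and since $R/x^nR$ is quasi-Gorenstein exactly when $\Rc/x^n\Rc\cong\widehat{R/x^nR}$ is, I may assume $R$ is complete and, by Cohen's structure theorem, fix a Gorenstein local ring $(S,\man)$ with $\dime(S)=d$ that maps onto $R$. Local duality (Reminder \ref{PreliminariesCanonical}) then identifies $\omega_R\cong\homm_S(R,S)$ and $\omega_{R/x^nR}\cong\ext^1_S(R/x^nR,S)$ for all $n\in\mn$. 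Since $\As_R(\omega_R)=\text{Assht}_R(R)\subseteq\As_R(R)$ by Lemma \ref{CanonicalModuleAndModuleHeghtInequality}(i), the regular element $x$ is a nonzerodivisor on $\omega_R$ as well.

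First I would apply $\homm_S(-,S)$ to these sequences. Because $\homm_S(R/x^nR,S)\cong 0:_{\omega_R}x^n=0$, the long exact sequences collapse to short exact sequences $0\to\omega_R/x^n\omega_R\to\omega_{R/x^nR}\to 0:_{\ext^1_S(R,S)}x^n\to 0$, compatible with the multiplication-by-$x$ maps $f_n\colon R/x^nR\to R/x^{n+1}R$, which induce connecting maps $\ext^1_S(f_n,\text{id}_S)$ and organize the whole picture into an inverse system over $n$ (this is the sequence of Remark \ref{FirstRemark} made functorial in $n$). Taking the inverse limit, which is left exact, yields an exact sequence $0\to\varprojlim_n\omega_R/x^n\omega_R\to\varprojlim_n\ext^1_S(R/x^nR,S)\to\varprojlim_n\bigl(0:_{\ext^1_S(R,S)}x^n\bigr)$ in which the leftmost transition maps are the natural surjections; hence the leftmost term is the $x$-adic completion of the finitely generated module $\omega_R$ over the complete Noetherian local ring $R$, namely $\omega_R$ itself (\cite[Theorem 8.7]{Matsumura}).

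The heart of the argument, which I expect to be the main obstacle, is identifying the middle term with $R$. By hypothesis $\ext^1_S(R/x^nR,S)\cong\omega_{R/x^nR}\cong R/x^nR$ for every $n$, so the inverse system $\bigl(\ext^1_S(R/x^nR,S),\,\ext^1_S(f_n,\text{id}_S)\bigr)$ is isomorphic to one of the form $(R/x^nR,\gamma_n)$ whose transition maps $\gamma_n$ are multiplications by scalars $\alpha_n\in R$ — but the isomorphisms $\omega_{R/x^nR}\cong R/x^nR$ are not canonical and need not respect the transition maps a priori, so some care is required. The key point is that all but finitely many $\alpha_n$ are units: if infinitely many lay in $\mam$, iterating the relations $\gamma_n$ would force each coordinate of every element of $\varprojlim_n R/x^nR$ into $\bigcap_\ell(\mam/x^nR)^\ell=0$ by Krull's intersection theorem, so $\varprojlim_n R/x^nR=0$, whence $\omega_R=0$ by the exact sequence above, contradicting $\Ht^d_\mam(R)\cong\omega_R^\vee\neq 0$. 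Discarding the finitely many bad indices and rescaling $x^{n+1}$ by $\alpha_n^{-1}$ in the lower rows of the defining diagrams (adjusting the vertical maps accordingly) turns every transition map into the natural surjection, so the rescaled exact sequence has middle term $\varprojlim_n R/x^nR\cong R$.

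Finally one disposes of the right-hand term: the ascending chain $\{0:_{\ext^1_S(R,S)}x^n\}_{n\in\mn}$ stabilizes since $\ext^1_S(R,S)$ is Noetherian, so a fixed sufficiently large power of $x$ annihilates all these submodules, and as the transition maps of this subsystem are multiplications by $x$ (up to the unit rescaling) its inverse limit vanishes. Substituting back, the rescaled exact sequence degenerates to $\omega_R\cong\varprojlim_n\ext^1_S(R/x^nR,S)\cong R$; hence $\omega_R$ is free of rank one and $R$ is quasi-Gorenstein.
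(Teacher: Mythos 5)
Your proposal is correct and follows essentially the same route as the paper's own proof: reduction to the complete case, dualizing the ladder of short exact sequences $0\to R\xrightarrow{x^n}R\to R/x^nR\to 0$ against the Gorenstein cover $S$, passing to inverse limits to recover $\omega_R$ on the left, handling the non-canonical identifications $\ext^1_S(R/x^nR,S)\cong R/x^nR$ via the unit-rescaling of the scalars $\alpha_n$, and killing $\varprojlim_n\bigl(0:_{\ext^1_S(R,S)}x^n\bigr)$ by stabilization of the ascending chain. The one point where you are slightly more careful than the paper—observing explicitly that $\homm_S(R/x^nR,S)\cong 0:_{\omega_R}x^n=0$ because $x$ is $\omega_R$-regular, and flagging that the isomorphisms $\omega_{R/x^nR}\cong R/x^nR$ need not commute with the transition maps—is welcome but does not change the argument.
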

     
          \section{Quasi-Gorenstein rings and the $G$-dimension of the canonical module}

    \begin{defi} \cite{BeyondTotallyReflexive} A finitely generated $R$-module $N$ is said to be a totally reflexive $R$-module if it satisfies the following conditions.
     \begin{enumerate}
       \item[(i)] $N$ is reflexive, i.e. the natural evaluation map, $\eta_N:N\rightarrow \homm_R\big(\homm_R(N,R),R\big)$, is an isomorphism.
       \item[(ii)] $\ext^i_R(N,R)=\ext^i_R\big(\homm_R(N,R),R\big)=0$ for each $i\ge 1$.
     \end{enumerate}
        \end{defi}

   \begin{defi} \cite{BeyondTotallyReflexive} An (augmented)  $G$-resolution of a finitely generated $R$-module $N$ is an exact sequence, $\cdots\rightarrow G_i\rightarrow G_{i-1}\rightarrow \cdots\rightarrow G_0\rightarrow N\rightarrow 0,$  where $G_i$ is totally reflexive. Moreover, the $G$-dimension of $N$, denoted by $\text{G-dim}_RN$, is the least integer $n\ge 0$ such that there exists a $G$-resolution of $N$ with $G_i=0$ for each $i>n$ (If there does not exist such a resolution then we say that $\text{G-dim}_RN=\infty$).
   \end{defi}

   \begin{lem}\label{ZeroDepthTotallyReflexive} Suppose that $(R,\mam)$ is a zero depth local ring and $M$ is a totally reflexive $R$-module. If $M$ is indecomposable and $0:_RM=0$, then $M\cong R$. In particular, if $\homm_R(M,M)\cong R$, then $M\cong R$.
     \begin{proof}
       
We assume that $M$ is not free and we look for a contradiction. In particular, $M$ has no non-zero free direct summand. Let $P_{\bullet}$
be a free resolution of $M$ and $F_{\bullet}$ be a free resolution
of $M^{*}$. Then according to the totally reflexiveness of $M$ we
get a complete resolution, $$\ldots\rightarrow P_{2}\rightarrow P_{1}\rightarrow P_{0}\overset{A}{\rightarrow}F_{0}^{*}\rightarrow F_{1}^{*}\rightarrow F_{2}^{*}\rightarrow\ldots,$$
of $M$ where $\text{im}(A)=\text{im}(M\rightarrow F_{0}^{*})$. If
the matrix $A$ has some entries in $R\backslash\mathfrak{m}$, then
there exists an element $x:=(r_{1},\ldots,r_{i-1},1,r_{i+1},\ldots,r_{l})\in\text{im}(A)=\text{im}(M\rightarrow F_{0}^{*})$.
Let $\pi:F_0^*\rightarrow R$ be the projection map onto the $i$-th component. Then, $\pi\rceil_{\text{im}(A)}\in \homm_R\big(\text{im}(A),R\big)$, so $\mathcal{O}_{\text{\text{im}(A)}}(x)=R$, where $\mathcal{O}_{\text{\text{im}(A)}}(x)=\{f(x):f\in \homm_R\big(\text{im}(A),R\big)\}$. Therefore
by \cite[Lemma 9.5.1.]{Herzog}, $M\cong\text{im}(M\rightarrow F_{0}^{*})=\text{im}(A)$,
has a non-zero free direct summand which is a contradiction. It turns
out that  entries of $A$ are elements of $\mathfrak{m}$\footnote{In general, the minimal free resolutions of $M$ and $M^*$ for some totally reflexive $R$-module $M$, similarly as above, give us a matrix $A$ such that its entries  may lie in $R\backslash \mam$. For example, set $M:=R$. Then it is easily seen that the given matrix $A$ will be the matrix $[1]$.}. This immediately
yields, $0:_{R}\mathfrak{m}\subseteq0:_{R}\text{im}(A)=0:_{R}\text{im}(M\rightarrow F_{0}^{*})=0:_{R}M=0$,
which contradicts with the fact that $\text{depth}(R)=0$. It follows
 that $M$ is free, as claimed. 

   If, $\homm_R(M,M)\cong R$, then $0:_{R}M\subseteq0:_{R}\text{Hom}_{R}(M,M)=0:_{R}R=0$. Furthermore, $M$ is indecomposable, otherwise, $R\cong \homm_R(M,M)$ would be decomposable which is a contradiction.
     \end{proof}
   \end{lem}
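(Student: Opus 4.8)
The plan is to prove that the stated hypotheses force $M$ to be free; once $M$ is known to be free, indecomposability together with $0:_RM=0$ (which in particular guarantees $M\neq 0$) forces $M\cong R$. So the crux is the claim that a totally reflexive indecomposable module $M$ over a zero-depth local ring with $0:_RM=0$ must be free. I would argue by contradiction: assume $M$ is not free. Since $M$ is indecomposable, this means $M$ has \emph{no} non-zero free direct summand (a non-zero free summand of an indecomposable module would exhaust the module).

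Next I would bring in the one extra structure that total reflexivity provides over mere reflexivity, namely a complete (totally acyclic) resolution. Choosing minimal free resolutions $P_\bullet\to M$ and $F_\bullet\to M^*$ and using $\ext^i_R(M^*,R)=0$ for $i\ge 1$, one splices these into an acyclic complex of free modules $\cdots\to P_1\to P_0\overset{A}{\to} F_0^*\to F_1^*\to\cdots$ in which $\text{im}(A)=\ker(F_0^*\to F_1^*)\cong M^{**}\cong M$, the last isomorphism being reflexivity of $M$. The whole argument then revolves around the connecting matrix $A$, and I would split into two cases according to whether some entry of $A$ is a unit or all entries of $A$ lie in $\mam$.

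If some entry of $A$ lies in $R\setminus\mam$, then there is an element $x\in\text{im}(A)\subseteq F_0^*$ one of whose coordinates is a unit; composing the inclusion $\text{im}(A)\hookrightarrow F_0^*$ with the projection of the free module $F_0^*$ onto that coordinate yields a homomorphism $\text{im}(A)\to R$ sending $x$ to a unit, so the order ideal $\mathcal{O}_{\text{im}(A)}(x):=\{f(x):f\in\homm_R(\text{im}(A),R)\}$ equals $R$. By the standard order-ideal splitting criterion this makes $Rx\cong R$ a free direct summand of $\text{im}(A)$, hence of $M$, contradicting the fact that $M$ has no non-zero free direct summand. Therefore all entries of $A$ lie in $\mam$, and consequently $0:_R\mam\subseteq 0:_R\text{im}(A)=0:_RM=0$. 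But $\dep(R)=0$ means $\mam\in\As(R)$, i.e.\ $0:_R\mam\neq 0$; this contradiction shows $M$ was free after all, and the first statement follows.

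Finally, for the ``in particular'' clause I would observe that $\homm_R(M,M)\cong R$ already implies both $0:_RM\subseteq 0:_R\homm_R(M,M)=0$ and the indecomposability of $M$: a non-trivial decomposition $M=M_1\oplus M_2$ would give $\homm_R(M,M)$ the non-zero direct summands $\homm_R(M_1,M_1)$ and $\homm_R(M_2,M_2)$ (each contains the respective identity map), so $R$ would decompose non-trivially, which is impossible; then the first part applies. The main obstacle I anticipate is the bookkeeping in the splicing step — assembling the complete resolution and correctly identifying $\text{im}(A)$ with $M$ through reflexivity — together with the point, worth flagging because it is mildly counterintuitive, that even for \emph{minimal} chosen resolutions the connecting matrix $A$ need not have all its entries in $\mam$, which is exactly why the case distinction is unavoidable.
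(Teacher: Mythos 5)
Your proposal is correct and follows essentially the same route as the paper's own proof: the same reduction to showing $M$ is free, the same complete-resolution construction with the connecting matrix $A$, the same case split on whether $A$ has a unit entry (using the order-ideal splitting criterion to rule that out), and the same contradiction with $0:_R\mam\neq 0$ coming from $\dep(R)=0$. The treatment of the ``in particular'' clause is likewise identical.
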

   
   \begin{rem} Let  $M$ be an $R$-module such that $\homm_R(M,M)\cong R$ and  $\ext^1_R(M,M)=0$. Let $x_1\in R\backslash \text{Z}(M)$. Then $\text{Hom}_{R/x_1R}(M/x_1M,M/x_1M)\cong R/x_1R$. If in addition, $\ext^2_R(M,M)=0$, and $x_1,x_2$ is an $M$-regular sequence  such that $x_1\notin\text{Z}(R)$ then by \cite[page 140, Lemma 2.(i)]{Matsumura}], $\ext^1_{R/x_1R}(M/x_1M,M/x_1M)\cong \ext^2_R(M/x_1M,M)$. On the other hand from  the vanishing of the modules $\ext^1_R(M,M)$ and $\ext^2_R(M,M)$ we can deduce that $\ext^2_R(M/x_1M,M)=0$ and whence $\ext^1_{R/x_1R}(M/x_1M,M/x_1M)=0$. Thus, again, by the same argument as above we obtain,
   
    $$\homm_{R/\big((x_1,x_2)R\big)}\Big(M/\big((x_1,x_2)M\big),M/\big((x_1,x_2)M\big)\Big)\cong R/\big((x_1,x_2)R\big).$$
   \end{rem}

   \begin{thm}\label{G-dim}
     Assume that $R$ has a canonical module $\omega_R$ and that $\text{Gdim}(\omega_R)<\infty$. Then $R$ is quasi-Gorenstien provided $\ext^i_R(\omega_R,\omega_R)=0$ for each $1\le i\le \dep(\omega_R)$.      
       \begin{proof}
         We induct on $\dime(R)$. In the case where $\dime(R)=0$ the statement follows from  Lemma \ref{ZeroDepthTotallyReflexive} and the Auslander-Bridger formula \cite[Theorem 1.25.]{BeyondTotallyReflexive}. So we assume that $\dime(R)>0$ and the statement has been proved for smaller values of $\dime(R)$. Applying the Auslander-Bridger formula \cite[Theorem 1.25.]{BeyondTotallyReflexive} we get, $$\dep(R_\map)=\dep(\omega_{R_\map})+\text{Gdim}(\omega_{R_\map})\ge \dep_R(\omega_{R_\map})\ge \min\{2,\dime(R_\map)\},$$ for each $\map\in \Supp_R(\omega_R)$. In particular, by virtue of \cite[Proposition 1.2]{Aoyama2} we can conclude that $R$ is $S_2$ and thence $\homm_R(\omega_R,\omega_R)\cong R$.
         
         Let, $\dep(\omega_R)=2$ (respectively, $\dep(\omega_R)=1$). Since $\dep(R)\ge \dep(\omega_R)$ so there exists a regular sequence $\mathbf{x}:=x_1,x_2$ (resp. $\mathbf{x}:=x_1$) on $R$. By \cite[1.10]{Aoyama}, $\mathbf{x}$ is also a regular sequence on $\omega_R$. The above remark in conjunction with our hypothesis implies that $$\homm_{R/(\mathbf{x})}\big(\omega_{R}/\mathbf{x}\omega_R,\omega_{R}/\mathbf{x}\omega_R\big)\cong R/(\mathbf{x}).$$

             So, 
          \begin{align*}\mam/(\mathbf{x})\in \As_{R/\mathbf{x}R}(\omega_R/\mathbf{x}\omega_R)&=\As_{R/\mathbf{x}R}(\omega_R/\mathbf{x}\omega_R)\bigcap \Supp_{R/\mathbf{x}}(\omega_R/\mathbf{x}\omega_R)&\\&=\text{Ass}_{R/\mathbf{x}R}\big((\homm_{R/(\mathbf{x})R}(\omega_R/\mathbf{x}\omega_R,\omega_R/\mathbf{x}\omega_R)\big)&\\&=\As_{R/\mathbf{x}R}\big(R/\mathbf{x}R\big).
          \end{align*}   
            Consequently, $\dep\big(R/\mathbf{x}R\big)=0$. Thus, Lemma \ref{ZeroDepthTotallyReflexive} together with \cite[Corollary 1.4.6]{Christensen}, implies that $\omega_R/\mathbf{x}\omega_R\cong R/(\mathbf{x})$. Thus, $\mu(\omega_R)=\mu(\omega_R/\mathbf{x}\omega_R)=1$. Also by \cite[Lemma 1.1]{Aoyama2} together with (\ref{AnnihilatorCanonicalModule}) in Reminder \ref{PreliminariesCanonical}, we have $0:_R\omega_R=0$. Hence $\omega_R\cong R$.
            
             Now we deal with the case where $\dep(\omega_R)\ge 3$.  We may assume that $R$ is complete. In this case, Lemma \ref{AttachPrimeDepthCanonicalModuleLemma}(ii) implies that $\mam\notin \text{Att}\big(\text{H}^{d-1}_\mam(R)\big)$. Let, $x\in R\backslash \big(\text{Z}(R)\bigcup (\unil_{\map\in \text{Att}\big(\text{H}^{d-1}_\mam(R)\big)}\map)\big)$.   Then the exact sequence (\ref{SecondExactSequence}) in Remark \ref{FirstRemark} shows that $\omega_R/x\omega_R$ is the canonical module of $R/xR$.  Thus using \cite[Corollary 1.4.6]{Christensen} we can conclude that  $\text{Gdim}_{R/xR}(\omega_{R/xR})<\infty$.  Our hypothesis in conjunction with \cite[page 140, Lemma 2.(i)]{Matsumura} implies that $\ext^i_{R/xR}(\omega_{R/xR},\omega_{R/xR})=0$  for each $1\le i\le \dep_{R/xR}(\omega_{R/xR})$. Therefore  using the inductive assumption we can deduce that  $R/xR$ is quasi-Gorenstein. By the same token, $R/x^nR$ is quasi-Gorenstein for each $n\in \mn$. Thus, by virtue of Theorem \ref{QuasiGorensteinDeformation} we conclude that $R$ is quasi-Gorenstein.
       \end{proof}
   \end{thm}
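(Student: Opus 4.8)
The plan is to argue by induction on $d=\dime(R)$, after first reducing to the complete case (quasi-Gorensteinness, the existence and formation of $\omega_R$ under completion, finiteness of $\text{Gdim}_R(\omega_R)$, the vanishing of the relevant $\ext^i_R(\omega_R,\omega_R)$, and the value of $\dep(\omega_R)$ are all insensitive to replacing $R$ by $\widehat R$). When $d=0$ one has $\dep(\omega_R)=0$, hence $\text{Gdim}_R(\omega_R)=\dep(R)=0$ by the Auslander--Bridger formula \cite[Theorem 1.25]{BeyondTotallyReflexive}; since $R$ is trivially $S_2$ we get $\homm_R(\omega_R,\omega_R)\cong R$ by \cite[Proposition 1.2]{Aoyama2}, so the last assertion of Lemma \ref{ZeroDepthTotallyReflexive} forces $\omega_R\cong R$. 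Now assume $d>0$ and that the theorem is known in smaller dimension. The first step is to check that $R$ satisfies $S_2$: for each $\map\in\Supp_R(\omega_R)$ the Auslander--Bridger formula localized at $\map$ gives $\dep(R_\map)=\dep\big((\omega_R)_\map\big)+\text{Gdim}_{R_\map}\big((\omega_R)_\map\big)\ge\dep\big((\omega_R)_\map\big)\ge\min\{2,\dime(R_\map)\}$, using that $\omega_R$ is $S_2$ (Lemma \ref{CanonicalModuleAndModuleHeghtInequality}(iv)) and that $\dime(\omega_R)_\map=\dime R_\map$ on $\Supp(\omega_R)$ (Lemma \ref{CanonicalModuleAndModuleHeghtInequality}(ii),(iii)); then \cite[Proposition 1.2]{Aoyama2} yields $R\in S_2$ and hence $\homm_R(\omega_R,\omega_R)\cong R$. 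Being complete and $S_2$, $R$ is also unmixed, so $0:_R\omega_R=0$ by \cite[Lemma 1.1]{Aoyama2} and (\ref{AnnihilatorCanonicalModule}).

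Next I would split according to $t:=\dep(\omega_R)$ (note $t\ge 1$ since $\omega_R$ is $S_2$ of dimension $d\ge 1$). Suppose $t\le 2$. Choose an $R$-regular sequence $\mathbf x$ of length $t$, which is $\omega_R$-regular by \cite[1.10]{Aoyama}. The remark preceding the theorem, applied with $M=\omega_R$ (its hypotheses hold because $\homm_R(\omega_R,\omega_R)\cong R$ and $\ext^i_R(\omega_R,\omega_R)=0$ for $i\le t$), gives $\homm_{R/(\mathbf x)}\big(\omega_R/\mathbf x\omega_R,\omega_R/\mathbf x\omega_R\big)\cong R/(\mathbf x)$. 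Since $\mathbf x$ is a maximal $\omega_R$-regular sequence, $\dep_{R/(\mathbf x)}\big(\omega_R/\mathbf x\omega_R\big)=0$, so $\mam/(\mathbf x)\in\As_{R/(\mathbf x)}\big(\omega_R/\mathbf x\omega_R\big)$; using $\As\big(\homm(N,N)\big)=\As(N)$ and the above isomorphism we conclude $\mam/(\mathbf x)\in\As\big(R/(\mathbf x)\big)$, i.e. $\dep\big(R/(\mathbf x)\big)=0$. Now $\text{Gdim}_{R/(\mathbf x)}\big(\omega_R/\mathbf x\omega_R\big)<\infty$ by \cite[Corollary 1.4.6]{Christensen}, so it is $0$ by Auslander--Bridger, and Lemma \ref{ZeroDepthTotallyReflexive} gives $\omega_R/\mathbf x\omega_R\cong R/(\mathbf x)$; hence $\mu(\omega_R)=\mu(\omega_R/\mathbf x\omega_R)=1$, which together with $0:_R\omega_R=0$ yields $\omega_R\cong R$, so $R$ is quasi-Gorenstein.

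Finally suppose $t\ge 3$; then $d\ge 3$ and $\dep(\omega_R)\ne 2$, so $\mam\notin\At_R\big(\Ht^{d-1}_\mam(R)\big)$ by Lemma \ref{AttachPrimeDepthCanonicalModuleLemma}(ii). Pick $x\in\mam$ avoiding $\text{Z}(R)$ and every prime in the finite set $\At_R\big(\Ht^{d-1}_\mam(R)\big)$; then the cokernel $C$ in the exact sequence (\ref{SecondExactSequence}) (with $M=R$) vanishes, so $\omega_{R/xR}\cong\omega_R/x\omega_R$, and likewise with $x$ replaced by any power $x^n$. The ring $R/xR$ has $\dime(R/xR)=d-1$, it has canonical module $\omega_R/x\omega_R$ with $\text{Gdim}_{R/xR}(\omega_R/x\omega_R)<\infty$ (\cite[Corollary 1.4.6]{Christensen}, $x$ being $\omega_R$-regular), and $\ext^i_{R/xR}\big(\omega_{R/xR},\omega_{R/xR}\big)=0$ for $1\le i\le t-1=\dep_{R/xR}(\omega_{R/xR})$: this follows from the change-of-rings isomorphism $\ext^i_{R/xR}(\omega_R/x\omega_R,\omega_R/x\omega_R)\cong\ext^{i+1}_R(\omega_R/x\omega_R,\omega_R)$ (\cite[page 140, Lemma 2.(i)]{Matsumura}), the long exact sequence of $0\to\omega_R\overset{x}{\rightarrow}\omega_R\to\omega_R/x\omega_R\to 0$, and the hypothesis $\ext^j_R(\omega_R,\omega_R)=0$ for $j\le t$. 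Thus the inductive hypothesis applies to $R/x^nR$ for every $n\in\mn$, and Theorem \ref{QuasiGorensteinDeformation} concludes that $R$ is quasi-Gorenstein. The main obstacle I expect is precisely this last reduction: one must arrange that $\omega_R/x\omega_R$ is genuinely $\omega_{R/xR}$ (which is available only once one knows $\mam$ is not attached to $\Ht^{d-1}_\mam(R)$, i.e. that $\dep(\omega_R)\ge 3$), and one must track the index shift in the change-of-rings formula -- this is exactly why the $\ext$-vanishing must be demanded up to $\dep(\omega_R)$ rather than only up to $\dep(\omega_R)-1$ -- before the analytic-deformation theorem can be brought to bear.
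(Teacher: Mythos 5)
Your proposal is correct and follows essentially the same route as the paper's own proof: induction on $\dime(R)$, the Auslander--Bridger formula plus \cite[Proposition 1.2]{Aoyama2} to get the $S_2$-condition and $\homm_R(\omega_R,\omega_R)\cong R$, the case split at $\dep(\omega_R)\le 2$ versus $\dep(\omega_R)\ge 3$ handled via Lemma \ref{ZeroDepthTotallyReflexive} and Lemma \ref{AttachPrimeDepthCanonicalModuleLemma}(ii) respectively, and the final appeal to Theorem \ref{QuasiGorensteinDeformation}. Your explicit tracking of the index shift in the change-of-rings isomorphism (explaining why the $\ext$-vanishing must be assumed up to $\dep(\omega_R)$) is exactly the point the paper leaves implicit.
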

   
   \begin{rem}
      At this time we do not know whether  the finiteness of the $G$-dimension of $\omega_R$ implies that $\ext^1_R(\omega_R,\omega_R)=\ext^2_R(\omega_R,\omega_R)=0$.  Note that in light of  \cite[Theorem 3]{Aoyama3},  this is, indeed, the case when $\text{Projdim}_R(\omega_R)<\infty$. We stress that, by arguing as in the proof of  Theorem \ref{G-dim}, an affirmative answer to this question  shows that $R$ is quasi-Gorenstein precisely when $\text{Gdime}(\omega_R)<\infty$ (c.f.  the hypothesis of Theorem \ref{G-dim}).  Hence, it is perhaps worthwhile to propose the following questions.
   \end{rem}
   
   \begin{ques} \label{ExtQuestion} Suppose that (the local  ring) $R$ has a canonical module with finite $G$-dimension. Then, do we have $\ext^1_R(\omega_R,\omega_R)=\ext^2_R(\omega_R,\omega_R)=0$?
   \end{ques}
   
   \begin{ques} \label{GQuestion} Assume that (the local  ring) $R$ has a canonical module with finite $G$-dimension. Is $R$ a quasi-Gorenstein ring?
   \end{ques}
   
      We stress again that an affirmative answer to the former question would answer the latter question positively. 
 
   The following remark sheds some light on the question of vanishing of   $\ext^1_R(\omega_R,\omega_R)$ and $\ext^2_R(\omega_R,\omega_R)$ which is related to the  question \ref{GQuestion}.

   \begin{rem}  The following statement holds. 
      \begin{enumerate} 
      
        \item[(i)] Suppose that $R$ is a homomorphic image of a Gorenstein local ring $S$ with $\dime(R)=\dime(S)$. Then  we can invoke the Grothendieck spectral sequence, $$\ext^i_R\big(\omega_R,\ext^j_S(R,S)\big)\overset{i,j}{\Rightarrow}\ext^{i+j}_S(\omega_R,S),$$
       and \cite[Theorem 10.33 (Cohomology Five-Term Exact Sequence)]{Rotman} to obtain the exact sequence,
        \begin{center}
        \begin{equation}
          \label{ExactSequence4}
          \Ht^{d-2}_\mam(\omega_R)\rightarrow \ext^2_R(\omega_R,\omega_R)^\vee\rightarrow \Ht^{d-1}_\mam(R)\tenr \omega_R\rightarrow \Ht^{d-1}_\mam(\omega_R)\rightarrow \ext^1_R(\omega_R,\omega_R)^\vee\rightarrow 0.
        \end{equation}
      \end{center}
        \item[(ii)] It is, perhaps, worthwhile to give an example of a ring satisfying $S_2$ but $\ext^1_R(\omega_R,\omega_R)\neq 0$.  By virtue of \cite[Theorem (1.1)]{GotoBuchsbaum} there exists a $5$-dimensional  Quasi-Buchsbaum ring $R$ such that, $\begin{cases}\Ht^i_\mam(R)=0,& i\neq  2,3\\ \Ht^i_\mam(R)\cong R/\mam, & i=2,3\end{cases}$, and it is a homomorphic image of a Gorenstein ring $S$. In particular  since $R$ is Cohen-Macaulay at punctured spectrum and $\dep(R)=2$ so $R$ satisfies the $S_2$-condition. Hence by \cite[Remark 1.4]{Aoyama2} $\Ht^d_\mam(\omega_R)\cong \Et(R/\mam)$ and thence $R=\omega_{\omega_R}$. But it is not $S_3$. Thus in view of  Theorem \ref{AttPrimeSnLocalCohomology}(ii) in conjunction with the fact that  $R=\omega_{\omega_R}$ we can deduce that $\Ht^{4}_\mam(\omega_R)\neq 0$. Therefore the exact sequence (\ref{ExactSequence4}) yields $\ext^1_R(\omega_R,\omega_R)\neq 0$.
        \item[(iii)]  In spite of the argument of the previous part,  $\ext^1_R(\omega_R,\omega_R)=0$ whenever $R$ and the formal fibers of $R$ satisfy the $S_3$-condition.  Namely,  under this condition we can pass to the completion $\Rc$ of $R$ to use the exact sequence (\ref{ExactSequence4}). Since, $R=\omega_{\omega_R}$ so our hypothesis in conjunction with Theorem \ref{AttPrimeSnLocalCohomology}(ii) implies that $\Ht^{d-1}_\mam(\omega_R)=0$. Hence our claim follows from the exact sequence (\ref{ExactSequence4}).
        
        \item[(iv)]  Assume that $R$ and formal fibers of $R$ satisfies $S_4$, $\dep_R(\omega_R)\ge 3$  and $\ext^2_R(\omega_R,\omega_R)$ has finite length. We have $\ext^1_R(\omega_R,\omega_R)=0$ by the previous part. We claim, furthermore, that $\ext^2_R(\omega_R,\omega_R)=0$.  We assume, again, that $R$ is complete.   Since $R=\omega_{\omega_R}$ we can  use  Theorem \ref{AttPrimeSnLocalCohomology}(ii)   to conclude that $\Ht^{d-2}_\mam(\omega_R)=\Ht^{d-1}_\mam(\omega_R)=0$.  Therefore the exact sequence (\ref{ExactSequence4}) yields $$\ext^2_R(\omega_R,\omega_R)^\vee\cong \Ht^{d-1}_\mam(R)\tenr \omega_R.$$ In addition by Reminder \ref{PreliminariesCanonical}(iv) and \cite[Lemma 1.1]{Aoyama2} we have, $\Supp(\omega_R)=\ts(R)$. So,
        \begin{align*} 
        \At_R\big(\Ht^{d-1}_\mam(R)\tenr \omega_R\big)&=\As_R\Big(\big(\Ht^{d-1}_\mam(R)\tenr \omega_R\big)^\vee\Big)=\As_R\Big(\homm_R\big(\omega_R,\Ht^{d-1}_\mam(R)^\vee\big)\Big)&\\ &=\As_R\big(\Ht^{d-1}_\mam(R)^\vee\big)\ins\Supp(\omega_R)
        =\At_R\big(\Ht^{d-1}_\mam(R)\big).
        \end{align*}
        Hence in view of  the above identity and Lemma \ref{AttachPrimeDepthCanonicalModuleLemma}(ii) we have, $\mam\notin \At_R\big(\Ht^{d-1}_\mam(R)\tenr \omega_R\big)$. Now the fact that $\As_R\big(\ext^2_R(\omega_R,\omega_R)\big)\subseteq \{\mam\}$ implies that $\ext^2_R(\omega_R,\omega_R)=0$. 
        
        In fact, for the vanishing of the second cohomology module $\ext^2_R(\omega_R,\omega_R)$ we imposed several additional assumptions. Perhaps, this  shows  that the vanishing of $\ext^2_R(\omega_R,\omega_R)$ is considerably more complicated  than the vanishing of $\ext^1_R(\omega_R,\omega_R)$.
      \end{enumerate} 
   \end{rem}

   \section{Several Characterizations of Quasi-Gorenstein Rings}
   
     \begin{defi} 
     Let $M$ be an $R$-module and $\mathbf{x}:=x_1,\ldots,x_v$ be a sequence of elements of
$R$. Following \cite{Huneke}, by limit closure of $\mathbf{x}$ on $M$ we mean, $\bigcup\limits_{t\ge0}\big((x_{1}^{t+1},\ldots,x_{v}^{t+1})M:_{M}x_{1}^{t}\ldots x_{v}^{t}\big)$ and we will denote it by $\{\mathbf{x}\}^{\lim}_{M}$.
    \end{defi}          
    
    \begin{rem}\label{StartOfAllRemarks} Let $M$ be an $R$-module and $\mathbf{x}:=x_1,\ldots,x_v$ be a sequence of elements of $R$. The following statements hold.
        \item[(i)] It is easily verified that the submodule, $\{\mathbf{x}^i\}^{\lim}_M=\bigcup\limits_{t\ge0}\big((x_{1}^{it+i},\ldots,x_{v}^{it+i})M:_{M}x_{1}^{it}\ldots x_{v}^{it}\big),$ of $M$ coincides with  $\bigcup\limits_{t\ge0}\big((x_{1}^{i+t},\ldots,x_{v}^{i+t})M:_{M}x_{1}^{t}\ldots x_{v}^{t}\big)$ for each $i\in \mathbb{N}$.
        \item [(ii)] For each $i,j\in \mathbb{N}$ with $i\le j$, we denote the the multiplication map,  $$x^{j-i}_1\ldots x^{j-i}_v:M/(\mathbf{x}^{i})M\rightarrow M/(\mathbf{x}^{j})M,$$ by $\varphi_{i,j}$.
 So we have the direct system $\big(M/(\mathbf{x}^{i})M,\varphi_{i,j}\big)_{i,j\in\mathbb{N}}$ whose direct limit $\lim\limits_{\underset{i\in \mn}{\longrightarrow}}\big(M/(\mathbf{x}^i)M\big)$ is the top local cohomology  $\Ht^v_{(x_1,\ldots,x_v)}(M)$ (c.f. \cite[5.2.9. Theorem]{LocalCohomology}).
       \item[(iii)] By part (i),  the  kernel of the natural $R$-homomorphism $\lambda_i:M/(\mathbf{x}^i)M\rightarrow \lim\limits_{\underset{l\in \mn}{\longrightarrow}}M/(\mathbf{x}^l)M$, i.e. the module, $\bigg(\bigcup\limits_{t\ge0}\big((x_{1}^{i+t},\ldots,x_{v}^{i+t})M:_{M}x_{1}^{t}\ldots x_{v}^{t}\big)\bigg)/(\mathbf{x}^i)M$, is  the submodule $\{\mathbf{x}^i\}^{\lim}_M/(\mathbf{x}^i)M$ of $M/(\mathbf{x}^i)M$. In particular, we have the induced $R$-monomorphism, $$\tilde{\lambda_i}:M/\{\mathbf{x}^i\}^{\lim}_M\rightarrow \lim\limits_{\underset{l\in \mn}{\longrightarrow}}\big(M/(\mathbf{x}^l)M\big)\cong \Ht^v_{(x_1,\ldots,x_v)}(M).$$
       
       \item[(iv)] It is easily verified that, similar to part (ii), we have a direct system $\big(M/\{\mathbf{x}^i\}^{\lim}_M,\psi_{i,j}\big)_{i,j\in \mn}$ for which the $R$-homomorphism $\psi_{i,j}:M/\{\mathbf{x}^i\}^{\lim}_M\rightarrow M/\{\mathbf{x}^j\}^{\lim}_M$ is, again, the multiplication map by $x_1^{j-i}\ldots x_v^{j-i}$. Furthermore, for each $i,j\in \mn$ the following diagram is commutative,
              \[
        \xymatrix{
             &M/\{\mathbf{x}^i\}^{\lim}_M \ar[r]^{\tilde{\lambda_i}} \ar[dr]_{\psi_{i,j}} & \lim\limits_{\underset{l\in \mn}{\longrightarrow}}\big(M/(\mathbf{x}^l)M\big)\\
             & & M/\{\mathbf{x}^j\}^{\lim}_M      \ar[u]_{\tilde{\lambda_j}}   \\
             }
     \]
     The injectivity of $\tilde{\lambda_i}$  together with above commutative diagram implies that $\psi_{i,j}$ is an $R$-monomorphism for each $i,j\in \mn$. In addition, we have the natural isomorphism, $$\lim\limits_{\underset{i\in \mn}{\longrightarrow}}M/\{\mathbf{x}^i\}^{\lim}_M\rightarrow\Ht^v_{(x_1,\ldots,x_v)}(M)\cong \lim\limits_{\underset{l\in \mn}{\longrightarrow}}M/(\mathbf{x}^l)M.$$

          \item[(v)]  By \cite[3.2 Theorem]{O'Carroll} $\{\mathbf{x}\}^{\lim}_M=\mathbf{x}M$ provided $\mathbf{x}$ is a regular sequence on $M$\footnote{It is worth pointing out that Remark \ref{StartOfAllRemarks}(v) remains true if we drop both of the assumptions for the ring $R$ to be Noetherian and local.}.
          \item[(vi)] (\cite[Proposition 2.3.]{Marley}) Suppose that $M$ is finitely generated and $\{\mathbf{x}\}^{\lim}_M=\mathbf{x}M$. Then $x_1,\ldots,x_v$ is a regular sequence on $M$.
         
    \end{rem}

    \begin{rem} \label{SocLocalCohomologyMuCanonicalModule} Let $M$ be an $R$-module. Then,
    
    \item[(i)] \cite[Definition 1.2.18]{Herzog} We use the notation $\text{Soc}(M)$ to denote the socle of $M$, i.e. the $R$-module $\homm_R(R/\mam,M)\cong 0:_M\mam$ which is isomorphic to the sum of the simple submodules of $M$.  Furthermore the length of $M$  as $R$-module is denoted by $\lambda_R(M)$. 
    
    \item[(ii)]  Assume that $R$ has a canonical module $\omega_R$. Then,
     \begin{center}
       $\bigg(\text{Soc}\big(H_{\mam}^{d}(R)\big)\bigg)^{\vee}\cong H_{\mam}^{d}(R)^{\vee}\bigotimes_{R}(R/\mam)\cong(\omega_{R}\bigotimes_{R}\widehat{R})\bigotimes_{R}(R/\mam)\cong(\omega_{R}/\mam\omega_{R})$.
     \end{center}
    \item[(iii)] In view of (ii) we have,
      \begin{align*}
      \mu(\omega_R)=\text{Vdim}_{R/\mam}(\omega_R/\mam\omega_R)&=
      \text{Vdim}_{R/\mam}\Bigg(\bigg(\text{Soc}\big(\Ht^d_\mam(R)\big)\bigg)^\vee\Bigg)
       =\lambda_R\Bigg(\bigg(\text{Soc}\big(\Ht^d_\mam(R)\big)\bigg)^\vee\Bigg)&\\ &=
      \lambda_R\bigg(\text{Soc}\big(\Ht^d_\mam(R)\big)\bigg) =
      \text{Vdim}_{R/\mam}\bigg(\text{Soc}\big(\Ht^d_\mam(R)\big)\bigg).
      \end{align*}
   Consequently,
      \begin{center}
        $\mu(\omega_{\widehat{R}})=
        \text{Vdim}_{\widehat{R}/\mam\widehat{R}}
        \bigg(\homm_{\widehat{R}}\big(\widehat{R}/\mam\widehat{R},\Ht^d_{\mam\widehat{R}}(\widehat{R})\big)\bigg)=\text{Vdim}_{R/\mam}\bigg(\homm_{R}\big(R/\mam,\Ht^d_\mam(R)\big)\bigg)$.
      \end{center}
    \end{rem}

    \begin{lem} \label{InjectiveDimensionAndCompletion} Let $A$ be an Artinian $R$-module. It is well-know that since $A$ is an $\mam$-torsion $R$-module so $A$ has a natural $\widehat{R}$-module structure (see  \cite[10.2.9 Remark.]{LocalCohomology}). We have $\text{injdim}_R(A)=\text{injdim}_{\widehat{R}}(A)$.
      \begin{proof}
        Let $\mathcal{E}^\bullet$ be a minimal injective resolution of $A$. Recall that, as $A$ is an Artinian module,  $\mathcal{E}^i$ is a finite copy of $\text{E}_R(R/\mam)$ for each $i\ge 0$ and that we can endow both of $A$ and $\text{E}(R/\mam)$ with an $\widehat{R}$-module structure such that  $\text{E}_R(R/\mam)\cong \text{E}_{\widehat{R}}(\widehat{R}/\mam\widehat{R})$ and $\mathcal{E}^\bullet$ is a minimal $\widehat{R}$-injective resolution of $A$ too. Thus $\text{injdim}_R(A)= \text{injdim}_{\widehat{R}}(A)$.
      \end{proof}
    \end{lem}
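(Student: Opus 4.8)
The plan is to analyze a minimal injective resolution of $A$ over $R$ and argue that it is simultaneously a minimal injective resolution over $\widehat{R}$. First I would recall the structure of such a resolution: since $A$ is Artinian it is $\mam$-torsion, so the only Bass numbers that can be nonzero are the $\mu^i:=\dim_{R/\mam}\ext^i_R(R/\mam,A)$, and each of these is finite — computing $\ext^i_R(R/\mam,A)$ from a resolution of $R/\mam$ by finitely generated free modules exhibits it as a subquotient of a finite direct sum of copies of the Artinian module $A$, annihilated by $\mam$, hence of finite length. Thus a minimal injective resolution of $A$ over $R$ has the form $0\rightarrow A\rightarrow \mathcal{E}^0\rightarrow \mathcal{E}^1\rightarrow\cdots$ with $\mathcal{E}^i\cong \text{E}_R(R/\mam)^{\mu^i}$.

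Next I would invoke the two standard facts that make the transfer work. One: $\text{E}_R(R/\mam)$, equipped with its natural $\widehat{R}$-structure, is isomorphic to $\text{E}_{\widehat{R}}(\widehat{R}/\mam\widehat{R})$; in particular each $\mathcal{E}^i$ is an injective $\widehat{R}$-module, and symmetrically $\text{E}_{\widehat{R}}(\widehat{R}/\mam\widehat{R})$ is an injective $R$-module. Two: for an Artinian $R$-module $M$ one has $M\cong\homm_R(\widehat{R},M)$ with $\widehat{R}$-action compatible with the $R$-action, so that the lattice of $R$-submodules of $M$ coincides with the lattice of $\widehat{R}$-submodules. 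Granting these, the complex $0\rightarrow A\rightarrow \mathcal{E}^0\rightarrow\cdots$ remains exact over $\widehat{R}$ (exactness of a complex does not depend on the ring of scalars), each term is $\widehat{R}$-injective, and minimality — which says that $\text{im}(\mathcal{E}^{i-1}\rightarrow\mathcal{E}^i)$ is an essential submodule of $\mathcal{E}^i$ — is preserved, because essentiality is a statement purely about the submodule lattice, which is unchanged. Hence $\mathcal{E}^\bullet$ is also a minimal injective resolution of $A$ over $\widehat{R}$, and therefore $\text{injdim}_R(A)=\sup\{i:\mathcal{E}^i\neq 0\}=\text{injdim}_{\widehat{R}}(A)$.

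If one prefers to avoid the submodule-lattice comparison, I would instead prove the two inequalities separately. For $\text{injdim}_{\widehat{R}}(A)\le\text{injdim}_R(A)$: the minimal $R$-injective resolution above has $\widehat{R}$-injective terms and stays exact over $\widehat{R}$, so it is an (a priori possibly non-minimal) $\widehat{R}$-injective resolution of $A$ of the same length. For the reverse inequality: $A$ is also $\mam\widehat{R}$-torsion over $\widehat{R}$, so a minimal $\widehat{R}$-injective resolution of $A$ again has all terms isomorphic to direct sums of $\text{E}_{\widehat{R}}(\widehat{R}/\mam\widehat{R})\cong\text{E}_R(R/\mam)$, which is $R$-injective; restricting scalars then yields an $R$-injective resolution of $A$ of the same length, whence $\text{injdim}_R(A)\le\text{injdim}_{\widehat{R}}(A)$.

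The only genuinely delicate point is the claim that a minimal $R$-injective resolution remains minimal after extending scalars to $\widehat{R}$, i.e. that essentiality of submodules is insensitive to the passage from $R$ to $\widehat{R}$; this rests on the (standard) coincidence of $R$- and $\widehat{R}$-submodules of an Artinian $\mam$-torsion module, and on the identification $\text{E}_R(R/\mam)\cong\text{E}_{\widehat{R}}(\widehat{R}/\mam\widehat{R})$. As the two-inequalities variant shows, this subtlety can be sidestepped entirely, so I expect no real obstruction — the argument amounts to bookkeeping about where the injective hull $\text{E}_R(R/\mam)$ lives and about which complexes stay exact and injective under restriction and extension of scalars along $R\rightarrow\widehat{R}$.
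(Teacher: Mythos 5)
Your proposal is correct and follows essentially the same route as the paper: both take a minimal $R$-injective resolution of $A$, observe that each term is a finite direct sum of copies of $\text{E}_R(R/\mam)\cong\text{E}_{\widehat{R}}(\widehat{R}/\mam\widehat{R})$, and conclude that the same complex serves as a minimal $\widehat{R}$-injective resolution. You merely supply more justification than the paper does (finiteness of the Bass numbers, preservation of essentiality via the coincidence of $R$- and $\widehat{R}$-submodule lattices, and the optional two-inequalities variant), all of which is sound.
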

    
    \begin{defi} \cite{BeyondTotallyReflexive} An $R$-module $N$ is called Gorenstein injective (Gorenstein flat)  if there exists an exact complex $\mathbf{I}$ of injective $R$-modules (exact complex $\mathbf{F}$ of flat $R$-modules) such that $\text{Ker}(I^0 \rightarrow I^1)]\cong N$ ($\text{Coker}(F_1\rightarrow F_0)\cong N$) and such that $\homm_R(E,\mathbf{I})$ ($E\tenr \mathbf{F}$) is exact for every injective $R$-module $E$.
    \end{defi}
    
    \begin{defi} \cite{BeyondTotallyReflexive} An (augmented) Gorenstein injective (respectively, Gorenstein flat) resolution of $M$ is an exact sequence, $0\rightarrow M\rightarrow B^0\rightarrow B^1\rightarrow\cdots\rightarrow B^i\rightarrow\cdots$ (respectively, $\cdots\rightarrow F_i\rightarrow \cdots\rightarrow F_1\rightarrow F_0\rightarrow M\rightarrow 0$), where each $B^i$ (respectively, each $F_i$) is Gorenstein injective (respectively, Gorenstein flat). The Gorenstein injective (respectively, Gorenstein flat) dimension of $N$, denoted by $\text{Ginjdim}_RN$ (respectively, $\text{Gfdim}_RN$), is the least integer  $n\ge 0$ such that there exists a Gorenstein injective resolution of $N$ (respectively, Gorenstein flat resolution of $N$) such that $B^i=0$ (respectively, $F_i=0$) for each $i>n$ (If there does not exist such a resolution then we say that $\text{Ginjdim}_RN=\infty$ (respectively, $\text{Gfdim}_RN=\infty$) ).
    \end{defi}
    
      According to the  parts  (v) and (vi) of Remark \ref{StartOfAllRemarks} we can deduce that $R$ is Cohen-Macaulay if and only if the assertion $\{\mathbf{x}\}^{\lim}_R=\mathbf{x}R$ holds for some (equivalently for every) system of parameters $\mathbf{x}$ of $R$. Also recall that a Cohen-Macaulay ring is unmixed. Therefore the next theorem is a generalization of the fact that a Cohen-Macaulay ring  is Gorenstein if and only if each (some)
parameter ideal is irreducible.
    
     \label{Socdim_Local_Cohomology_Miu_Cananoical}

     \begin{thm} \label{characterization} The following statements are equivalent.
       \begin{enumerate}
         \item[(i)] $\text{injdim}\big(H^d_\mam(R)\big)<\infty$.
         \item[(ii)] $\text{Ginjdim}_R\big(\Ht^d_\mam(R)\big)<\infty$ and $\ext^i_{\Rc}(\omega_{\widehat{R}},\omega_{\widehat{R}})=0$, for each $1\le i\le \dep_{\Rc}(\omega_{\Rc})$.        
         \item[(iii)] $R$ is quasi-Gorenstein. 
         \item[(iv)] $\widehat{R}$ is unmixed and the top local          
cohomology  module of $R$ with respect to $\mathfrak{m}$ has one dimensional socle.         
         \item [(v)] $\widehat{R}$ is unmixed and $\{\mathbf{x}\}^{\lim}_R$ is irreducible
for every system of parameters $\mathbf{x}:= x_1,\ldots,x_d$ of $R$.
         \item [(vi)]  $\widehat{R}$ is unmixed and there exists a system of parameter $\mathbf{x}:= x_1,\ldots,x_d$ of $R$ such that $\{\mathbf{x}^n\}^{\lim}_R$ is irreducible for each $n\in \mathbb{N}$.
          \item[(vii)] $\widehat{R}$ is unmixed and $\{\mathbf{x}\}^{\lim}_R$ is an irreducible ideal for some system of parameters $\mathbf{x}$ contained in  a high enough power of $\mam$.

       \end{enumerate}
       \begin{proof}
       (i)$\Leftrightarrow$(iii): One implication is clear. For the reverse, recall that $\Ht^d_\mam(R)$  is an Artinian $R$-module and that $\Ht^d_\mam(R)\cong \Ht^d_{\mam\widehat{R}}(\widehat{R})$ with its induced $\widehat{R}$-module structure. So by Lemma \ref{InjectiveDimensionAndCompletion} we can assume that $R$ is  complete and whence $R$ has a canonical module $\omega_R$. But the finiteness of $\text{injdim}\big(\Ht^d_\mam(R)\big)$ implies that $\text{fd}_R(\omega_R)<\infty$. Therefore by \cite[Theorem 7.10.]{Matsumura} $\pd_R(\omega_R)=\text{fd}_R(\omega_R)<\infty$. Hence  $\omega_R$ is free by virtue of \cite[Theorem 3]{Aoyama3}. 
       
       (ii)$\Leftrightarrow$(iii): If $\text{Ginjdim}_R\big(\Ht^d_\mam(R)\big)<\infty$ then  \cite[Lemma  3.5.]{Sazeedeh} implies that $\text{Ginjdim}_{\Rc}\big(\Ht^d_{\mam\Rc}(\Rc)\big)<\infty$. Hence using
        \cite[Theorem 4.16.]{BeyondTotallyReflexive} and \cite[Proposition 4.24]{BeyondTotallyReflexive} we conclude that $\text{Gdim}_{\Rc}(\omega_{\Rc})<\infty$. Thus the statement follows from Theorem \ref{G-dim}. The reverse is  straightforward.
       
       (iii)$\Rightarrow$(iv): $\Rc$ is quasi-Gorenstein. So by Reminder \ref{PreliminariesCanonical}(iv) $\Rc$ is an unmixed ring. Since $\homm_R\big(R/\mam,\Ht^d_\mam(R)\big)\cong \homm_R\big(R/\mam,\text{E}(R/\mam)\big)\cong R/\mam$, we have $\text{Vdim}_{R/\mam}\Big(\text{Soc}\big(\Ht^d_\mam(R)\big)\Big)=1.$
       
       (iv)$\Rightarrow$(v): By Remark \ref{StartOfAllRemarks}(iii) there is an $R$-monomorphism $R/\{\mathbf{x}^i\}_{R}^{\lim}\rightarrow\lim\limits_{\underset{n\in\mathbb{N}}{\longrightarrow}}R/(\mathbf{x}^{n})\cong H_{\mathfrak{m}}^{d}(R)$  for each $i\in \mathbb{N}$. So  the fact that $\text{Vdim}_{R/\mam}\Big(\text{Soc}\big(H^d_\mathfrak{m}(R)\big)\Big)=1$ implies that either $R/\{\mathbf{x}^i\}^{\lim}_R$ is  zero or it has one dimensional socle. Let, $\{\mathbf{x}^i\}^{\lim}_R\neq R$. Then since $R/\{\mathbf{x}^i\}^{\lim}_R$ is an Artinian ring with $\text{Vdim}_{R/\mam}\big(\homm_R(R/\mam, R/\{\mathbf{x}^i\}^{\lim}_R)\big)=1$, by \cite[Theorem 18.1.]{Matsumura} $R/\{\mathbf{x}^i\}^{\lim}_R$ is a Gorenstein Artinian ring and consequently the zero ideal of $R/\{\mathbf{x}^i\}^{\lim}_R$ is irreducible, i.e. $\{\mathbf{x}^i\}^{\lim}_R$ is an irreducible ideal of $R$. Also if, $\{\mathbf{x}^i\}^{\lim}_R=R$\footnote{Here, it is worth to mention the following. The Hochster's Monomial Conjecture states that for each local ring $R$ and each system of parameters $\mathbf{y}$ of $R$ we have $\{\mathbf{y}\}^{\lim}_R\subsetneq R$. Hence in the case where either both of $R$ and $R/\mam$ have the same characteristic or $\dime(R)\le 3$, we already know that $\{\mathbf{x}\}^{\lim}_R\subsetneq R$. }, then $\{\mathbf{x}^i\}^{\lim}_R$ is an irreducible ideal of $R$.

       (v)$\Rightarrow$(vi): It is obvious.

       (vi)$\Rightarrow$(iii):       By our hypothesis, for sufficiently large $n$, $R/\{\mathbf{x}^n\}^{\lim}_R$ is a Gorenstein Artinian ring and consequently $R/\{\mathbf{x}^n\}^{\lim}_R$ has one-dimensional socle. On the other hand  $\text{Soc}\big(H^d_\mathfrak{m}(R)\big)$ is a  non-zero finitely generated $R$-module  and, $$\text{Soc}\big(H_{\mathfrak{m}}^{d}(R)\big)= \text{Soc}(\lim\limits_{\underset{n\in\mathbb{N}}{\longrightarrow}}R/\{\mathbf{x}^{n}\}_{R}^{\lim})\cong\lim\limits _{\underset{n\in\mathbb{N}}{\longrightarrow}}\text{Soc}(R/\{\mathbf{x}^{n}\}_{R}^{\lim}).$$ It turns out that,
           $\text{Soc}(R/\{\mathbf{x}^{n}\}_{R}^{\lim})\rightarrow \text{Soc}(H_{\mathfrak{m}}^{d}(R))$   is onto for some $m\in \mathbb{N}$ and each $n\ge m$ and thereby $\text{Vdim}_{R/\mam}\big(\text{H}^{d}_\mam(R)\big)=1$. Hence  $\omega_{\widehat{R}}$ is cyclic by Remark \ref{SocLocalCohomologyMuCanonicalModule}(iii). On the other hand our unmixedness hypothesis together with the identity (\ref{AnnihilatorCanonicalModule}) of Reminder \ref{PreliminariesCanonical} yields, $0:_{\Rc}\omega_{\Rc}=0$, i.e. $\omega_{\Rc}\cong \Rc$.

              (vii)$\Rightarrow$(iv):  Let  $\ell_d(R)\in \mn$ be as in \cite[Definition 2.4.]{Marley}.  Suppose that $\mathbf{x}$ is a system of parameters in $\mam^{\ell_d(R)}$ such that $\{\mathbf{x}\}^{\lim}_R$ is an irreducible ideal. Precisely as in \cite[Proposition 2.5.]{Marley},    \cite[Lemma 3.12]{Goto}     implies that the natural map $\text{Soc}\big(R/(\mathbf{x})\big)\rightarrow \text{Soc}\big(\lim\limits_{\underset{n\in \mathbb{N}}{\longrightarrow}}R/(\mathbf{x}^n)\big)$ is onto. By Remark \ref{StartOfAllRemarks}(iv),  there exists a natural isomorphism $       \lim\limits_{\underset{n\in \mathbb{N}}{\longrightarrow}}R/\{\mathbf{x}^n\}^{\lim}_R \rightarrow \lim\limits_{\underset{n\in \mathbb{N}}{\longrightarrow}}R/(\mathbf{x}^n)$ which yields the following commutative diagram,
            \begin{center}
            $\begin{CD}
              \text{Soc}\big(R/(\mathbf{x})\big) @>>> \text{Soc}\big(\lim\limits_{\underset{n\in \mathbb{N}}{\longrightarrow}}R/(\mathbf{x}^n)\big)\\
              @VVV @AA\cong A\\
              \text{Soc}(R/\{\mathbf{x}\}^{\lim}_R) @>>> \text{Soc}(\lim\limits_{\underset{n\in \mathbb{N}}{\longrightarrow}}R/\{\mathbf{x}^n\}^{\lim}_R).
            \end{CD}$
            \end{center}
            This implies that $\text{Soc}(R/\{\mathbf{x}\}^{\lim}_R)\rightarrow \text{Soc}\big(\lim\limits_{\underset{n\in \mathbb{N}}{\longrightarrow}}R/\{\mathbf{x}^n\}^{\lim}_R\big)\cong \text{Soc}\big(H^d_\mam(R)\big)$ is an epimorphism too and whence $\text{Socdim}\big(H^d_\mam(R)\big)=1$. 
            
            (v)$\Rightarrow$(vii): It is obvious.

       \end{proof}
    \end{thm}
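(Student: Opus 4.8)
The plan is to establish the chain of implications (i)$\Leftrightarrow$(iii), (ii)$\Leftrightarrow$(iii), (iii)$\Rightarrow$(iv)$\Rightarrow$(v)$\Rightarrow$(vi)$\Rightarrow$(iii), and finally (v)$\Rightarrow$(vii)$\Rightarrow$(iv), so that all seven statements become equivalent. First I would dispense with the "homological'' equivalences. For (i)$\Leftrightarrow$(iii): since $\Ht^d_\mam(R)$ is Artinian and carries a natural $\widehat{R}$-structure with $\Ht^d_\mam(R)\cong\Ht^d_{\mam\widehat{R}}(\widehat{R})$, Lemma \ref{InjectiveDimensionAndCompletion} reduces everything to the complete case, where $R$ has a canonical module $\omega_R$ and $\omega_R^\vee\cong\Ht^d_\mam(R)$; finiteness of $\text{injdim}\big(\Ht^d_\mam(R)\big)$ then forces $\text{fd}_R(\omega_R)<\infty$, hence $\text{Pd}_R(\omega_R)=\text{fd}_R(\omega_R)<\infty$ by \cite[Theorem 7.10]{Matsumura}, and Aoyama's \cite[Theorem 3]{Aoyama3} gives that $\omega_R$ is free, i.e. $R$ is quasi-Gorenstein. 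For (ii)$\Leftrightarrow$(iii): finiteness of $\text{Ginjdim}_R\big(\Ht^d_\mam(R)\big)$ descends to $\widehat{R}$ by \cite[Lemma 3.5]{Sazeedeh}, and the chain of identities/transfer results \cite[Theorem 4.16]{BeyondTotallyReflexive}, \cite[Proposition 4.24]{BeyondTotallyReflexive} converts this into $\text{Gdim}_{\widehat{R}}(\omega_{\widehat{R}})<\infty$; combined with the vanishing hypothesis $\ext^i_{\widehat{R}}(\omega_{\widehat{R}},\omega_{\widehat{R}})=0$ for $1\le i\le\dep_{\widehat{R}}(\omega_{\widehat{R}})$, Theorem \ref{G-dim} yields quasi-Gorensteinness; the converse is immediate.

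Next I would handle the combinatorial/limit-closure equivalences. For (iii)$\Rightarrow$(iv): $\widehat{R}$ quasi-Gorenstein is $S_2$ hence unmixed by Reminder \ref{PreliminariesCanonical}(iv), and $\homm_R\big(R/\mam,\Ht^d_\mam(R)\big)\cong\homm_R\big(R/\mam,\Et(R/\mam)\big)\cong R/\mam$ has vector-space dimension one, so the socle of $\Ht^d_\mam(R)$ is one-dimensional. For (iv)$\Rightarrow$(v): by Remark \ref{StartOfAllRemarks}(iii) there is an $R$-monomorphism $R/\{\mathbf{x}^i\}^{\lim}_R\hookrightarrow\lim\limits_{\underset{n}{\longrightarrow}}R/(\mathbf{x}^n)\cong\Ht^d_\mam(R)$, so $R/\{\mathbf{x}^i\}^{\lim}_R$ is either zero or has one-dimensional socle; in the latter case it is a Gorenstein Artinian ring by \cite[Theorem 18.1]{Matsumura}, whence its zero ideal, i.e. $\{\mathbf{x}^i\}^{\lim}_R$, is irreducible (and if it equals $R$ it is vacuously irreducible). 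The implication (v)$\Rightarrow$(vi) is trivial, and (v)$\Rightarrow$(vii) is likewise trivial once we recall $\ell_d(R)$ from \cite[Definition 2.4]{Marley}.

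The substantive steps are (vi)$\Rightarrow$(iii) and (vii)$\Rightarrow$(iv), and I expect the direct-limit/socle bookkeeping to be the main obstacle, since one must pass a surjectivity statement through a filtered colimit and identify the result with $\mu(\omega_{\widehat{R}})$. For (vi)$\Rightarrow$(iii): for $n\gg0$, $R/\{\mathbf{x}^n\}^{\lim}_R$ is Gorenstein Artinian, hence has one-dimensional socle; using $\text{Soc}\big(\Ht^d_\mam(R)\big)=\text{Soc}\big(\lim\limits_{\underset{n}{\longrightarrow}}R/\{\mathbf{x}^n\}^{\lim}_R\big)\cong\lim\limits_{\underset{n}{\longrightarrow}}\text{Soc}\big(R/\{\mathbf{x}^n\}^{\lim}_R\big)$ (filtered colimits commute with finite limits, and the transition maps $\psi_{i,j}$ are the injections of Remark \ref{StartOfAllRemarks}(iv)) together with the fact that $\text{Soc}\big(\Ht^d_\mam(R)\big)$ is finitely generated and nonzero, one deduces that the map $\text{Soc}\big(R/\{\mathbf{x}^n\}^{\lim}_R\big)\to\text{Soc}\big(\Ht^d_\mam(R)\big)$ is onto for large $n$, so $\text{Soc}\big(\Ht^d_\mam(R)\big)$ is one-dimensional; then Remark \ref{SocLocalCohomologyMuCanonicalModule}(iii) gives $\mu(\omega_{\widehat{R}})=1$, and the unmixedness hypothesis together with the annihilator formula (\ref{AnnihilatorCanonicalModule}) forces $0:_{\widehat{R}}\omega_{\widehat{R}}=0$, whence $\omega_{\widehat{R}}\cong\widehat{R}$, i.e. $\widehat{R}$ (and hence $R$) is quasi-Gorenstein. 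For (vii)$\Rightarrow$(iv): with $\mathbf{x}\subseteq\mam^{\ell_d(R)}$ and $\{\mathbf{x}\}^{\lim}_R$ irreducible, the argument of \cite[Proposition 2.5]{Marley} via \cite[Lemma 3.12]{Goto} shows the natural map $\text{Soc}\big(R/(\mathbf{x})\big)\to\text{Soc}\big(\lim\limits_{\underset{n}{\longrightarrow}}R/(\mathbf{x}^n)\big)$ is onto; feeding this through the commutative diagram relating the $R/(\mathbf{x}^n)$-system to the $R/\{\mathbf{x}^n\}^{\lim}_R$-system (Remark \ref{StartOfAllRemarks}(iv)) shows $\text{Soc}\big(R/\{\mathbf{x}\}^{\lim}_R\big)\to\text{Soc}\big(\Ht^d_\mam(R)\big)$ is onto, so $\text{Soc}\big(\Ht^d_\mam(R)\big)$ is one-dimensional, which is (iv). This closes the cycle.
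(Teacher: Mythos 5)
Your proposal follows the same cycle of implications as the paper's proof and invokes the same key ingredients at each step (the reduction to $\widehat{R}$ via Lemma \ref{InjectiveDimensionAndCompletion} and Aoyama's theorem for (i)$\Leftrightarrow$(iii), Sazeedeh's lemma plus Theorem \ref{G-dim} for (ii)$\Leftrightarrow$(iii), the monomorphisms of Remark \ref{StartOfAllRemarks}(iii)--(iv) and the colimit-of-socles argument for the limit-closure equivalences, and Marley's argument via \cite[Lemma 3.12]{Goto} for (vii)$\Rightarrow$(iv)). The argument is correct and essentially identical to the paper's.
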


     It is worth pointing out that the equivalence (iv)$\Leftrightarrow$(iii) in the preceding theorem may be considered as a counterpart to the well-known fact that $R$ is a Gorenstein ring if and only if $R$ is a  Cohen-Macaulay ring of type 1. Also the equivalence (vii)$\Leftrightarrow$(iii) is a generalization of the main result of \cite[Theorem 2.7.]{Marley}, i.e. the following: \footnote{This is mentioned in the introduction of \cite{Marley}}.

   \paragraph{Theorem: (\cite{Marley}}) There exists an integer $\ell$ such that $R$ is Gorenstein if and only if some parameter ideal contained in $\mathfrak{m}^\ell$ is irreducible.     
   
   \begin{rem}  Assume that $R$ has a canonical module $\omega_R$. By the identity (\ref{AnnihilatorCanonicalModule}) of Reminder \ref{PreliminariesCanonical}, $R$ is unmixed if and only if $0:_R\omega_R=0$ if and only if $0:_{\widehat{R}}(\omega_R\bigotimes_R \widehat{R})=0$ if and only if $\widehat{R}$ is unmixed. Hence we can replace $\widehat{R}$ with $R$  in  all parts of the Theorem \ref{characterization} provided that $R$ has a canonical module. However,  \cite[EXAMPLE 2.3]{Nishimura} gives us an  example of  a Noetherian local domain such that its completion  is not unmixed.
   \end{rem}  
   
   In the following  example we construct a ring $R$ such that $H^d_\mam(R)$    has a one dimensional socle but it is not quasi-Gorenstein.  This implies that  in Theorem \ref{characterization} the unmixedness condition of $\widehat{R}$ is necessary.
   
   \begin{exam} 
   Let $K$ be a field and consider the power series ring $K[[X,Y]]$. Then, $$R=K[[X,Y]]/(X^5Y^5),$$ is a Gorenstein ring. Let  $\maa=(X^4Y^3,X^3Y^4)/(X^5Y^5)$ be an ideal of $R$.  An easy verification shows that $0:_R\maa$ is a principal ideal generated by the image of $X^2Y^2$ in $R$ and that $\maa\subsetneq 0:_R(0:_R\maa)=(X^3Y^3)/(X^5Y^5)$. Note that $\hit_R(\maa)=0$. This implies that $0:_R\maa\cong \homm_R(R/\maa,R)$ is the canonical module of $R/\maa$ which is cyclic but $0:_{R/\maa}(0:_R\maa)=0:_R(0:_R\maa)/\maa\neq 0$. Therefore the canonical module of $R/\maa$ is not free and $R/\maa$ is not quasi-Gorenstein. However by dualizing an epimorphism $R\rightarrow \omega_R$ we get an embedding $\Ht^d_\mam(R)\rightarrow \Et(R/\mam)$. Therefore, $\text{Vdim}_{\Rc/\mam\Rc}\Big(\text{Soc}\big(\Ht^d_\mam(R)\big)\Big)=1,$ although $R$ is not quasi-Gorenstein.
   \end{exam}
   
   It is well-known  that  a Cohen-Macaulay ring $R$ is Gorenstein if and only if there exists an irreducible system of parameters for $R$. So, in accordance with  Theorem \ref{characterization}, perhaps it is natural to ask whether $R$ is quasi-Gorenstein provided $\widehat{R}$ is unmixed and there exist a system of parameters for $R$ whose limit closure is irreducible? The answer is negative.

      \begin{exam} Let $R=\mathbb{C}[[X,Y,Z,T]]/(XY,XT,ZY,ZT)$. We denote  the images of $X,Y,Z,T$ in $R$ by $x,y,z,t$, respectively. Then $R$ is a non-Cohen-Macaulay unmixed complete local ring of dimension $2$. One can verify that $x+y,z+t$ is a system of parameters for $R$ and that, $$\big((x+y)^2,(z+t)^2\big):(x+y)(z+t)=(x,y,z,t).$$ So $(x+y,z+t)^{\text{lim}}$  is the unique maximal ideal of $R$ and in particular it is irreducible. But $R$ is not quasi-Gorenstein as $R$ is a non-Cohen-Macaulay ring of dimension $2$.
      \end{exam}

  In the Theorem \ref{characterization} the quasi-Gorensteinness  is characterized with the aid of the limit closure of parameter ideals. We would like to digress momentarily to give another such application of the limit closure. The following proposition will be use in the proof of Theorem \ref{GCMThm}.
  
  \begin{prop}\label{GCMProp}
    Suppose that there exists $n\in\mn$ such that $\mam^n(\{\mathbf{x}\}^{\lim}_M/\mathbf{x}M)=0,$  for each s.o.p. $\mathbf{x}$ for $M$. Then, $\mam^n\Big(\big((x_1,\ldots,x_i)M:_{M}x_{i+1}\big)/(x_1,\ldots,x_i)M\Big)=0$, for each s.o.p. $\mathbf{x}$ for $M$ and for every $0\le i\le d'-1$.
      \begin{proof}
         Let, $m\in \big((x_1,\ldots,x_i)M:_{M}x_{i+1}\big)$, where $0\le i\le d'-1$. Then, $$x_{i+1}m\in (x_1,\ldots,x_i,x_{i+1}^{u+1},x_{i+2}^{u},\ldots,x_{d'}^{u})M,$$ for each $u\in \mn$. Therefore, $x_1\ldots x_{i}x_{i+1}^u\ldots x_{d'}^um\in (x_1^{2},\ldots,x_i^{2},x_{i+1}^{2u},\ldots,x_{d'}^{2u})M$  which yields $m\in \{x_1,\ldots,x_i,x_{i+1}^u,\ldots,x_{d'}^u\}^{\lim}_M$. Thus our hypothesis yields $\mam^nm\in (x_1,\ldots,x_i,x_{i+1}^u,\ldots,x_{d'}^u)M$ for each $u\in\mn$. Now the assertion follows from the Krull's intersection theorem.
      \end{proof}
  \end{prop}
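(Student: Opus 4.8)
The plan is to rewrite $\big((x_1,\ldots,x_i)M:_M x_{i+1}\big)/(x_1,\ldots,x_i)M$ in terms of the limit closures of a family of auxiliary systems of parameters, feed those into the hypothesis, and finish by taking an intersection and applying Krull's theorem.

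First I would fix a system of parameters $\mathbf{x}=x_1,\ldots,x_{d'}$ for $M$, an index $0\le i\le d'-1$, and an element $m\in (x_1,\ldots,x_i)M:_M x_{i+1}$. For each $u\in\mn$ set $\mathbf{y}^{(u)}:=x_1,\ldots,x_i,x_{i+1}^{u},\ldots,x_{d'}^{u}$. Replacing members of a system of parameters by powers does not change the radical of the ideal they generate modulo $0:_R M$, so each $\mathbf{y}^{(u)}$ is again a system of parameters for $M$, and the hypothesis may be applied to it, giving $\mam^{n}\big(\{\mathbf{y}^{(u)}\}^{\lim}_M/\mathbf{y}^{(u)}M\big)=0$.

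Next I would check that $m\in\{\mathbf{y}^{(u)}\}^{\lim}_M$ for every $u\ge 1$. Starting from $x_{i+1}m\in (x_1,\ldots,x_i)M$ and multiplying by $x_1\cdots x_i\,x_{i+1}^{u-1}x_{i+2}^{u}\cdots x_{d'}^{u}$, one lands $x_1\cdots x_i\,x_{i+1}^{u}\cdots x_{d'}^{u}\,m$ inside $(x_1^{2},\ldots,x_i^{2},x_{i+1}^{2u},\ldots,x_{d'}^{2u})M$, since every generator $x_j m'$ of $(x_1,\ldots,x_i)M$ picks up a second factor $x_j$ and hence sits in $x_j^{2}M$; this is exactly the $t=1$ instance of membership in $\{\mathbf{y}^{(u)}\}^{\lim}_M$ (with the members of $\mathbf{y}^{(u)}$ playing the roles of $x_1,\ldots,x_v$ in the definition of limit closure). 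Consequently $\mam^{n}m\subseteq \mathbf{y}^{(u)}M=(x_1,\ldots,x_i,x_{i+1}^{u},\ldots,x_{d'}^{u})M$ for every $u\ge 1$.

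Finally I would let $u$ vary. In the finitely generated module $N:=M/(x_1,\ldots,x_i)M$ one has $(x_{i+1}^{u},\ldots,x_{d'}^{u})N\subseteq \mam^{u}N$, so $\bigcap_{u\ge 1}(x_{i+1}^{u},\ldots,x_{d'}^{u})N=0$ by Krull's intersection theorem; pulling this back through the quotient map yields $\bigcap_{u\ge 1}(x_1,\ldots,x_i,x_{i+1}^{u},\ldots,x_{d'}^{u})M=(x_1,\ldots,x_i)M$. Combining with the previous step gives $\mam^{n}m\subseteq (x_1,\ldots,x_i)M$, which is the claim. I do not expect a genuine obstacle here: the only point that needs a little care is the bookkeeping in the middle step — confirming that $\mathbf{y}^{(u)}$ is still a system of parameters and that the displayed multiplication genuinely certifies $m\in\{\mathbf{y}^{(u)}\}^{\lim}_M$ — after which the result drops out of the hypothesis together with Krull's theorem.
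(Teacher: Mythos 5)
Your argument is correct and follows essentially the same route as the paper: exhibit $m$ as a $t=1$ element of the limit closure of the auxiliary system of parameters $x_1,\ldots,x_i,x_{i+1}^{u},\ldots,x_{d'}^{u}$, apply the hypothesis to that system, and intersect over $u$ using Krull's intersection theorem. Your version merely makes explicit two points the paper leaves implicit (that the modified sequence is still a system of parameters, and that the final intersection equals $(x_1,\ldots,x_i)M$), which is fine.
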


  \begin{thm} \label{GCMThm} The following statements hold.
    \begin{enumerate}
      \item[(i)] If $R$ is a generalized Cohen-Macaulay  ring, then there exists $n\in \mn_0$ such that, $\mam^n(\{\mathbf{x}\}^{\lim}_R/\mathbf{x}R)=0$, for each s.o.p. $\mathbf{x}$ for $R$\footnote{See  \cite[9.5.7 Exercise]{LocalCohomology} for the definition of Generalized Cohen-Macaulay rings.}. The reverse also holds whenever $R$ is a homomorphic image of a Cohen-Macaulay ring.
      \item[(ii)] $R$ is a Buchsbaum\footnote{See \cite[Page 14, Theorem 2 and definition]{Stuckrad} for the definition of Buchsbaum rings.} ring if and only if   $\mam(\{\mathbf{x}\}^{\lim}_R/\mathbf{x}R)=0$  for each s.o.p. $\mathbf{x}$ for $R$.
    \end{enumerate}
    \begin{proof} If there exists $n\in\mathbb{N}$ such that   $\mam^n\big(\{\mathbf{x}\}^{\lim}_R/\mathbf{x}R\big)=0$  for each s.o.p.  $\mathbf{x}$ for $R$ then by Proposition \ref{GCMProp}, $\mam^n\big((x_1,\ldots,x_i)R:_Rx_{i+1}\big)\subseteq (x_1,\ldots,x_i)R$, for each s.o.p $\mathbf{x}$ of $R$ and $0\le i\le d-1$. Thus $R$ is generalized Cohen-Macaulay by \cite[Page 260, Proposition 16]{Stuckrad}. By the same token  each s.o.p. for $R$ is a weak $\mam$-sequence of $R$, i.e. $R$ is a Buchsbaum ring, provided $n\le 1$.

       Conversely assume that $R$ is generalized Cohen-Macaulay. Let, $\mam^{\alpha_i}\Ht^i_\mam(R)=0$, for each $0\le i\le d-1$. Choose an integer $n\in \mathbb{N}_0$ satisfying, $n\ge \sum\limits_{i=0}^{d-1}\alpha_i$.
            We show that, $\mam^{(2^{d}-1)n}\{\mathbf{x}\}^{\lim}_R\subseteq \mathbf{x}R$, for each s.o.p. $\mathbf{x}$ for $R$. To this aim we induct on $d\ge 1$ (for the case where $d=0$ the assertion is obvious). If, $d=1$, then for every s.o.p. $x$ of $R$  there exists $t\in \mathbb{N}$ such that $\{x\}^{\lim}_R=x^{t+1}R:_Rx^t=(0:_Rx^t)+xR$. So by \cite[Corollary 8.1.3.(b)]{Herzog} we are done in this case. Suppose that the statement is true for smaller values of $d$. Let $\mathbf{x}$ be a s.o.p. for $R$ and  $r\in \{\mathbf{x}\}^{\lim}_R$, i.e. $x_1^j\ldots x_{d}^jr\in (x_1^{j+1},\ldots,x_d^{j+1})$ for some $j\in \mathbb{N}$. Then there exists $r^\prime\in R$ such that, $$x_1^j\ldots x_{d-1}^jr-x_dr^\prime \in (x_1^{j+1},\ldots,x_{d-1}^{j+1})R:_{R}x_d^j.$$ Thus,   \cite[Corollary 8.1.3.(b)]{Herzog} shows that,  $$\mam^n(x_1^j\ldots x_{d-1}^jr)\subseteq (x_1^{j+1},\ldots,x_{d-1}^{j+1},x_{d})R.$$ Therefore, $\overline{\mam}^n\overline{r}\subseteq \{\overline{x_1},\ldots,\overline{x_{d-1}}\}^{\lim}_{\overline{R}}$, where  the notation $\overline{\ }$ means modulo $x_{d}R$. We claim that, $2n\ge \sum\limits_{i=0}^{d-2}\beta_i$, where $\beta_i$ is the least integer satisfying $\overline{\mam}^{\beta_i}\subseteq 0:_{\overline{R}}\big(\Ht^i_{\overline{m}}(\overline{R})\big)$. But we defer the proof of the claim for a while to see what  it implies. Using our claim in conjunction with the inductive hypothesis we can deduce that $\overline{\mam}^{(2^{{d}-1}-1)2n}\overline{\mam}^n\overline{r}\subseteq (\overline{x}_1,\ldots,\overline{x}_{{d}-1})\overline{R}$. So, $\mam^{(2^{d}-1)n}r\subseteq (x_1,\ldots,x_{d})R$,  as desired.    
            
            Now, we prove our claim. By \cite[Corollary 8.1.3.]{Herzog} we have, $0:_Rx_d\subseteq \Gamma_{\mam}(R)$. Hence the exact sequence, $0\rightarrow 0:_Rx_d\rightarrow R\rightarrow R/(0:_Rx_d)\rightarrow 0,$ shows that $\Ht^i_{\mam}(R)\cong \Ht^i_{\mam}(R/0:_{R}x_d)$ for each $i\ge 1$. Therefore using the exact sequence, $0\rightarrow R/(0:_Rx_d)\overset{x_d}{\rightarrow}R\rightarrow \overline{R}\rightarrow 0,$ we can deduce that, $\overline{\mam}^{\alpha_i+\alpha_{i+1}}\Ht^i_{\overline{\mam}}(\overline{R})=0$ for each $0\le i\le d-2$, provided $\mam^{\alpha_i}\Ht^i_\mam(R)=0$ for each $0\le i\le d-1$. Since, $\sum\limits_{i=0}^{d-1}\alpha_i\le n$. so, $\sum\limits_{i=0}^{d-2}(\alpha_i+\alpha_{i+1})\le 2n$. This proves our claim.
       
       Furthermore if $R$ is a Buchsbaum ring then by \cite[Proposition 1.17]{Stuckrad} every s.o.p. for $R$ is both of an unconditioned strong $d$-sequence\footnote{See  \cite[Definition 1.2.]{FrobeniusTestExponent} for the definition of an unconditional strongly $d$-sequence.} and also a weak $\mam$-sequence. Hence by virtue of \cite[Theorem 3.6.(ii)]{FrobeniusTestExponent} we have, $\{\mathbf{x}\}^{\lim}_R=\sum\limits_{j=1}^d\big((x_1,\ldots,\widehat{x_j},\ldots,x_d):x_j\big)$. Since $\mathbf{x}$ is a weak $\mam$-sequence, so we have, $\mam\big((x_1,\ldots,\widehat{x_i},\ldots,x_d):x_i\big)\subseteq (x_1,\ldots,\widehat{x_i},\ldots,x_d)$. This, immediately gives us, $\mam\{\mathbf{x}\}^{\lim}_R\subseteq \mathbf{x}R$.
    \end{proof}
  \end{thm}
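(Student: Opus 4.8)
The plan is to handle the two parts separately, using Proposition \ref{GCMProp} as the bridge between the ``limit closure'' hypothesis and the classical colon-ideal conditions that characterize generalized Cohen--Macaulay and Buchsbaum rings. Note first that for both parts the ``if'' direction is essentially immediate from \ref{GCMProp}: if $\mam^n\big(\{\mathbf{x}\}^{\lim}_R/\mathbf{x}R\big)=0$ for every system of parameters $\mathbf{x}$ of $R$, then \ref{GCMProp} yields $\mam^n\big((x_1,\ldots,x_i)R:_R x_{i+1}\big)\subseteq(x_1,\ldots,x_i)R$ for every s.o.p. $\mathbf{x}$ and every $0\le i\le d-1$. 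Taking $n=1$ (the hypothesis of (ii)) this says precisely that every s.o.p. is a weak $\mam$-sequence, i.e. that $R$ is Buchsbaum by \cite[Page 14, Theorem 2 and definition]{Stuckrad}; for general $n$, and under the extra assumption that $R$ is a homomorphic image of a Cohen--Macaulay ring, \cite[Page 260, Proposition 16]{Stuckrad} gives that $R$ is generalized Cohen--Macaulay.

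For the ``only if'' direction of (ii) I would assume $R$ is Buchsbaum and invoke \cite[Proposition 1.17]{Stuckrad}, by which every s.o.p. $\mathbf{x}$ is at once an unconditioned strong $d$-sequence and a weak $\mam$-sequence. For an unconditioned strong $d$-sequence one has the explicit formula $\{\mathbf{x}\}^{\lim}_R=\Suml_{j=1}^d\big((x_1,\ldots,\widehat{x_j},\ldots,x_d)R:_R x_j\big)$ of \cite[Theorem 3.6.(ii)]{FrobeniusTestExponent}; since the weak $\mam$-sequence property annihilates each summand modulo $(x_1,\ldots,\widehat{x_j},\ldots,x_d)R\subseteq\mathbf{x}R$ after multiplication by $\mam$, one gets $\mam\{\mathbf{x}\}^{\lim}_R\subseteq\mathbf{x}R$, as wanted.

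The substance of the theorem, and the step I expect to be the main obstacle, is the ``only if'' part of (i): producing a single exponent $n$, \emph{independent of} $\mathbf{x}$, out of the finiteness of $\Ht^0_\mam(R),\ldots,\Ht^{d-1}_\mam(R)$. The plan is an induction on $d$: fix $\alpha_i$ with $\mam^{\alpha_i}\Ht^i_\mam(R)=0$, set $n\ge\Suml_{i=0}^{d-1}\alpha_i$, and prove the sharper statement $\mam^{(2^d-1)n}\{\mathbf{x}\}^{\lim}_R\subseteq\mathbf{x}R$ for every s.o.p. $\mathbf{x}$. For $d=1$ one writes $\{x\}^{\lim}_R=(0:_R x^t)+xR$ and uses $0:_R x^t\subseteq\Gamma_\mam(R)=\Ht^0_\mam(R)$ from \cite[Corollary 8.1.3]{Herzog}. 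For $d\ge 2$ and $r\in\{\mathbf{x}\}^{\lim}_R$ one peels off $x_d$: writing $x_1^j\cdots x_{d-1}^j r-x_d r'\in(x_1^{j+1},\ldots,x_{d-1}^{j+1})R:_R x_d^j$ for a suitable $j$ and applying the colon estimates of \cite[Corollary 8.1.3]{Herzog} gives $\mam^n(x_1^j\cdots x_{d-1}^j r)\subseteq(x_1^{j+1},\ldots,x_{d-1}^{j+1},x_d)R$, hence $\overline{\mam}^{\,n}\,\overline r\in\{\overline{x_1},\ldots,\overline{x_{d-1}}\}^{\lim}_{\overline R}$ with $\overline R=R/x_dR$.

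The key bookkeeping is then to control the analogous exponents for $\overline R$. Since $0:_R x_d\subseteq\Gamma_\mam(R)$ one has $\Ht^i_\mam(R)\cong\Ht^i_\mam(R/0:_R x_d)$ for $i\ge 1$, and the sequence $0\to R/(0:_R x_d)\overset{x_d}{\to}R\to\overline R\to 0$ forces $\overline{\mam}^{\,\alpha_i+\alpha_{i+1}}\Ht^i_{\overline{\mam}}(\overline R)=0$ for $0\le i\le d-2$, so the analogue of $n$ for $\overline R$ may be taken to be $2n$. Feeding this into the inductive hypothesis for $\overline R$ yields $\overline{\mam}^{\,(2^{d-1}-1)(2n)}\big(\overline{\mam}^{\,n}\,\overline r\big)\subseteq(\overline{x_1},\ldots,\overline{x_{d-1}})\overline R$, and $(2^{d-1}-1)(2n)+n=(2^d-1)n$, which closes the induction. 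The two delicate features, and the reason this is the hard part, are that the exponent must not depend on the chosen system of parameters — which is why one carries the invariants $\alpha_i$ rather than ad hoc numbers — and that passing from $R$ to $R/x_dR$ can double the relevant exponent, which is exactly what produces, yet also bounds, the $2^d$-type growth.
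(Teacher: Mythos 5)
Your proposal is correct and follows essentially the same route as the paper's own proof: the same use of Proposition \ref{GCMProp} together with \cite[Page 260, Proposition 16]{Stuckrad} for the ``if'' directions, the same $d$-sequence formula from \cite[Theorem 3.6.(ii)]{FrobeniusTestExponent} for the Buchsbaum ``only if'', and the same induction on $d$ with the exponent $(2^d-1)n$, the reduction modulo $x_dR$, and the doubling bound $\overline{\mam}^{\,\alpha_i+\alpha_{i+1}}\Ht^i_{\overline{\mam}}(\overline R)=0$. No substantive differences.
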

      \section*{Acknowledgement}
      We would like to express our deepest gratitude to  Raheleh Jafari for many fruitful discussions and helpful suggestions, most notably for Remark \ref{Jafari}. We are also grateful to Kamran Divaani-Aazar for  valuable comments.
      
    \begin{center}\section*{References}\end{center}
       \begin{biblist}
       \bib{Aoyama3}{article}{
         author={Y. Aoyama},
         title={On the depth and the projective dimension of the canonical module},
         journal={Japan. J. Math. (N.S.),},
         volume={6,},
         number={1},
         pages={61-66},
         year={1980},
       }
       \bib{Aoyama}{article}{
         author={Y. Aoyama},
         title={Some basic results on canonical modules},
         journal={J. Math. Kyoto Univ.,},
         volume={23,},
         number={1},
         pages={85-94},
         year={1983},
       }
       \bib{Aoyama2}{article}{
         author={Y. Aoyama and S. Goto},
         title={On the endomorphism ring of the canonical module},
         journal={J. Math. Kyoto Univ.,},
         volume={25-1,},
         pages={21-30},
         year={1985},
       } 

    \bib{Bass}{article}{
      Author = {H. {Bass}},
    Title = {{On the ubiquity of Gorenstein rings}},
    FJournal = {{Mathematische Zeitschrift}},
    Journal = {{Math. Z.,}},
    ISSN = {0025-5874; 1432-1823/e},
    Volume = {82,},
    Pages = {8--28},
    Year = {1963},
    }
    \bib {BertinAnneaux}{article}{
    	AUTHOR = {M. -J. Bertin},
    	TITLE = {Anneaux d'invariants d'anneaux de polynomes, en
    		caract\'eristique {$p$}},
    	JOURNAL = {C. R. Acad. Sci. Paris S\'er. A-B},
    	VOLUME = {264},
    	YEAR = {1967},
    	PAGES = {A653--A656},
    }
       \bib{LocalCohomology}{book}{
         author={M. P. Brodmann and  R. Y. Sharp},
         title={Local Cohomology: An Algebraic Introduction with Geometric Applications, second edition}
         publisher={Cambridge University Press}
         year={2013}
       }
       \bib{Herzog}{book}{
         author={W. Bruns and J. Herzog},
         title={Cohen-Macaulay rings},
         publisher={Cambridge University Press},
         Year={1993},
       }  
   \bib{Christensen}{book}{
      Author = {L. {W. Christensen}},
    Title = {{Gorenstein dimensions.}},
    ISBN = {3-540-41132-1/pbk},
    Pages = {viii + 204},
    Year = {2000},
    Publisher = {Berlin: Springer},
     }
       \bib{BeyondTotallyReflexive}{book}{
         Author = {L. W. {Christensen} and H-B\o rn {Foxby} and H. {Holm}},
    Title = {{Beyond totally reflexive modules and back. A survey on Gorenstein dimensions.}},
    BookTitle = {{Commutative algebra. Noetherian and non-Noetherian perspectives}},
    ISBN = {978-1-4419-6989-7/hbk; 978-1-4419-6990-3/ebook},
    Pages = {101--143},
    Year = {2011},
    Publisher = {New York, NY: Springer},
       }

    \bib{Dibaei}{article}{
         Author = {M. T. {Dibaei}},
    Title = {{A study of Cousin complexes through the dualizing complexes}},
    FJournal = {{Communications in Algebra}},
    Journal = {{Commun. Algebra,}},
    ISSN = {0092-7872; 1532-4125/e},
    Volume = {33,},
    Number = {1},
    Pages = {119--132},
    Year = {2005},
       }
       \bib{Jafari}{article}{
         Author = {M. T. {Dibaei} and R. {Jafari}},
    Title = {{Cohen-Macaulay loci of modules}},
    FJournal = {{Communications in Algebra,}},
    Journal = {{Commun. Algebra,}},
    ISSN = {0092-7872; 1532-4125/e},
    Volume = {39,},
    Number = {10},
    Pages = {3681--3697},
    Year = {2011},
       }
         \bib{Enochs}{article}{
         Author = {E. E. {Enochs} and O. M.G. {Jenda}},
    Title = {{Gorenstein injective and projective modules}},
    FJournal = {{Mathematische Zeitschrift}},
    Journal = {{Math. Z.}},
    ISSN = {0025-5874; 1432-1823/e},
    Volume = {220,},
    Number = {4},
    Pages = {611--633},
    Year = {1995},
       }
       \bib{Foxby}{article}{
          Author = {R. {Fossum,}  H-B {Foxby,}  P. {Griffith} and I. {Reiten}},
    Title = {{Minimal injective resolutions with applications to dualizing modules and Gorenstein modules,}},
    FJournal = {{Publications Math\'ematiques }},
    Journal = {{Publ. Math., Inst. Hautes \'Etud. Sci.,}},
    ISSN = {0073-8301; 1618-1913/e},
    Volume = {45,},
    Pages = {193--215},
    Year = {1975},
       }
     \bib{Fouli}{article}{
         author={L. Fouli and C. Huneke},
         title={What is a system of parameter?},
         journal={Proc. Am. Math. Soc.,},
         volume={139,},
         year={2011},
         pages={ 2681-2696},
       }
       \bib{Foxby2}{article}{
         Author = {H-B {Foxby}},
    Title = {{Isomorphisms between complexes with applications to the homological theory of modules}},
    FJournal = {{Mathematica Scandinavica}},
    Journal = {{Math. Scand.,}},
    ISSN = {0025-5521; 1903-1807/e},
    Volume = {40,},
    Pages = {5--19},
    Year = {1977},
       }

       \bib{GotoBuchsbaum}{article}{
         
         Author = {S. {Goto}},
    Title = {{A note on quasi-Buchsbaum rings,}},
    FJournal = {{Proc.  Am. Math. Soc.}},
    Journal = {{Proc. Am. Math. Soc.,}},
    Volume = {90,},
    Pages = {511--516},
    Year = {1984},
       }
   \bib {GotoOnTheAssociated}{article}{
   	AUTHOR = {S. Goto},
   	TITLE = {On the associated graded rings of parameter ideals in
   		{B}uchsbaum rings},
   	JOURNAL = {J. Algebra},
   	FJOURNAL = {Journal of Algebra},
   	VOLUME = {85},
   	YEAR = {1983},
   	NUMBER = {2},
   	PAGES = {490--534},
   }    
       \bib{Goto}{article}{
        author={S. Goto and H. Sakurai,}
        title={The equality $I^2=QI$ in Buchsbaum rings,}
        journal={Rend. Sem. Mat. Univ. Padova,}
        volume={110,},
        pages={25-56},
        year={2003}
       }
    \bib{GotoSegre}{article}{
      AUTHOR = {S. {Goto}  and  K. {Watanabe},},
      TITLE = {On graded rings. {I}},
      JOURNAL = {J. Math. Soc. Japan,},
      FJOURNAL = {Journal of the Mathematical Society of Japan},
      VOLUME = {30,},
      YEAR = {1978},
      NUMBER = {2},
      PAGES = {179--213},
    }   
    \bib{Hartshorne}{book}{
          Author = {R. {Hartshorne}},
    Title = {{Residues and duality. Appendix: Cohomologie \`a support propre et construction du foncteur $f\sp!$. par P. Deligne.}},
    Year = {1966},
    HowPublished = {{Lecture Notes in Mathematics. 20. Berlin-Heidelberg-New York: Springer-Verlag, 423 p. (1966).}},
       }
     \bib{Hassanzadeh}{article}{
         Author = {H. {Hassanzadeh Hafshejani,}  N. {Shirmohammadi} and H. {Zakeri}},
    Title = {{A note on quasi-Gorenstein rings}},
    FJournal = {{Archiv der Mathematik}},
    Journal = {{Arch. Math.,}},
    ISSN = {0003-889X; 1420-8938/e},
    Volume = {91,},
    Number = {4},
    Pages = {318--322},
    Year = {2008},

       }
         \bib{HeiznerUrlich}{article}{
      Author = {W. {Heinzer} and M. {Kim} and B. {Ulrich}},
      Title = {{The Cohen-Macaulay and Gorenstein properties of rings associated to filtrations}},
      FJournal = {{Communications in Algebra}},
      Journal = {{Commun. Algebra,}},
      ISSN = {0092-7872; 1532-4125/e},
      Volume = {39,},
      Number = {10},
      Pages = {3547--3580},
      Year = {2011},
    }  
       \bib{Hermann}{article}{
        author={M. Herrmann and N. Trung}
        title={Examples of Buchsbaum quasi-Gorenstein rings}
        journal={Proc.  Am. Math.  Soc.,}
        volume={117,},
        pages={619-625},
        year={1993}
      }
      \bib{CanonicalElements}{article}{
        Author = {M. {Hochster}},
        Title = {{Canonical elements in local cohomology modules and the direct summand conjecture}},
      FJournal = {{Journal of Algebra}},
    Journal = {{J. Algebra,}},
    ISSN = {0021-8693},
    Volume = {84,},
    Pages = {503--553},
    Year = {1983},
      }

      \bib{Huneke}{book}{
       author={C. Huneke},
       title={Tight closure, parameter ideals, and geometry, in Six Lectures on Commutative Algebra
(J. Elias, J.M. Giral, R.M. Mir´o-Roig, and S. Zarzuela, eds), Progress in Mathematics, vol. 166,
Birkhauser Verlag, Basel, 1998, 187-239.}       
      }

      \bib{FrobeniusTestExponent}{article}{
        Author = {C. {Huneke,}  M. {Katzman,}  R. Y. {Sharp} and Y. {Yao}},
    Title = {{Frobenius test exponents for parameter ideals in generalized Cohen-Macaulay local rings}},
    FJournal = {{Journal of Algebra}},
    Journal = {{J. Algebra,}},
    ISSN = {0021-8693},
    Volume = {305,},
    Number = {1},
    Pages = {516--539},
    Year = {2006},
      }

    \bib {IshiiQuasi-Gorenstein}{article}{
    	Author = {S. {Ishii}},
    	Title = {{Quasi-Gorenstein Fano 3-folds with isolated non-rational loci.}},
    	FJournal = {{Compositio Mathematica}},
    	Journal = {{Compos. Math.}},
    	ISSN = {0010-437X; 1570-5846/e},
    	Volume = {77},
    	Number = {3},
    	Pages = {335--341},
    	Year = {1991},
    }
     \bib{JohnsonUrlich}{article}{
        Author = {M. {Johnson} and B. {Ulrich}},
        Title = {{Serre's condition $R_{k}$ for associated graded rings}},
        FJournal = {{Proceedings of the American Mathematical Society}},
        Journal = {{Proc. Am. Math. Soc.,}},
        ISSN = {0002-9939; 1088-6826/e},
        Volume = {127,},
        Number = {9},
        Pages = {2619--2624},
        Year = {1999},
      }

      \bib {KunzAlmost}{article}{
      	Author = {E. {Kunz}},
      	Title = {{Almost complete intersections are not Gorenstein rings.}},
      	FJournal = {{Journal of Algebra}},
      	Journal = {{J. Algebra}},
      	ISSN = {0021-8693},
      	Volume = {28},
      	Pages = {111--115},
      	Year = {1974},
      }
      \bib{Marley}{article}{
        Author = {T. {Marley,}  M. W. {Rogers,}  H. {Sakurai}},
    Title = {{Gorenstein rings and irreducible parameter ideals}},
    FJournal = {{Proceedings of the American Mathematical Society}},
    Journal = {{Proc. Am. Math. Soc.,}},
    ISSN = {0002-9939; 1088-6826/e},
    Volume = {136,},
    Number = {1},
    Pages = {49--53},
    Year = {2008},
      }
      \bib{Matsumura}{book}{
       Author = {H. {Matsumura}},
    Title = {{Commutative ring theory. Transl. from the Japanese by M. Reid.}},
    Year = {1986},
    Language = {English},
    HowPublished = {{Cambridge Studies in Advanced Mathematics, 8. Cambridge etc.: Cambridge University Press. XIII, 320 p. {\L} 30.00; {\$} 49.50 (1986).}}
      }
      \bib{Miazaki}{article}{
         AUTHOR = {Miyazaki, C.},
         TITLE = {Graded {B}uchsbaum algebras and {S}egre products},
         JOURNAL = {Tokyo J. Math.,},
         FJOURNAL = {Tokyo Journal of Mathematics},
         VOLUME = {12,},
         YEAR = {1989},
         NUMBER = {1},
         PAGES = {1--20},
      }
      \bib{Nishimura}{article}{
        author={J. Nishimura}
        title={A few examples of local rings, I}
        journal={Kyoto Journal of Mathematics,}
        volume={52,}
        year={2012}
        pages={51–87}
      }
      \bib {OchiaiShimomotoBertini}{article}{
      	Author = {T. {Ochiai} and K. {Shimomoto}},
      	Title = {{Bertini theorem for normality on local rings in mixed characteristic (applications to characteristic ideals).}},
      	FJournal = {{Nagoya Mathematical Journal}},
      	Journal = {{Nagoya Math. J.}},
      	ISSN = {0027-7630; 2152-6842/e},
      	Volume = {218},
      	Pages = {125--173},
      	Year = {2015},
      }
      \bib{O'Carroll}{article}{
        author={L. O'Carroll}
        title={On the generalized fractions of Sharp and Zakeri}
        journal={J. London Math. Soc.,}
        volume={28 (2),}
        year={1983}
        pages={417-427}
      }
      
      \bib{Rotman}{book}{
        Author = {J. J. {Rotman}},
        Title = {{An introduction to homological algebra. 2nd ed.}},
        Edition = {2nd ed.},
        ISBN = {978-0-387-24527-0/pbk; 978-0-387-68324-9/ebook},
        Pages = {xiv + 709},
        Year = {2009},
        Publisher = {Berlin: Springer},
      }
      \bib{Sazeedeh}{article}{
         Author = {R. {Sazeedeh}},
    Title = {{Gorenstein injectivity of the section functor}},
    FJournal = {{Forum Mathematicum}},
    Journal = {{Forum Math.,}},
    ISSN = {0933-7741; 1435-5337/e},
    Volume = {22,},
    Number = {6},
    Pages = {1117--1127},
    Year = {2010},
      }
      \bib{Schenzel}{article}{
        Author = {P. {Schenzel}},
        Title = {{A note on almost complete intersections}},
        
        Journal = {{Seminar D. Eisenbud,  B. Singh and W. Vogel, Vol. 2, Teubner-Texte Math. 48, 49-54 (1982).}},
      }
      \bib{SchenzelLocalCohomology}{book}{
        Author = {Peter {Schenzel}},
         Title = {{On the use of local cohomology in algebra and geometry}},
        BookTitle = {{Six lectures on commutative algebra. Lectures presented at the summer school, Bellaterra, Spain, July 16--26, 1996}},
         ISBN = {3-7643-5951-X/hbk},
         Pages = {241--292},
         Year = {1998},
         Publisher = {Basel: Birkh\"auser},
      }
    \bib{Cousin}{article}{
         Author = {R.Y. {Sharp}},
          Title = {{The Cousin complex for a module over a commutative Noetherian ring}},
        FJournal = {{Mathematische Zeitschrift}},
        Journal = {{Math. Z.,}},
         ISSN = {0025-5874; 1432-1823/e},
        Volume = {112,},
        Pages = {340--356},
        Year = {1969},
      }
    \bib {SinghCyclic}{article}{
        Author = {Anurag K. {Singh}},
        Title = {{Cyclic covers of rings with rational singularities.}},
        FJournal = {{Transactions of the American Mathematical Society}},
        Journal = {{Trans. Am. Math. Soc.}},
        ISSN = {0002-9947; 1088-6850/e},
        Volume = {355},
        Number = {3},
        Pages = {1009--1024},
        Year = {2003},	 
    }
    \bib{Stuckrad}{book}{
      Author = {J. {St\"uckrad} and W. {Vogel}},
    Title = {{Buchsbaum rings and applications. An interaction between algebra, geometry and topology}},
    ISBN = {3-540-16844-3},
    Year = {1986},
    Language = {English},
    }
    
    \bib {TavanfarReduction}{article}{
    	Author={Ehsan Tavanfar},
    	Title={Reduction of certain homological conjectures to excellent unique factorization domains},   	 	
    	Journal= { arXiv:1607.00025v1 [math.AC]},
    }
    
    \bib{UlrichGorenstein}{article}{
    	AUTHOR = {B. {Ulrich}},
    	TITLE = {Gorenstein rings as specializations of unique factorization
    		domains},
    	JOURNAL = {J. Algebra},
    	FJOURNAL = {Journal of Algebra},
    	VOLUME = {86},
    	YEAR = {1984},
    	NUMBER = {1},
    	PAGES = {129--140},
    }

    \bib{Yoshizawa}{article}{
      Author = {T. {Yoshizawa}},
    Title = {{On Gorenstein injectivity of top local cohomology modules}},
    FJournal = {{Proceedings of the American Mathematical Society}},
    Journal = {{Proc. Am. Math. Soc.,}},
    ISSN = {0002-9939; 1088-6826/e},
    Volume = {140,},
    Number = {6},
    Pages = {1897--1907},
    Year = {2012},
    Publisher = {American Mathematical Society, Providence, RI}
    }

    \bib{Zargar}{article}{
       Author = {M. {Rahro Zargar} and H. {Zakeri}},
      Title={On injective and Gorenstein injective dimensions of local cohomology
modules},
      Journal={arXiv:1204.2394v1 [math.AC] 11},
      Year={2012}
    }

   \end{biblist}    
   
   \small \textsc{Ehsan Tavnafar, Department of Mathematics, Shahid Beheshti University, G.C., Tehran, Iran.}  \\
E-mail address: \href{mailto:tavanfar@ipm.ir}{tavanfar@ipm.ir} \\ 

\small \textsc{M. Tousi, Department of Mathematics, Shahid Beheshti University, G.C., Tehran, Iran.} \newline
E-mail address: \href{mailto:mtousi@ipm.ir}{mtousi@ipm.ir}

\end{document}